\newtheorem{lemma}{Lemma}
\newtheorem{thm}{Theorem}
\newtheorem*{thm*}{Theorem}
\newtheorem{prop}{Proposition}
\newtheorem{example}{Example}
\newtheorem{rmk}{Remark}\theoremstyle{remark}
\newtheorem{defn}{\bf Definition}
\newtheorem*{defn*}{\bf Definition}
\newcommand{\ol}[1]{\overline{#1}}
\newcommand{\etale}{\'{e}tale\xspace}
\newcommand{\acts}{\; \rotatebox[origin=c]{-90}{$\circlearrowright$} \;}
\newcommand{\irr}{\mathrm{irr}}
\newcommand{\genirr}{\mathrm{gen.irr}}
\newcommand{\A}{\mathbb{A}}
\newcommand{\cO}{\mathcal{O}}
\newcommand{\CC}{\mathbf{C}}
\newcommand{\GL}{\mathrm{GL}}
\newcommand{\SL}{\mathrm{SL}}
\newcommand{\Z}{\mathbb{Z}}
\newcommand{\ZZ}{\mathbb{Z}}
\newcommand{\Q}{\mathbb{Q}}
\newcommand{\Qbar}{\ol{\Q}}
\DeclareMathOperator{\tr}{\mathrm{tr}}
\newcommand{\ot}{\otimes}
\newcommand{\sm}{\setminus}
\newcommand{\red}{{\tt red}}
\newcommand{\embed}{\hookrightarrow}
\newcommand{\onto}{\twoheadrightarrow}
\newcommand{\Spec}[1]{\mathrm{Spec}\left( #1 \right)}
\newcommand{\gpHom}[2]{\mathrm{Hom}_{\mathrm{gp}}\left( #1, #2 \right)}
\author{Benjamin Church\textsuperscript{1}}
\address{\textsuperscript{1} Stanford University}
\email{bvchurch@stanford.edu}
\thanks{The first author was partially supported by an NSF GRFP fellowship under grant DGE-2146755.}
\author{Francisco García-Cortés\textsuperscript{2}}
\address{\textsuperscript{2} Universidad de Sevilla}
\email{fragarcor3@alum.us.es}
\thanks{The second author was partially supported by grants PID2020-117843GB-I00 and PID2020-114613GB-I00 funded by MICIU/AEI/10.13039/501100011033.}
\title[Character Varieties and Weak Integrality]{$\mathrm{SL}_2$-character varieties of $2$-generated groups and failure of weak integrality}
\date{\today}
\begin{document}

\begin{abstract} Let $\ell$ be a prime number, $k$ a positive integer and consider the group $\Gamma_{\ell^k} :=\langle a,b\ \vert\ a^{\ell^k(\ell^k-1)}ba^{-\ell^k}b^{-2}\rangle$. We prove that $\Gamma_{\ell^k}$ is not $\SL_2$-weakly integral with obstruction at exactly the prime $\ell$. We also give a general description of the character varieties of $2$-generated groups with a relation of the form $a^{n_1} b^{m_1} a^{n_2} b^{m_2} = 1$.
\end{abstract}

\maketitle
\tableofcontents

\section{Introduction}

Recent work of Esnault, Esnault--Groechenig, and Esnault--de Jong \cite{Esnault23}, \cite{EG18}, \cite{EdJ24} has revealed obstructions to a finitely presented group $\pi$ being \textit{geometric} i.e. arising as the topological fundamental group of a smooth connected quasi-projective variety over $\CC$. Unlike previous Hodge-theoretic methods, these obstructions concern the arithmetic properties of the character variety of $\pi$ using deep results on the structure of arithmetic representations of the \etale fundamental groups of varieties in positive characteristic \cite{Dri12, Gai07}. Much of their work addresses the case of isolated points of the character variety i.e.\ those points corresponding to \textit{rigid} local systems. In a similar vein, \cite{EdJ24} defines a property of a group called \textit{weak integrality} that is global on the character variety:

\begin{defn}
Let $\pi$ be a finitely presented group. We say that $\pi$ is \textit{weakly integral} for a pair of positive integers $(r, \delta)$ \textit{at} a prime $\ell$ if there exists an absolutely irreducible representation
\[ \rho_\ell : \pi \to \GL_r(\ol{\Z}_\ell) \]
with $\det{\rho_{\ell}}$ having order dividing $\delta$ where $\ol{\Z}_{\ell}$ is the integral closure of $\Z_{\ell}$ inside $\Qbar_\ell$. Such a representation is necessarily valued in $\cO_{L}$ where $L / \Q_\ell$ is a finite extension. 
\end{defn}

\begin{defn}
We say that $\pi$ is \textit{weakly integral} if for any pair $(r, \delta)$ such that there exists an irreducible representation 
\[ \rho_{\CC} : \pi \to \GL_r(\CC) \]
with determinant of order dividing $\delta$ then $\pi$ is weakly integral for $(r, \delta)$ at $\ell$ for every prime $\ell$. 
\end{defn}

The main result of \cite{EdJ24} implies that any geometric $\pi$ is weakly integral. Furthermore, to demonstrate that this is indeed a new obstruction to a finitely presented group being geometric, they present an example due to  Becker--Breuillard--Varjú \cite{BreLetter,BBV22} of a $2$-generated group that is not weakly integral for $(r, \delta) = (2,1)$ at $\ell = 2$ but is weakly integral for all $\ell \neq 2$. This example is detailed in a letter of Breuillard kindly shared with us by Breuillard and Esnault \cite{BreLetter}; our calculations take inspiration from and often closely follow these ideas. Answering a question of Esnault \cite[Problem 3]{EsnPCMI}, we show that this example fits into a family of $2$-generated groups $\Gamma_{\ell^k}$ indexed by prime powers $\ell^k$ that are not weakly integral for $(2,1)$ exactly at the prime $\ell$. 

\begin{thm} \label{thm:gamma_ell}
Let $\Gamma_{\ell^k} = \big< a,b \mid a^{\ell^k ( \ell^k - 1)} b a^{-\ell^k} b^{-2} \big>$. Then the generically irreducible $\SL_2$-character variety $M^{\genirr}(\Gamma_{\ell^k}, \SL_2) \to \Spec{\Z}$ consists of exactly $2k$ ($k$ if $\ell = 2$) irreducible components each isomorphic to $\Spec{\Z[\zeta_{2\ell^{2k}}, \zeta_3, 1/\ell]}$ and hence admits $\ol{\Z}_{\ell'}$-points for all $\ell' \neq \ell$ and whose image is $\Spec{\Z[\ell^{-1}]} \subset \Spec{\Z}$. In particular, $\Gamma_{\ell^k}$ is weakly integral for $(2,1)$ at exactly all primes except $\ell$.
\end{thm}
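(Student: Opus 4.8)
The plan is to run the general analysis of the character varieties of relations $a^{n_1}b^{m_1}a^{n_2}b^{m_2}=1$ developed above in the special case $(n_1,m_1,n_2,m_2)=(\ell^k(\ell^k-1),1,-\ell^k,-2)$, and then to extract the arithmetic conclusion by locating where the coordinate ring of each component fails to be integral. Writing $A=\rho(a)$ and $B=\rho(b)$, the relation becomes $A^{N}BA^{-M}=B^{2}$ with $N=\ell^k(\ell^k-1)$ and $M=\ell^k$, so that $N+M=\ell^{2k}$ and $N-M=\ell^k(\ell^k-2)$. On the locus where $A$ is regular semisimple I would diagonalize $A=\mathrm{diag}(\lambda,\lambda^{-1})$ and read off the relation entrywise. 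The two off-diagonal equations show that for an irreducible representation (where $B$ cannot be triangular in the eigenbasis of $A$) one must have $\tr(B)=\lambda^{N+M}=\lambda^{-(N+M)}$, forcing $\lambda^{2\ell^{2k}}=1$ and $\tr(B)=\lambda^{\ell^{2k}}=\pm1$. This is the source of the two cyclotomic parameters: the eigenvalues of $A$ lie in $\mu_{2\ell^{2k}}$, producing $\zeta_{2\ell^{2k}}$, while $\tr(B)=\pm1$ forces $B^{3}=\mp I$, so that $B$ has order $3$ or $6$ and its eigenvalues generate $\Q(\zeta_3)$.

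Next I would solve the two diagonal equations. Using $\det B=1$ these pin down the diagonal entries of $B$ as explicit functions of $\lambda$ and $\tr(B)$, and the compatibility with $\tr(B)$ becomes automatic once $\tr(B)^2=1$; thus each admissible $\lambda$ determines a single point, cut out over $\Spec{\Z}$ by the cyclotomic condition $\lambda^{2\ell^{2k}}=1$ together with the open irreducibility condition $\tr(A^{2\ell^k})\neq 1,2$ (equivalently $\lambda^{2\ell^k}\neq1$). Feeding this back into the general structural description identifies the generically irreducible locus with a union indexed by the divisors $d\mid 2\ell^{2k}$ with $d\nmid 2\ell^k$, of which there are exactly $2k$ (and $k$ when $\ell=2$, where there is no separate odd part and the $2$-part of the order of $\lambda$ degenerates); this is the origin of the component count. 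I would then invoke that same description to exhibit each component as $\Spec{\Z[\zeta_{2\ell^{2k}},\zeta_3,1/\ell]}$, treating the precise identification of this ring—rather than the eigenvalue bookkeeping—as the technical heart.

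The arithmetic content is the factor $1/\ell$, which I would establish directly. The coordinate $z=\tr(AB)$ and the off-diagonal data of $B$ acquire denominators governed by $\tr(A^{2\ell^k})-2$ and $\tr(A^{2\ell^k})-1$; since $\lambda\in\mu_{2\ell^{2k}}$ while the relevant exponents are powers of $\ell$, these denominators are supported \emph{only} at $\ell$. Concretely, the traces of $\rho$ fail to be $\ell$-integral, so no $\SL_2(\ol{\Z}_\ell)$-conjugate of $\rho$ exists, which is exactly the failure of weak integrality at $\ell$; equivalently $\Spec{\Z[\zeta_{2\ell^{2k}},\zeta_3,1/\ell]}$ has no $\ol{\Z}_\ell$-point because $\ell$ is inverted. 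At every other prime $\ell'$ the eigenvalues of $A$ and $B$ are $\ell'$-adic units, the traces are $\ell'$-integral, and an absolutely irreducible representation over a local field with integral character stabilizes a lattice, hence admits an $\cO_L$-model with $L/\Q_{\ell'}$ finite; since the representation is valued in $\SL_2$ its determinant has order dividing $1$, so this produces the required $\ol{\Z}_{\ell'}$-point and shows the image in $\Spec{\Z}$ is $\Spec{\Z[1/\ell]}$.

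Combining these, $\Gamma_{\ell^k}$ is weakly integral for $(2,1)$ at every prime $\ell'\neq\ell$ and fails to be so at $\ell$, as asserted. I expect the main obstacle to be the integral bookkeeping rather than the eigenvalue analysis: one must show that the different and discriminant of the defining cyclotomic equations, together with the resultant controlling the irreducibility locus $\tr(A^{2\ell^k})\in\{1,2\}$, are supported \emph{exactly} at $\ell$, so that precisely $\ell$—and no auxiliary prime—is inverted in each component. The cases $\ell=2$ and $\ell=3$ will require separate attention, since there $\zeta_3$ interacts nontrivially with $\mu_{2\ell^{2k}}$ (for $\ell=3$ one has $\zeta_3\in\Q(\zeta_{2\ell^{2k}})$) and the relevant divisor count degenerates, which is what accounts for the modified count $k$ in place of $2k$.
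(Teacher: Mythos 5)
Your route is the direct conjugacy computation: diagonalize $A$ and compare entries of $A^N B A^{-M} = B^2$. This is essentially the Breuillard--Becker--Breuillard--Varj\'u-style argument that the paper deliberately replaces with the structure theory of $H[\Gamma_{\ell^k}]$ (expanding both sides of $A^{\ell^k(\ell^k-1)}B = B^2 A^{\ell^k}$ in the basis $1, A, B, AB$ over $\Z[x,y,z]$ and matching coefficients). Your version correctly recovers the paper's equations: the off-diagonal entries force $\tr B = \lambda^{N+M} = \lambda^{-(N+M)}$, hence $y = \pm 1$ and $A^{\ell^{2k}} = yI$; the diagonal entries determine $B$ up to torus conjugation with $p+s = y$ automatic (this checks out); and the component count via admissible orders $d \mid 2\ell^{2k}$, $d \nmid 2\ell^k$ gives exactly $2k$, resp.\ $k$ for $\ell = 2$, matching the paper. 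Your sourcing of $\zeta_{2\ell^{2k}}$ from the eigenvalues of $A$ and of $\zeta_3$ from $y = \pm 1 \Rightarrow B^3 = \mp I$ also matches the paper's intent. Two loose ends of the entrywise route: you never rule out irreducible solutions with $A$ non-semisimple ($\tr A = \pm 2$, $A \neq \pm I$); the paper's derivation handles this automatically since $A^{2\ell^{2k}} = I$ is impossible for a non-semisimple matrix in characteristic zero, whereas in your setup it needs a separate (short) Jordan-form computation. Also your irreducibility condition ``$\tr(A^{2\ell^k}) \neq 1, 2$'' contains a spurious ``$1$'': the correct condition is $\lambda^{2\ell^k} \neq 1$, i.e.\ $c_{\ell^k}(x) \neq 0$.

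The genuine gap sits at the crux of the theorem: non-integrality at $\ell$. You infer ``the traces of $\rho$ fail to be $\ell$-integral'' from ``these denominators are supported only at $\ell$,'' and that inference is a non sequitur. In $z = y\,\bigl(\lambda^{\ell^k+1}-\lambda^{-(\ell^k+1)}\bigr)/\bigl(\lambda^{\ell^k}-\lambda^{-\ell^k}\bigr)$ the \emph{numerator} is also a non-unit above $\ell$ (up to a unit it is $1 - \zeta$ for $\zeta$ of exact order $\ell^i$, since $\ell^k+1$ is prime to $\ell$), so a priori it could cancel the denominator and leave $z$ integral. What is needed, and what the paper's Proposition~\ref{prop:integrality_failure} supplies, is the strict valuation comparison
\[ \hat\nu_\ell(z) \;=\; \frac{1}{\phi(\ell^{i})} - \frac{1}{\phi(\ell^{i-k})} \;=\; \frac{1-\ell^k}{\ell^{i-1}(\ell-1)} \in (-1,0), \]
obtained by comparing norms of $1-(\text{primitive } \ell^{i}\text{-th root})$ against $1-(\text{primitive } \ell^{i-k}\text{-th root})$. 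Your proposed bookkeeping (``different and discriminant supported exactly at $\ell$'') would only show the component ring embeds in $\Z[\mu_\infty][1/\ell]$, not that $1/\ell$ is genuinely present, which is the whole content of the failure of weak integrality at $\ell$. Relatedly, the paper uses this same inequality $v_\ell(z) < 0$ (with $x, y$ integral and $\red$ monic in $z$) to verify $\red(x,y,z) \neq 0$, i.e.\ that the listed points really lie in $M^{\irr}$ and the classification closes; in your sketch the irreducibility of the constructed points (that $qr = ps - 1 \neq 0$ is achievable) is likewise left unverified.
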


Furthermore, we study the structure of generically irreducible\footnote{the components of the character variety whose generic point corresponds to an irreducible representation, see Definition~\ref{defn:irred_subschemes}} $\SL_2$-character varieties, denoted $M^{\genirr}(\pi, \SL_2)$, of $2$-generated groups presented by a relation of the form $a^{n_1} b^{m_1} a^{n_2} b^{m_2} = 1$. We characterize when they have components of specified dimension and when these components are integral at specified primes. This will follow from a decomposition of the character variety arising from ``universal relations'' developed in \S \ref{section:charvars} following from structure theory of the algebra $H[\pi]$ introduced in \S \ref{section:algebras}. Defining equations for $\SL_2$-character varieties were also studied in \cite{ABL17} and \cite{HP23} following a related strategy. For groups as above, our description reduces every equation cutting out the character variety to combinations of Chebyshev polynomials in such a way that their dependence on the exponents becomes comprehensible.

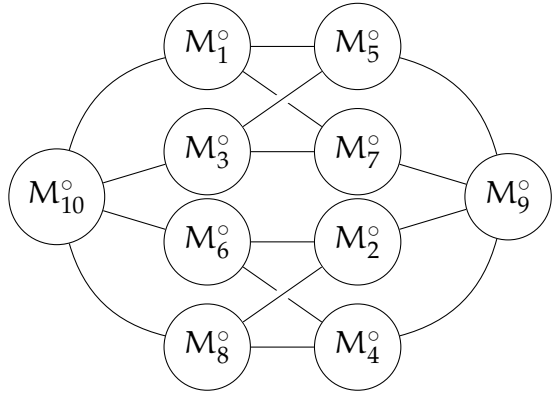
\begin{figure}[h!] \label{fig:intersection_components}
\begin{center}
\begin{tikzpicture}
\node[draw, circle, minimum size=15pt] (p10) at (0,0) {$M^\circ_{10}$};
\node[draw, circle, minimum size=15pt] (p1) at (2,2) {$M^\circ_1$};
\node[draw, circle, minimum size=15pt] (p3) at (2,.6) {$M^\circ_3$};
\node[draw, circle, minimum size=15pt] (p6) at (2,-.6) {$M^\circ_6$};
\node[draw, circle, minimum size=15pt] (p8) at (2,-2) {$M^\circ_8$};
\node[draw, circle, minimum size=15pt] (p5) at (4,2) {$M^\circ_5$};
\node[draw, circle, minimum size=15pt] (p7) at (4,.6) {$M^\circ_7$};
\node[draw, circle, minimum size=15pt] (p2) at (4,-.6) {$M^\circ_2$};
\node[draw, circle, minimum size=15pt] (p4) at (4,-2)  {$M^\circ_4$};
\node[draw, circle, minimum size=15pt] (p9) at (6,0) {$M^\circ_9$};
\draw (p10) to [bend left] (p1);
\draw (p10) to (p3);
\draw (p10) to (p6);
\draw (p10) to [bend right] (p8);
\draw (p1) to (p5);
\draw (p1) to (p7);
\draw [line width=5pt, white] (p3) to (p5);
\draw (p3) to (p5);
\draw (p3) to (p7);
\draw (p6) to (p2);
\draw (p6) to (p4);
\draw [line width=5pt, white] (p8) to (p2);
\draw (p8) to (p2);
\draw (p8) to (p4);
\draw (p5) to [bend left](p9);
\draw (p7) to (p9);
\draw (p2) to (p9);
\draw (p4) to [bend right] (p9);
\end{tikzpicture}
\end{center}
\caption{The maximum intersection graph (all intersection graphs are a subgraph of this one).}
\end{figure}

\begin{thm} \label{thm:components}
Let $\pi = \left< a,b \mid a^{n_1} b^{m_1} a^{n_2} b^{m_2} \right>$. There is a decompostion,
\[ M^{\genirr}(\pi, \SL_2) = \bigcup_{i = 1}^{10} M_i^{\circ} \]
into closed subschemes that are unions of ireducible components (some of which may be empty) with the following properties:
\begin{enumerate}
    \item each $M_i^\circ$ is reduced hence $M^{\genirr}(\pi, \SL_2)$ is reduced
    \item for $i = 1, \dots, 8$ suppose $M_i^{\circ} \neq \emptyset$ then $\dim{(M_i^{\circ})_{\Q}} \ge 1$ and $M_i^{\circ}(\Z[\mu_{\infty}]) \neq \emptyset$
    \item for $i = 9$ suppose $n_1 \neq n_2$ and $\dim{(M_i^{\circ})_{\Q}} > 0$ then $M_i^{\circ}(\ZZ[\mu_\infty]) \neq \emptyset$
    \item for $i = 10$ suppose $m_1 \neq m_2$ and $\dim{(M_i^{\circ})_{\Q}} > 0$ then $M_i^{\circ}(\ZZ[\mu_\infty]) \neq \emptyset$
\end{enumerate}
Hence non-integrality is only possible when all $M_i^\circ = \emptyset$ for $i = 1, \dots, 8$ and $\dim{(M_i^\circ)_{\Q}} = 0$ for $i = 9,10$. Moreover, the intersection graph between the $M_i^\circ$ is a subgraph of Figure~\ref{fig:intersection_components}. 
Furthermore, the dimensions of the $M_i^\circ$ are computed in \S \ref{section:dimensions}.
\end{thm}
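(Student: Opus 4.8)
The plan is to realize $M^{\genirr}(\pi, \SL_2)$ explicitly inside the character variety of the free group $F_2 = \langle a,b\rangle$, which by Fricke--Vogt (recovered from the structure theory of $H[\pi]$ in \S\ref{section:algebras}) is $\A^3$ with coordinates $x = \tr(a)$, $y = \tr(b)$, $z = \tr(ab)$. On the generically irreducible locus the matrices $I, \rho(a), \rho(b), \rho(ab)$ form a basis of $2\times 2$ matrices, so the relation $\rho(a^{n_1}b^{m_1}a^{n_2}b^{m_2}) = I$ is equivalent to the vanishing of the four basis-coordinates of $\rho(a^{n_1}b^{m_1}) - \rho(b^{-m_2}a^{-n_2})$. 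Writing $A^n = S_{n-1}(x)A - S_{n-2}(x)I$ with $S_n$ the Chebyshev polynomials ($S_n = xS_{n-1} - S_{n-2}$, $S_0 = 1$, $S_{-1} = 0$) and abbreviating $a_i = S_{n_i-1}(x)$, $a_i' = S_{n_i-2}(x)$, $b_i = S_{m_i-1}(y)$, $b_i' = S_{m_i-2}(y)$, I would expand both sides and use the identity $BA = yA + xB + (z-xy)I - AB$ to rewrite everything in the basis. This produces four defining equations: three bilinear ones involving only $x,y$, namely $a_1b_1 + a_2b_2 = 0$, $a_1b_1' + a_2b_2' = 0$, $a_1'b_1 + a_2'b_2 = 0$, together with one equation that is linear in $z$ with leading coefficient $a_1b_1$.

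The decomposition into the $M_i^\circ$ then comes from the determinantal structure of the three bilinear equations. Packaging the coefficients as $N = \begin{pmatrix} a_1 & a_2 \\ a_1' & a_2'\end{pmatrix}\begin{pmatrix} b_1 & b_1' \\ b_2 & b_2'\end{pmatrix}$, the three equations assert that three of the four entries of $N$ vanish, whence $\det N = \Delta_a\Delta_b = 0$ where $\Delta_a = a_1a_2' - a_2a_1'$ and $\Delta_b = b_1b_2' - b_1'b_2$. The crucial computation is the Chebyshev identity $\Delta_a = -S_{n_1-n_2-1}(x)$ and $\Delta_b = -S_{m_1-m_2-1}(y)$ (via the product-to-sum formula for $\sin$): thus $\Delta_a \equiv 0$ exactly when $n_1 = n_2$, and otherwise cuts out finitely many cyclotomic values $x = 2\cos(j\pi/(n_1-n_2)) = \zeta + \zeta^{-1}$. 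The system therefore splits into the locus where the first row of $N$ forces $(a_1,a_2) = 0$ (both $A$-powers scalar, $S_{n_1-1}(x) = S_{n_2-1}(x) = 0$), the symmetric locus $(b_1,b_2)=0$, and the ``parallel'' loci governed by $\Delta_a = 0$ or $\Delta_b = 0$. On each scalar branch the residual relation collapses to $\rho(b^{m_1+m_2}) = \pm I$ (resp.\ $\rho(a^{n_1+n_2}) = \pm I$), and the eigenvalue signs $\rho(a^{n_i}) = \epsilon_i I$ (forced to $\pm1$ by $\det = 1$) refine each branch; bookkeeping these signs is what produces the eight components $M_1,\dots,M_8$, while the two ``parallel''/expected-dimension pieces give $M_9$ and $M_{10}$.

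For $M_1,\dots,M_8$ I would observe that on a scalar branch the variable $x$ (or $y$) is pinned to a Chebyshev root --- a totally real cyclotomic integer $\zeta+\zeta^{-1} \in \Z[\mu_\infty]$ --- while the $z$-equation degenerates, its leading coefficient $a_1b_1$ vanishing and leaving $z$ free; hence $\dim(M_i^\circ)_\Q \ge 1$ and the whole line $\{x_0\}\times\{y_0\}\times\A^1_z$ supplies $\Z[\mu_\infty]$-points (taking the free coordinate in $\Z[\mu_\infty]$ and avoiding the reducible locus $x^2+y^2+z^2 - xyz = 4$). For $M_9$ (resp.\ $M_{10}$), under $n_1\neq n_2$ (resp.\ $m_1\neq m_2$) the governing factor $\Delta_a = -S_{n_1-n_2-1}(x)$ has only cyclotomic roots, so once such a component is positive-dimensional the surviving free coordinate again yields $\Z[\mu_\infty]$-points --- which is precisely why the hypotheses in parts (3)--(4) are $n_1\neq n_2$ and $m_1\neq m_2$. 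Reducedness of each $M_i^\circ$ follows because the cutting equations are products of distinct separable Chebyshev factors (the $S_N$ have simple roots) together with a single equation linear in $z$, so the defining ideal is radical on each piece.

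Finally, two of the $M_i^\circ$ can share a point only when their defining degeneracy conditions are simultaneously satisfiable --- e.g.\ a scalar $A$-branch and a scalar $B$-branch meet only at common special values of both $x$ and $y$ with compatible signs, while incompatible sign patterns never meet --- and I would read off the edges of Figure~\ref{fig:intersection_components} by checking these compatibilities case by case. The main obstacle is the second step: carrying out the primary decomposition cleanly, i.e.\ proving that the ideal of the four equations decomposes into exactly these ten radical pieces with no spurious embedded or extra components, and correctly matching the sign and $\gcd$ bookkeeping to the labels $M_1,\dots,M_{10}$ and to the graph. Controlling the overlaps between the scalar branches and the locus $\Delta_a = \Delta_b = 0$, where several conditions degenerate at once, is the delicate part, and is exactly where the structural input from $H[\pi]$ and the ``universal relations'' of \S\ref{section:charvars} does the real work.
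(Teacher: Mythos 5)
Your setup is the same as the paper's: expand the relation in the basis $1, a, b, ab$ of $H[F_2]$ over $\Z[x,y,z]$, obtain three bilinear equations in the $c/d$-values plus one equation linear in $z$ with leading coefficient $c_{n_1}(x)c_{m_1}(y)$, and your Chebyshev identity $\Delta_a = -c_{n_1-n_2}(x)$ is correct and is indeed what makes $M_9, M_{10}$ cyclotomic in $x$ (resp.\ $y$). Where you diverge is the decomposition step, and this is where the gap you yourself flag is real. The paper does \emph{not} decompose the specialized ideal in $\Z[x,y,z]$ by hand: it introduces formal variables $\gamma_{n_i}, \delta_{n_i}, \gamma_{m_i}, \delta_{m_i}$, adjoins the determinantal relations $\gamma^2 + x\gamma\delta + \delta^2 - 1$, and computes the primary decomposition of the resulting \emph{universal} ideal $I \subset R$ --- into exactly ten primes $\wp_1, \dots, \wp_{10}$, together with all pairwise intersections giving Figure~\ref{fig:intersection_components} --- once and for all by a M\textsc{AGMA} computation, independently of the exponents; the $M_i = V(\wp_i^{\tt e})$ and $M_i^\circ = M_i \cap M^{\genirr}(\pi,\SL_2)$ are then obtained by specialization. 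Your $\det N = \Delta_a \Delta_b = 0$ casework (proportionality of the vectors $(c_{n_i}, d_{n_i})$, signs forced to $\pm 1$ by the determinant relation) is a plausible by-hand substitute and does reproduce the ten strata set-theoretically over a field, but the theorem is a scheme-theoretic statement over $\Z$: that the pieces are reduced closed subschemes which are unions of irreducible components, with no extra or embedded components and controlled pairwise intersections. Upgrading your pointwise stratification to that statement is precisely the content of the primary decomposition, which you defer rather than supply; the delicate overlaps near $\Delta_a = \Delta_b = 0$ and $x,y = \pm 2$ (where the equivalence between $c_n(\tr A) = 0,\ d_n(\tr A)=s$ and $A^n = sI$ breaks, the paper's ``$=_{nt}$'' subtlety) are exactly where a hand argument must be checked.

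The more substantive error is in your treatment of parts (3)--(4). The paper's mechanism (Proposition~\ref{prop:end_components} together with Remark~\ref{rmk:constraining_z}) is: when both difference conditions are nonvacuous, $x$ \emph{and} $y$ are pinned to zero-dimensional cyclotomic loci, so a positive-dimensional component of $M_9^\circ$ can only arise from $z$ being unconstrained; since $z$ appears in exactly one generator with coefficient $\gamma_{n_1}\gamma_{m_1}$, such a component lies inside the locus $c_{n_1}(x)c_{m_1}(y) = 0$, hence inside $M_1 \cup \cdots \cup M_8$, and integrality follows from the already-proven part (2). Your argument --- ``$\Delta_a$ has only cyclotomic roots, so the surviving free coordinate yields $\Z[\mu_\infty]$-points'' --- never establishes this containment, and it fails outright when the surviving free coordinate is $y$: if $m_1 = m_2$ the condition $B^{m_1-m_2} =_{nt} I$ is vacuous, $y$ is unconstrained, and $z$ is the nonconstant rational function $-\big(c_{n_1+1}(x_0)d_{m_1}(y) + c_{m_1+1}(y)d_{n_1}(x_0)\big)/\big(c_{n_1}(x_0)c_{m_1}(y)\big)$, whose integrality at a chosen cyclotomic $y$ is not automatic --- this is the same denominator phenomenon that produces the non-integral points of Theorem~\ref{thm:integrality}. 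Relatedly, you have keyed the hypotheses to the wrong components: for $M_9$ the condition $n_1 \neq n_2$ is automatic whenever $M_9^\circ \neq \emptyset$ (since $A^0 =_{nt} -I$ is unsatisfiable), so it excludes nothing; the hypothesis that actually powers the paper's proof is $(\ast)_9$, namely $m_1 \neq m_2$ (resp.\ $n_1 \neq n_2$ for $M_{10}$), which rules out the free-$y$ case just described. As written, your sketch leaves the $m_1 = m_2$ case for $M_9$ (resp.\ $n_1 = n_2$ for $M_{10}$) unaddressed, which is exactly the case your mechanism cannot handle.
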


Keeping in mind Esnault and de Jong's theorem, it is particularly interesting to look for components of $M^{\genirr}(\pi, \SL_2)$ that do not dominate $\Spec{\Z}$ and moreover intersect the generic fiber inside $M^{\irr}(\pi, \SL_2)_{\Q}$. Our next result fully describes when such components appear. 

\begin{thm} \label{thm:integrality}
If $m_1 + m_2 = \pm 1$ or $n_1 + n_2 = \pm 1$ then $(M_i^\circ)_{\Q} = \emptyset$ for $i = 1, \dots, 8$ and $\dim{(M_i^\circ)_{\Q}} \le 0$ for $i = 9,10$. Let $s = n_1 - n_2$ and $t = m_1 - m_2$ and $\zeta_{2s}, \zeta_t \in \ol{\Q} \sm \{ \pm 1 \}$ be not necessarily primitive $2s$ and $t$-th roots of unity respectively. If moreover
\[\begin{aligned}z = &\frac{\zeta_{2s}^{n_1+1} - \zeta_{2s}^{-(n_1+1)}}{\zeta_{2s}^{n_1} - \zeta_{2s}^{-n_1}} \cdot \frac{\zeta_t^{m_1 - 1} - \zeta_t^{-(m_1 - 1)}}{\zeta_t^{m_1} - \zeta_t^{-m_1}}\\ &\quad\quad+ \frac{\zeta_t^{m_1 + 1} -\zeta_t^{-(m_1 + 1)}}{\zeta_t^{m_1} - \zeta_t^{-m_1}} \cdot \frac{\zeta_{2s}^{n_1-1} - \zeta_{2s}^{-(n_1 - 1)}}{\zeta_{2s}^{n_1} - \zeta_{2s}^{-n_1}}\end{aligned} \]
is not an algebraic integer then there is a closed immersion 
\[ \Spec{\Z[\zeta_{2s}, \zeta_t, z]} \embed M_{9}^\circ \]
such that the fiber over $\Q$ lies in $M^{\irr}(\pi, \SL_2)_{\Q}$. Furthermore $(M_9^{\circ})_{\Q}$ is covered by the union of these inclusions over all choices of $\zeta_{2s}$ and $\zeta_t$.
Hence $M_9^\circ$ has a $\ol{\Z}_{\ell}$-point over exactly those $\ell$ for which $v_{\ell}(z) \ge 0$ for some $\ell$-adic valuation and some $z$. Swapping the role of $n, m$, the same statement holds for $M_{10}^\circ$. Hence $\pi$ is $(2,1)$ weakly integral at exactly this set of $\ell$. 
\end{thm}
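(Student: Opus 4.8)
The plan is to pass to Fricke--Vogt trace coordinates $(x,y,z) = (\tr A, \tr B, \tr AB)$, in which the character variety of the free group is $\Spec{\Z[x,y,z]}$ and $M^{\genirr}(\pi,\SL_2)$ is a closed subscheme, and then to make explicit the equations cutting out $M_9^\circ$ from the universal-relation decomposition of Theorem~\ref{thm:components}. Throughout I write $S_k$ for the Chebyshev polynomial normalized by $S_k(\xi + \xi^{-1}) = (\xi^{k+1} - \xi^{-(k+1)})/(\xi - \xi^{-1})$, so that any $X \in \SL_2$ with $\tr X = u$ obeys the Cayley--Hamilton expansion $X^k = S_{k-1}(u)\,X - S_{k-2}(u)\,I$. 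The first assertion---that $m_1 + m_2 = \pm 1$ or $n_1 + n_2 = \pm 1$ forces $(M_i^\circ)_\Q = \emptyset$ for $i \le 8$ and $\dim (M_i^\circ)_\Q \le 0$ for $i = 9, 10$---I would read off directly from the dimension computations of \S\ref{section:dimensions}: these linear conditions on the exponents collapse precisely the ``two-sided'' strata, and they are exactly the numerics satisfied by $\Gamma_{\ell^k}$, where $m_1 + m_2 = -1$.

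The heart of the argument is the description of $M_9^\circ$ supplied by the decomposition: it is the branch on which $A^{n_1}$ and $A^{n_2}$ are proportional and $B^{m_1} = B^{m_2}$. The first condition is equivalent to $S_{s-1}(x) = 0$, i.e.\ $\alpha^{2s} = 1$ for the eigenvalue $\alpha$ of $A$, so that $x = \zeta_{2s} + \zeta_{2s}^{-1}$ with $\alpha^s = \epsilon_A \in \{\pm 1\}$ and hence $A^{n_1} = \epsilon_A A^{n_2}$; the second is equivalent to $\beta^t = 1$ for the eigenvalue $\beta$ of $B$, i.e.\ $y = \zeta_t + \zeta_t^{-1}$. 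Substituting these into the relator gives $\rho(w) = A^{n_1} B^{m_1} A^{n_2} B^{m_2} = \epsilon_A (A^{n_2} B^{m_1})^2$, so that $\rho(w) = I$ forces $\epsilon_A = -1$ together with $(A^{n_2} B^{m_1})^2 = -I$, and by Cayley--Hamilton the latter is equivalent to the single trace condition $\tr(A^{n_2} B^{m_1}) = 0$. The constraint $\zeta_{2s}, \zeta_t \neq \pm 1$ guarantees that $A$ and $B$ are regular semisimple and that the resulting representation is irreducible, so the fiber over $\Q$ lands in $M^{\irr}(\pi, \SL_2)_\Q$; this is also the source of the asymmetry between the $2s$ (which must supply the sign $\epsilon_A = -1$) and the $t$, and exchanging the two sides yields the mirror description of $M_{10}^\circ$.

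To obtain the closed form for $z$, I would expand $\tr(A^{n_2} B^{m_1}) = 0$ using the two Cayley--Hamilton expansions, reducing it to a linear equation in $z = \tr(AB)$ with coefficients polynomial in $x, y$; solving and simplifying with the three-term recursion $u\,S_{k-1}(u) = S_k(u) + S_{k-2}(u)$ collapses it to
\[ z = \frac{S_{n_2}(x)\,S_{m_1 - 2}(y) + S_{n_2 - 2}(x)\,S_{m_1}(y)}{S_{n_2 - 1}(x)\,S_{m_1 - 1}(y)}. \]
On the locus $S_{s-1}(x) = 0$ one has $S_{n_1 - j}(x) = \epsilon_A\,S_{n_2 - j}(x)$ for every $j$, so the common factor $\epsilon_A$ cancels and the displayed ratio equals the expression for $z$ in the statement, which is written with $n_1$. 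Reading off coordinates, each admissible pair $(\zeta_{2s}, \zeta_t)$ yields a single closed point of $M_9^\circ$ whose coordinate ring is the subring of $\ol{\Q}$ generated by $x$, $y$ and $z$; since adjoining the roots of unity themselves is an integral extension, this agrees, for the purpose of locating $\ol{\Z}_\ell$-points, with the ring $\Z[\zeta_{2s}, \zeta_t, z]$ of the statement, giving the asserted closed immersion. Conversely, any point of $(M_9^\circ)_\Q$ must satisfy the three defining equations, so its $x$- and $y$-coordinates are of the stated root-of-unity form and its $z$-coordinate is then determined; this proves the covering statement.

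Finally I would deduce the arithmetic. A $\ol{\Z}_\ell$-point of $\Spec{\Z[\zeta_{2s}, \zeta_t, z]}$ is a ring map to $\ol{\Z}_\ell$; the roots of unity are automatically sent into $\ol{\Z}_\ell$, so such a point exists if and only if $z$ lands in $\ol{\Z}_\ell$ under some embedding $\Q(\zeta_{2s}, \zeta_t) \hook \Qbar_\ell$, i.e.\ if and only if $v_\ell(z) \ge 0$ for some $\ell$-adic valuation. Hence $M_9^\circ$ (and symmetrically $M_{10}^\circ$) has a $\ol{\Z}_\ell$-point exactly for the $\ell$ described; combined with the first assertion, which removes $M_1^\circ, \dots, M_8^\circ$ over $\Q$, this shows that $M^{\irr}(\pi, \SL_2)$ acquires a $\ol{\Z}_\ell$-point precisely at those $\ell$, which is the definition of $(2,1)$ weak integrality. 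The main obstacle I anticipate is twofold: first, confirming that the universal-relation decomposition of Theorem~\ref{thm:components} cuts $M_9^\circ$ out by exactly these equations---as opposed to a smaller or larger subscheme---including the reducedness bookkeeping and the treatment of the degenerate sublocus where a denominator $S_{n_1-1}(x)$ or $S_{m_1-1}(y)$ vanishes; and second, the Chebyshev simplification producing the clean closed form for $z$, which must be carried out carefully enough to see that only $z$---never the automatically integral root-of-unity coordinates---can obstruct $\ell$-integrality.
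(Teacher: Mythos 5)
Your proposal follows the paper's route almost step for step, but it contains one genuine gap, and it sits at exactly the point the theorem's integrality hypothesis exists to handle. You justify irreducibility by asserting that ``the constraint $\zeta_{2s}, \zeta_t \neq \pm 1$ guarantees that $A$ and $B$ are regular semisimple and that the resulting representation is irreducible.'' This is false: two regular semisimple matrices can be simultaneously diagonal, so the triple $(x,y,z)$ you construct may be the character of a completely reducible representation. Irreducibility at a point of $\A^3_{x,y,z}$ is detected by $\red(x,y,z) \neq 0$ (Proposition~\ref{prop:red_irreducible}), which is a condition on $z$, not merely on $x,y$; and the paper's example for $\Gamma_2$ (where $M_{10}$ is nonempty of dimension $1$ but $M_{10}^\circ = \emptyset$) shows the failure really occurs: points produced by this recipe can all be reducible. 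The paper closes the gap with the hypothesis you never invoke for this purpose: if $z$ is not an algebraic integer then $\red(x,y,z) \neq 0$, because $\red$ is a monic quadratic in $z$ over $\Z[x,y]$ and $x,y \in \ol{\Z}$, so $\red(x,y,z) = 0$ would force $z \in \ol{\Z}$. Without this step your map is only shown to land in $M_9$, not $M_9^\circ$, and the containment of the $\Q$-fiber in $M^{\irr}(\pi, \SL_2)_{\Q}$ --- which the weak-integrality conclusion requires --- is unproven.

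Everything else agrees with the paper's proof, sometimes with harmless cosmetic variation: the emptiness and dimension claims are read off from \S\ref{section:dimensions} exactly as in the paper; the generators of $\wp_9^{\tt e}$ pin $x = \zeta_{2s} + \zeta_{2s}^{-1}$ (with $\zeta_{2s}^s = -1$, which your derivation of $\epsilon_A = -1$ from the relator correctly enforces) and $y = \zeta_t + \zeta_t^{-1}$; and the single $z$-linear trace relation yields the stated formula, your $n_2$-version agreeing with the paper's $n_1$-version since $A^{n_1} = -A^{n_2}$ flips numerator and denominator by the same sign. The degenerate sublocus you flag but leave open --- vanishing of the denominator $c_{n_1}(x)c_{m_1}(y)$ --- is in fact empty under the standing hypothesis and should be argued: on $V(\wp_9^{\tt e})$, $c_{n_1}(x) = 0$ forces $A^{n_1} =_{nt} \pm I$ (note $c_{n_1}(\pm 2) \neq 0$), and the trace relation then forces $B^{m_1+m_2} =_{nt} -I$; similarly $c_{m_1}(y) = 0$ forces $A^{n_1+n_2} =_{nt} I$; each of these contradicts either disjunct of the hypothesis (e.g.\ $B^{\pm 1} = -I$ gives $y = -2$, contradicting $c_t(y) = 0$, and $A^{\pm 1} = \pm I$ gives $x = \pm 2$, contradicting $c_s(x) = 0$). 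With the non-integrality argument inserted where your regular-semisimplicity claim stands, and this degenerate case disposed of, your proof coincides with the paper's.
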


\begin{rmk}
We will find that $M^{\genirr}(\Gamma_\ell, \SL_2)$ behaves rather differently for $\ell = 2$ and $\ell \neq 2$ with respect to this decomposition. For $\ell = 2$, it turns out that $M^{\genirr}(\Gamma_\ell, \SL_2) = M_9^\circ$ is irreducible (over $\Q$) and $M_{10}^\circ = \emptyset$. However, for $\ell \neq 2$ there is a decomposition $M^{\genirr}(\Gamma_\ell, \SL_2) = M_9^\circ \cup M_{10}^\circ$ both are irreducible (over $\Q$) of dimension $0$ and abstractly isomorphic. 
\end{rmk}

The proofs of these results are contained in \S\ref{section:dimensions}. Theorem~\ref{thm:gamma_ell} follows from our decomposition results (e.g.\ Theorem~\ref{thm:integrality}) but we give a self-contained proof in \S \ref{section:gamma_ell_reps} using only the structure theory of $H[\Gamma_{\ell^k}]$ as a replacement for the conjugacy arguments in \cite{BreLetter} that rely on the relation taking a particular form.

\subsection*{Acknowledgements} 

We thank H\'{e}l\`{e}ne Esnault for asking the question that lead to this work as well as invaluable advice and encouragement. We would also like to thank Emmanuel Breuillard for answering our questions and his helpful correspondence. The authors would like to thank the IAS Park City Mathematics Institute for organizing the summer 2024 session on motivic homotopy theory during which this project was completed. 

\subsection{Notations and conventions on character varieties}

Let $\pi$ be a finitely presented group and $G$ a reductive group over a ring $R$. The $G$-\textit{character variety} $M(\pi, G) \to \Spec{R}$ is the GIT quotient
\[ M(\pi, G) := \gpHom{\pi}{G} // G \]
where $G \acts \gpHom{\pi}{G}$ by conjugation and $\gpHom{\pi}{G}$ is the affine $A$-scheme representing the functor
\[ A / R \mapsto \{ \text{ group homomorphisms } \rho : \pi \to G(A) \} \]
Over a field $k$, we say that $\rho : \pi \to G(k)$ is \textit{irreducible} if $\rho(\pi)$ is not contained in any proper parabolic subgroup of $G$. Additionally, we say $\rho$ is \textit{completely reducible} or \textit{semi-simple} if for every parabolic $P \subset G$ containing $\rho(T)$, there is a Levi subgroup $L \subset P$ contaning $\rho(\pi)$. It is well-known (e.g. \cite[Thm.~30]{Sik09}) that $\rho \in \gpHom{\pi}{G}$ is poly-stable for the conjugation action if and only if $\rho$ is semi-simple and if $\rho$ is moreover irreducible then it is stable for the conjugation action.  
\par 
Note that we are interested in $G = \SL_2$ for which a representation $\rho : \pi \to \SL_2(k)$ is irreducible if and only if $\rho(\pi)$ is not contained in a Borel subgroup if and only if the corresponding representation $\rho : \pi \to \GL_2(k)$ is irreducible in the usual sense.

\begin{defn} \label{defn:irred_subschemes}
We consider two subschemes 
\[ M^{\irr}(\pi, G) \subset M^{\genirr}(\pi, G) \subset M(\pi, G) \] 
of the $G$-character variety defined over $\Spec{\Z}$, where $M^{\irr}(\pi, G)$ is the subscheme consisting of (everywhere/strongly) irreducible points and $M^{\genirr}(\pi, G)$ consists of those components whose generic point corresponds to an irreducible representation \textit{over} $\CC$. Explicitly, $M^{\irr}(\pi, G)$ is the GIT quotient of the subscheme of $\gpHom{\pi}{G}$ representing the subfunctor of those representations that are irreducible at each geometric point. Likewise, define $M^{\genirr}(\pi, G)$ as the scheme-theoretic image of 
\[ M^{\irr}(\pi, G)_{\Q} \to M(\pi, G). \]
\end{defn}

For an overview on $\mathrm{SL}_2(\CC)$-character varieties of finitely presented groups, the reader can consult \cite{ABL17}.

\subsection{$\SL_2$-character varieties of $2$-generated groups} \label{section:algebras}

The study of representations of $2$-generated groups begins with understanding the character variety of $F_2$, the free group on two generators. A classic result states:

\begin{thm}[Fricke--Vogt] \cite[Proposition 3.2]{brumfiel}
$M(F_2, \SL_2) \cong \A^3$ where the isomorphism is given by
\[ \Z[x,y,z] \to R = \Z[A_{ij}, B_{ij}]/(\det{A} - 1, \det{B} - 1) \]
sending $x \mapsto \tr{A}$ and $y \mapsto \tr{B}$ and $z \mapsto \tr{AB}$ where $R$ is the coordinate ring of $\gpHom{F_2}{\SL_2}$.
\end{thm}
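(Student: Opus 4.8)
The plan is to identify the character variety with the spectrum of an explicit invariant ring and then to compute that ring. Since $F_2$ is free on $a, b$, a homomorphism $\rho : F_2 \to \SL_2(S)$ is precisely the pair $(\rho(a), \rho(b)) \in \SL_2(S) \times \SL_2(S)$, so $\gpHom{F_2}{\SL_2} \cong \SL_2 \times \SL_2$ with coordinate ring $R = \Z[A_{ij}, B_{ij}]/(\det{A} - 1, \det{B} - 1)$, where $A = (A_{ij})$ and $B = (B_{ij})$ are the universal matrices. Under this identification the conjugation action becomes simultaneous conjugation $(A, B) \mapsto (g A g^{-1}, g B g^{-1})$, and by definition of the GIT quotient $M(F_2, \SL_2) = \Spec{R^{\SL_2}}$. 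Thus the theorem is equivalent to the assertion that $R^{\SL_2} = \Z[x, y, z]$ is a \emph{polynomial} ring on $x = \tr{A}$, $y = \tr{B}$, $z = \tr{(AB)}$.

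Two of the three ingredients are routine. Invariance of $x, y, z$ is immediate since the trace is unchanged under conjugation, giving $\Z[x,y,z] \subseteq R^{\SL_2}$. For algebraic independence I would exhibit, over an algebraically closed field, an explicit family realizing every triple of trace values: taking
\[ A = \begin{pmatrix} x_0 & -1 \\ 1 & 0 \end{pmatrix}, \qquad B = \begin{pmatrix} 0 & \beta \\ -\beta^{-1} & y_0 \end{pmatrix} \]
yields $\tr{A} = x_0$, $\tr{B} = y_0$, and $\tr{(AB)} = \beta + \beta^{-1}$, which attains any prescribed $z_0$ upon solving $\beta^2 - z_0 \beta + 1 = 0$. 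Hence the morphism $(A,B) \mapsto (\tr{A}, \tr{B}, \tr{(AB)})$ dominates $\A^3$, forcing $x, y, z$ to be algebraically independent over $\Z$, so that $\Z[x,y,z]$ is a genuine polynomial ring.

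The substance of the theorem is the reverse inclusion $R^{\SL_2} \subseteq \Z[x,y,z]$, namely that these three traces generate all invariants. I would establish this in two steps. First, invoke the first fundamental theorem of invariant theory for pairs of $2 \times 2$ matrices under simultaneous conjugation: every invariant is a polynomial in the traces $\tr{(w)}$ of words $w$ in $A$ and $B$. Second, reduce each such word-trace to a polynomial in $x, y, z$ using the Cayley--Hamilton relation in $\SL_2$. Since $\det = 1$, any $U \in \SL_2$ satisfies $U + U^{-1} = (\tr{U}) I$, whence $\tr{(U^{-1})} = \tr{U}$ and, multiplying by $V$ and taking traces, the fundamental identity
\[ \tr{(UV)} + \tr{(U V^{-1})} = \tr{U}\,\tr{V}. \]
A straightforward induction on word length then collapses every $\tr{(w)}$ into $\Z[x,y,z]$, yielding $R^{\SL_2} = \Z[x,y,z]$ and therefore $M(F_2, \SL_2) \cong \A^3$.

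The main obstacle is integrality: making both the first fundamental theorem and the trace reductions valid over $\Z$ rather than over a field of characteristic zero. The classical characteristic-zero proof (via polarization and linear reductivity of $\SL_2$) does not apply verbatim, and one must instead appeal to the characteristic-free theory of matrix invariants (Donkin, Zubkov) to know that traces of words still generate $R^{\SL_2}$ integrally and that no extra torsion relations intervene. Granting this, the outcome is exactly the polynomial ring above, which is smooth over $\Z$ and in particular reduced and flat, so the identification with $\A^3$ holds over all of $\Spec{\Z}$.
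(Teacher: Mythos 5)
Your proposal is correct in outline, but note that the paper does not prove this theorem at all: it quotes it from \cite{brumfiel}, so the right comparison is with the method of that cited source, which the paper's subsequent machinery (the algebra $H[\pi]$, its trace subalgebra $TH[\pi]$, and the basis $1,a,b,ab$) mirrors. Brumfiel--Hilden work entirely elementarily over $\Z$: they develop a trace calculus in $H[F_2]$, show it is free of rank $4$ over $TH[F_2]=\Z[x,y,z]$, and deduce by direct computation that the invariant ring is generated by $\tr{A},\tr{B},\tr{(AB)}$, with no appeal to reductivity or to a fundamental theorem of matrix invariants. Your route instead factors through general invariant theory: the first fundamental theorem for simultaneous conjugation, then the identity $\tr{(UV)}+\tr{(UV^{-1})}=\tr{U}\,\tr{V}$ together with Cayley--Hamilton to collapse word traces --- a valid and more conceptual path, and your easy steps (invariance; algebraic independence via the explicit family dominating $\A^3$, whose determinant and trace computations check out) are fine. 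You also correctly locate the crux at integrality, but be aware that your heaviest step is outsourced exactly there: Donkin's theorem concerns $\GL_n$ acting on tuples of full matrix algebras, so to conclude for $R^{\SL_2}$ with $R$ the coordinate ring of $\SL_2\times\SL_2$ you additionally need surjectivity of restriction of invariants from $M_2\times M_2$ to the closed subscheme $\SL_2\times\SL_2$, and the passage from $\GL_2$- to $\SL_2$- (i.e.\ $\mathrm{PGL}_2$-) invariants; in positive characteristic this is precisely where the good-filtration arguments of Donkin and Zubkov do real work, and your phrase ``no extra torsion relations intervene'' is a citation rather than an argument. Granting those references your proof is complete; the trade-off is conceptual generality on your side versus the self-contained elementary computation over $\Z$ in the source the paper cites.
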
 

This result is exactly the following algebraic statement: the ring of invariants $R^{\SL_2}$ under conjugation is generated by $\tr{A}, \tr{B}, \tr{AB}$ and these are algebraically independent.
\par 
Given a $2$-generated group $\pi$ we fix a set of two generators i.e. a surjection $F_2 \onto \pi$. This presents $M(\pi, \SL_2) \subset M(F_2, \SL_2) \cong \A^3_{x,y,z}$ as a subscheme. We will now understand the equations cutting out $M^{\genirr}(\pi, \SL_2)$ inside $\A^3_{x,y,z}$. 
\par 
Following \cite{brumfiel} we define an algebra $H[\pi]$ that encodes the Cayley--Hamilton relations universal to any $\SL_2$-representation:

\begin{defn}
Let $\pi$ be a finitely presented group. Let 
\[ H[\pi] := \ZZ[\pi]/(h(g + g^{-1}) - (g + g^{-1})h)_{g,h \in \pi} \]
\end{defn}

The relations of this algebra arise from the observation that if $\rho : \pi \to \SL_2(R)$ is a representation then by Cayley--Hamilton
\[ \rho(g)^{-1} = \tr{\rho(g)} \cdot I - \rho(g) \]
and therefore $\rho(g) + \rho(g)^{-1}$ is a central matrix so its commutator with any element lies in the kernel of $\tilde{\rho} : \ZZ[\pi] \to M_2(R)$. Due to this interpretation, we define a $\ZZ$-linear map $T : H[\pi] \to H[\pi]$ called the ``trace'' defined as $T(x) = x + \iota(x)$ where $\iota : H[\pi] \to H[\pi]$ is the $\ZZ$-linear extension of $g \mapsto g^{-1}$. The trace $T(g)$ is mapped to $\tr{\rho(g)} \cdot I$ under any $\ZZ$-algebra map $\tilde{\rho} : H[\pi] \to M_2(A)$ extending a representation $\rho : \pi \to \SL_2(R)$. Hence $\SL_2(R)$-representations of $\pi$ correspond to involution preserving algebra maps $H[\pi] \to M_2(R)$ (where $M_2(R)$ is given the involution $A \mapsto \mathrm{adj}(A)$, defined by $A^{-1} \cdot \det{A}$ on $A$ invertible). Let $TH[\pi]$ be the subalgebra of $H[\pi]$ generated by the trace elements $T(x)$ for $x \in H[\pi]$. This is a commutative subalgebra.
\par 
For $2$-generated groups, there is the following structure result for $H[\pi]$. Let $F_2 \onto \pi$ be a presentation and $a,b \in \pi$ the images of the generators. Let $x = T(a), y = T(b), z = T(ab)$ and consider the element
\[ \red = x^2 + y^2 + z^2 - xyz - 4 = 
T(aba^{-1}b^{-1}) - 2 \in TH[\pi] \]
In a representation $\rho$, the vanishing of $\tilde{\rho}(\red)$ detects commutativity of $\rho(a)$ and $\rho(b)$. Hence $\red$ detects absolute irreducibility by the following proposition:

\begin{prop} \cite[Proposition 4.1]{brumfiel} \label{prop:red_irreducible}
The following are equivalent:
\begin{enumerate}
    \item $1 \ot \tilde{\rho} : R \ot_{TH[\pi]} H[\pi] \to M_2(R)$ is an isomorphism
    \item $\rho : \pi \to \SL_2(R)$ is absolutely irreducible e.g. is irreducible for each geometric point $R \to \bar{k}$
    \item $\tilde{\rho}(\red) \in R$ is a unit.
\end{enumerate}
\end{prop}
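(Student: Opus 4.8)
The plan is to prove the cycle of implications $(1) \Rightarrow (2) \Rightarrow (3) \Rightarrow (1)$, handling the free group $F_2$ first and then descending to $\pi$ along the surjection $H[F_2] \onto H[\pi]$. Two structural inputs drive everything. The first is that $H[F_2]$ is generated as a $TH[F_2]$-module by $\{1, a, b, ab\}$; this is the module-theoretic shadow of Fricke--Vogt and lets me reduce arbitrary words using $g^{-1} = T(g) - g$. The second is the \emph{discriminant identity}: writing $e_1, e_2, e_3, e_4 = 1, a, b, ab$, I would compute the Gram matrix of the trace pairing $(u,v) \mapsto T(uv)$ on these four elements, evaluating each entry via Cayley--Hamilton (e.g.\ $a^2 = xa - 1$, so $T(a \cdot a) = x^2 - 2$), to obtain $\det[T(e_i e_j)] = -\red^2$ inside $TH[F_2] = \ZZ[x,y,z]$. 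I would also record the classical $\SL_2$ fact that $A, B \in \SL_2(\bar k)$ share an eigenvector if and only if $\tr{(ABA^{-1}B^{-1})} = 2$, i.e.\ if and only if the image of $\red$ vanishes.

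For $(3) \Rightarrow (1)$ I would first treat $F_2$. Since $H[F_2]$ is generated over $TH[F_2]$ by $1,a,b,ab$, the base change $R \ot_{TH[F_2]} H[F_2]$ is generated over $R$ by these, which the structure map sends to $I, A, B, AB \in M_2(R)$, with $A = \rho(a)$, $B = \rho(b)$. The trace form on $M_2(R)$ is nondegenerate with unit discriminant, so $\{I,A,B,AB\}$ is an $R$-basis exactly when its Gram determinant $\det[\tr{(e_i e_j)}]$ is a unit; by the discriminant identity this is $-\tilde{\rho}(\red)^2$, a unit by hypothesis. Hence the composite $R^4 \onto R \ot_{TH[F_2]} H[F_2] \to M_2(R)$ carries a standard basis to a basis and is an isomorphism; as the first surjection is then injective it is an isomorphism, whence so is $R \ot_{TH[F_2]} H[F_2] \to M_2(R)$. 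To descend, the quotients $H[F_2] \onto H[\pi]$ and $TH[F_2] \onto TH[\pi]$ induce a surjection $R \ot_{TH[F_2]} H[F_2] \onto R \ot_{TH[\pi]} H[\pi]$, compatible with the maps to $M_2(R)$ because $\rho$ factors through $\pi$. Since the composite $R \ot_{TH[F_2]} H[F_2] \onto R \ot_{TH[\pi]} H[\pi] \xrightarrow{1 \ot \tilde{\rho}} M_2(R)$ is the isomorphism just produced, the first surjection is injective hence an isomorphism, forcing $1 \ot \tilde{\rho}$ to be an isomorphism, which is $(1)$.

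For $(1) \Rightarrow (2)$ I would specialize the isomorphism at an arbitrary geometric point $R \to \bar{k}$, making $\bar{k} \ot_{TH[\pi]} H[\pi] \to M_2(\bar{k})$ surjective; since $H[\pi]$ is spanned by the image of $\pi$, the $\bar{k}$-algebra generated by $\rho_{\bar{k}}(\pi)$ is all of $M_2(\bar{k})$, so no line is preserved and $\rho_{\bar{k}}$ is irreducible, which at every geometric point is exactly absolute irreducibility. For $(2) \Rightarrow (3)$ I would note $\tilde{\rho}(\red) = \tr{(ABA^{-1}B^{-1})} - 2$, since $\red = T(aba^{-1}b^{-1}) - 2$. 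At each maximal ideal $\mathfrak{m} \subset R$, irreducibility over $\bar{k} = \overline{R/\mathfrak{m}}$ means $A, B$ have no common eigenvector, so by the classical fact $\tilde{\rho}(\red) \notin \mathfrak{m}$; lying outside every maximal ideal, it is a unit.

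The main obstacle is the discriminant identity together with the $F_2$-module structure: everything hinges on showing the trace form on $\{1,a,b,ab\}$ degenerates precisely along $\red = 0$, the concrete incarnation of the fact that $H[F_2][\red^{-1}]$ is Azumaya of rank $4$ over $TH[F_2][\red^{-1}]$. The descent from $F_2$ to $\pi$ is then only bookkeeping with base change along the two quotient maps, though one must verify the compatibility of the maps into $M_2(R)$ so that the relevant composite genuinely recovers the isomorphism from the $F_2$ case.
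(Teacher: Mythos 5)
Your proposal is correct, and it is worth noting that the paper itself gives no proof of this proposition: it is quoted from Brumfiel--Hilden \cite[Prop.~4.1]{brumfiel}. What you have written is essentially a faithful reconstruction of that source's argument, with the trace-form discriminant doing the real work. Your key identity checks out, including the sign: the Gram matrix of $(1,a,b,ab)$ under $(u,v) \mapsto T(uv)$ has determinant $-\red^2$ (e.g.\ at $x=y=0$ the matrix is block-diagonal after permutation with blocks of determinant $z^2-4$ and $-(z^2-4)$), and since the trace form on $M_2(R)$ has unit discriminant in the standard basis, your Gram-determinant criterion for $(I,A,B,AB)$ to be an $R$-basis is valid over any commutative ring via $\mathrm{Gram}(v) = C^{T}\,\mathrm{Gram}(f)\,C$. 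The generation of $H[F_2]$ over $TH[F_2]$ by $1,a,b,ab$ (via $g^{-1} = T(g)-g$, Cayley--Hamilton, and $ba = z - xy + ya + xb - ab$) is exactly the elementary input needed, so that Proposition~\ref{prop:basis} (freeness after inverting $\red$) is not required, and your descent from $F_2$ to $\pi$ along the two surjections is routine bookkeeping as you say. The one point to be careful about is your ``classical $\SL_2$ fact'' in $(2) \Rightarrow (3)$: the statement that $A,B$ share an eigenvector iff $\tr(ABA^{-1}B^{-1}) = 2$ is usually proved over $\CC$, whereas here $R$ is an arbitrary commutative ring whose residue fields may have characteristic $2$ (the paper applies the proposition over $\ZZ$-algebras). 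The fact is true over every algebraically closed field, but rather than cite it you should note that your own machinery proves it characteristic-freely: if $\rho_{\bar k}$ is irreducible then by Burnside the algebra generated by $A,B$ is all of $M_2(\bar k)$, Cayley--Hamilton reduction shows this algebra is spanned by $I,A,B,AB$, so these form a basis and the Gram determinant $-\tilde{\rho}(\red)^2$ is nonzero, whence $\tilde{\rho}(\red) \notin \mathfrak{m}$; combined with the unit criterion over all maximal ideals this closes $(2) \Rightarrow (3)$ with no appeal to characteristic-zero arguments.
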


Therefore, we get a description of the absolutely irreducible character variety:

\begin{prop} 
Under the isomorphism $M(F_2, \SL_2) \xrightarrow{\sim} \A^3_{x,y,z}$ given by $(x,y,z) \mapsto (\tr{A}, \tr{B}, \tr{AB})$ we have $M^{\irr}(F_2, \SL_2) \xrightarrow{\sim} D(\red)$, the basic where $\red$ is nonvanishing. Therefore, $M^{\genirr}(F_2, \SL_2) = M(F_2, \SL_2)$.
\end{prop}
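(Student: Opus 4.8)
The plan is to leverage Proposition~\ref{prop:red_irreducible}, which reduces absolute irreducibility to the nonvanishing of the single invariant function $\red$, and then to check that this nonvanishing condition is compatible with the formation of the GIT quotient. Write $R$ for the coordinate ring of $\gpHom{F_2}{\SL_2}$ and $q : \gpHom{F_2}{\SL_2} \to M(F_2,\SL_2) = \Spec{R^{\SL_2}}$ for the quotient map. By Fricke--Vogt we have $R^{\SL_2} = \Z[x,y,z]$ with $x = \tr{A}$, $y = \tr{B}$, $z = \tr{AB}$, and under this identification $\red$ is the invariant element $x^2 + y^2 + z^2 - xyz - 4 \in R^{\SL_2}$.

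First I would identify the geometrically irreducible locus in $\gpHom{F_2}{\SL_2}$ with the basic open $q^{-1}(D(\red))$ on the level of functors of points. For a ring $A$ and a representation $\rho : F_2 \to \SL_2(A)$, the equivalence $(2) \Leftrightarrow (3)$ of Proposition~\ref{prop:red_irreducible} says that $\rho$ is irreducible at each geometric point precisely when $\tilde{\rho}(\red) \in A$ is a unit. Since $\tilde{\rho}(\red)$ is the pullback $q^{*}\red$ evaluated at $\rho$, this says exactly that the $A$-point $\rho$ of $\gpHom{F_2}{\SL_2}$ factors through the open subscheme $q^{-1}(D(\red))$ where the invariant function $\red$ is invertible. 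Hence the subfunctor of geometrically irreducible representations is represented by $q^{-1}(D(\red))$, whose coordinate ring is the localization $R_{\red}$.

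Next I would pass to GIT quotients. By definition $M^{\irr}(F_2, \SL_2)$ is the GIT quotient of this basic open, so $M^{\irr}(F_2, \SL_2) = \Spec{(R_{\red})^{\SL_2}}$. Because $\red \in R^{\SL_2}$ is invariant and $\SL_2$ is reductive, forming invariants commutes with this localization, i.e. $(R_{\red})^{\SL_2} = (R^{\SL_2})_{\red}$. Therefore $M^{\irr}(F_2, \SL_2) = \Spec{(R^{\SL_2})_{\red}} = D(\red)$ as an open subscheme of $M(F_2, \SL_2) \cong \A^3_{x,y,z}$, which is the first assertion. For the $\genirr$ statement I would then compute the scheme-theoretic image of $M^{\irr}(F_2, \SL_2)_{\Q} = D(\red)_{\Q} \to M(F_2, \SL_2) = \A^3_{\Z}$. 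This image is cut out by the kernel of the restriction map $\Z[x,y,z] \to \Q[x,y,z][\red^{-1}]$. Since $\red$ is a nonzero polynomial it is a non-zero-divisor in $\Q[x,y,z]$, so this composite is injective and the kernel is zero; hence the scheme-theoretic image is all of $\A^3_{\Z}$, giving $M^{\genirr}(F_2, \SL_2) = M(F_2, \SL_2)$.

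The main obstacle is the compatibility of the GIT quotient with the basic open used in the third paragraph: one must know that $M^{\irr}$ is genuinely obtained by localizing the ring of invariants at $\red$, that is $(R_{\red})^{\SL_2} = (R^{\SL_2})_{\red}$, rather than merely agreeing set-theoretically with $D(\red)$. Over a field of characteristic zero this is immediate from the Reynolds operator, but over $\Z$ one should argue it from flatness of the localization $R^{\SL_2} \to (R^{\SL_2})_{\red}$ together with the reductivity of $\SL_2$. The remaining input, the injectivity $\Z[x,y,z] \hookrightarrow \Q[x,y,z][\red^{-1}]$, is elementary since $\red \neq 0$.
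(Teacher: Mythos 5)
Your proof is correct and matches the paper's intended route: the paper states this proposition without a written proof, as an immediate consequence of Proposition~\ref{prop:red_irreducible} together with Fricke--Vogt, and your write-up supplies exactly the routine verifications (the functorial identification of the irreducible locus with the saturated open $q^{-1}(D(\red))$, then the scheme-theoretic image computation of $D(\red)_{\Q} \to \A^3_{\Z}$ via injectivity of $\Z[x,y,z] \to \Q[x,y,z][\red^{-1}]$). The one ``obstacle'' you flag is in fact harmless without any reductivity or Reynolds-operator input even over $\Z$: invariants are the kernel of the map $R \to R \ot \cO(\SL_2)$ given by coaction minus the trivial map, and localization at the invariant element $\red$ is exact and commutes with this kernel, so $(R_{\red})^{\SL_2} = (R^{\SL_2})_{\red}$ holds directly.
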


Furthermore, $\red$ is central to the following structure result for the algebra $H[\pi]$ over $TH[\pi]$. 

\begin{prop} \cite[Proposition 3.5]{brumfiel} \label{prop:basis}
If $\phi : TH[\pi] \to R$ is a ring homomorphism such that $\phi(\red)$ is not a zero-divisor then $R \otimes_{TH[\pi]} H[\pi]$ is free of rank $4$ over $R$ with basis 
\[ \{ 1 \ot 1, 1 \ot a, 1 \ot b, 1 \ot ab \} \]
\end{prop}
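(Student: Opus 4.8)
The plan is to prove the two halves of ``free with basis $\{1\ot 1,1\ot a,1\ot b,1\ot ab\}$'' separately: that these four elements span $R\ot_{TH[\pi]}H[\pi]$ over $R$ (unconditionally), and that they are $R$-linearly independent (this is where the hypothesis on $\phi(\red)$ enters). Both halves rest on three families of identities holding already in $H[\pi]$, which then base-change along $\phi$. First, Cayley--Hamilton: multiplying the central element $T(g)=g+g^{-1}$ by $g$ gives $g^2=T(g)\,g-1$ for every $g\in\pi$, so in particular $a^2=xa-1$, $b^2=yb-1$, $(ab)^2=z\,ab-1$, together with $a^{-1}=x-a$ and $b^{-1}=y-b$. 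Second, the ``swap'' identity
\[ ab+ba=xb+ya+(z-xy), \]
which I would verify directly: expanding the right-hand side using $x=a+a^{-1}$, $y=b+b^{-1}$, $z=ab+b^{-1}a^{-1}$ and $xy=(a+a^{-1})(b+b^{-1})$, everything cancels down to the relation $b^{-1}x=xb^{-1}$, which holds because $x\in TH[\pi]$ is central.

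For spanning, let $V\subseteq H[\pi]$ be the $TH[\pi]$-submodule generated by $\{1,a,b,ab\}$. The identities above show that left multiplication by $a$ and by $b$ carries $V$ into $V$: for instance $a\cdot ab=a^2b=x\,ab-b$, while $b\cdot a=xb+ya+(z-xy)-ab$ and $b\cdot ab=zb+a-x$. Since $1,a^{-1},b^{-1}\in V$ and $H[\pi]$ is generated as a $TH[\pi]$-algebra by $a,b,a^{-1},b^{-1}$, an induction on word length gives $V=H[\pi]$; base-changing along $\phi$ shows the four displayed elements generate $R\ot_{TH[\pi]}H[\pi]$ over $R$, with no hypothesis on $\phi(\red)$ needed. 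For independence I would use the trace pairing $\beta(u,v)=T(uv)$, which is $TH[\pi]$-bilinear and $TH[\pi]$-valued, hence base-changes to an $R$-bilinear form $\beta_R$ with $\beta_R(1\ot e_i,1\ot e_j)=\phi\big(T(e_ie_j)\big)$. Using the first-paragraph identities one evaluates the Gram matrix $G=\big(T(e_ie_j)\big)$ on $e_1,\dots,e_4=1,a,b,ab$ as an explicit symmetric matrix over $\Z[x,y,z]$ (e.g.\ $T(a^2)=x^2-2$, $T(a\cdot ab)=xz-y$, $T((ab)^2)=z^2-2$), and the crucial computation is the determinant identity $\det G=-\red^{\,2}$. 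Granting this, if $\sum_i c_i(1\ot e_i)=0$ with $c_i\in R$, pairing against each $1\ot e_j$ gives $\phi(G)\,\mathbf{c}=0$; multiplying by the adjugate yields $-\phi(\red)^2\,\mathbf{c}=0$, and since $\phi(\red)$ is a non-zero-divisor so is $\phi(\red)^2$, forcing $\mathbf{c}=0$. With spanning this proves freeness of rank $4$ on the asserted basis.

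The main obstacle is the determinant identity $\det G=-\red^{\,2}$, which I would establish by expanding the $4\times4$ determinant (both sides are degree-six polynomials in $x,y,z$, so it is a finite check). This is precisely the point at which the meaning of $\red$ supplied by Proposition~\ref{prop:red_irreducible}---that $\red$ detects whether $\{1,a,b,ab\}$ spans a full $2\times2$ matrix algebra---gets converted into the algebraic input driving independence. The feature worth emphasizing is that one never inverts $\red$: because the obstruction to independence is literally $\det\phi(G)=-\phi(\red)^2$, the adjugate identity lets the weaker hypothesis that $\phi(\red)$ is merely a non-zero-divisor suffice, which is exactly the generality in which the proposition is stated and later applied.
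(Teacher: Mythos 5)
Your proposal is correct and follows essentially the same route as the proof this paper relies on: the paper does not reprove Proposition~\ref{prop:basis} but imports it from Brumfiel--Hilden, where the argument is precisely your two-step one --- spanning of $\{1, a, b, ab\}$ over $TH[\pi]$ via Cayley--Hamilton together with the swap identity $ab+ba = xb+ya+(z-xy)$, and independence via the $TH[\pi]$-bilinear trace pairing, multiplying a putative relation by the basis elements, taking traces, and using the adjugate to convert the non-zero-divisor hypothesis on $\phi(\red)$ into vanishing of the coefficients. Your crucial Gram-determinant identity $\det G = -\red^2$ is indeed correct (it checks out, e.g., at $(x,y,z)=(0,0,0)$, $(0,0,1)$, and $(1,2,3)$, where $\det G = -16, -9, -16$ respectively), so the sketch is complete as written.
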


The following result allows us to reconstruct a representation, up to conjugaction, from its traces.

\begin{prop} \cite[Proposition 4.3]{brumfiel}
Let $\phi : TH[\pi] \to R$ be a homomorphism such that $\phi(\red) \in R$ is not a zero divisor. Then $\phi$ extends to an involution preserving algebra homomorphism $\tilde{\rho} : H[\pi] \to M_2(R')$ where $R'$ is either $R$ or a quadratic extension of $R$. 
\end{prop}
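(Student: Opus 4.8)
The plan is to realize the rank-four algebra $B := R \otimes_{TH[\pi]} H[\pi]$ explicitly inside a $2 \times 2$ matrix algebra. By Proposition~\ref{prop:basis}, the hypothesis that $\phi(\red)$ is not a zero divisor guarantees that $B$ is free of rank $4$ over $R$ with basis $\{1, \alpha, \beta, \alpha\beta\}$, where $\alpha, \beta$ are the images of $a, b$. Writing $x = \phi(T(a))$, $y = \phi(T(b))$, $z = \phi(T(ab))$, the Cayley--Hamilton relations $g^2 - T(g) g + 1 = 0$ give $\alpha^2 = x\alpha - 1$ and $\beta^2 = y\beta - 1$, while expanding $T(ab) = ab + \iota(b)\iota(a)$ in the basis yields the third relation $\alpha\beta + \beta\alpha = y\alpha + x\beta + (z - xy)$. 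First I would verify that these three identities \emph{present} $B$: the quotient of the free algebra $R\langle\alpha,\beta\rangle$ by them is spanned by the four monomials $1,\alpha,\beta,\alpha\beta$, so it surjects onto the free rank-four module $B$ by a map carrying a four-element spanning set onto a basis, which forces an isomorphism. In particular the defining relation of $\pi$ and all trace relations are automatic consequences of these three identities; this is exactly where the non-zero-divisor hypothesis does its real work.

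Granting this presentation, defining an $R'$-algebra homomorphism $B \otimes_R R' \to M_2(R')$ reduces to choosing $\mathbf A, \mathbf B \in M_2(R')$ satisfying the same three relations. The universal $2 \times 2$ trace identity
\[ \mathbf A \mathbf B + \mathbf B \mathbf A = (\tr \mathbf A)\,\mathbf B + (\tr \mathbf B)\,\mathbf A + \big(\tr(\mathbf A \mathbf B) - \tr\mathbf A \cdot \tr \mathbf B\big) I \]
shows that for $\mathbf A, \mathbf B \in \SL_2(R')$ the three relations hold \emph{precisely} when $\tr \mathbf A = x$, $\tr \mathbf B = y$, and $\tr(\mathbf A \mathbf B) = z$ (the conditions $\det = 1$ and the quadratic relations following from Cayley--Hamilton for $\SL_2$). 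Thus the whole problem collapses to producing a pair of $\SL_2(R')$-matrices with three prescribed traces; this reduction is the conceptual heart of the argument.

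These I would construct by hand. Take the companion matrix $\mathbf A = \left(\begin{smallmatrix} x & -1 \\ 1 & 0 \end{smallmatrix}\right) \in \SL_2(R)$, which has trace $x$ for every $x$, thereby avoiding any diagonalizability assumption on $\mathbf A$ (the naive approach of diagonalizing $\mathbf A$ fails when $x^2 - 4$ is not a unit, which the hypothesis does not control). Seeking $\mathbf B = \left(\begin{smallmatrix} p & q \\ r & s \end{smallmatrix}\right)$ with $\tr \mathbf B = y$ and $\tr(\mathbf A \mathbf B) = z$ forces $s = y - p$ and $r = xp + q - z$, after which $\det \mathbf B = 1$ becomes a single quadratic relation between $p$ and $q$. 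Setting $q = 1$ leaves $p^2 + (x-y)p + (2 - z) = 0$; adjoining a root defines $R' := R[p]/\big(p^2 + (x-y)p + (2-z)\big)$, free of rank $\le 2$ over $R$ and equal to $R$ exactly when the quadratic already splits. This is the source of the (at most quadratic) extension in the statement.

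Finally I would check the two compatibilities. Involution-preservation holds because $\iota$ sends $\alpha \mapsto x - \alpha$ while the adjugate involution on $M_2(R')$ sends $\mathbf A \mapsto (\tr \mathbf A) I - \mathbf A$; as both are anti-automorphisms agreeing on the generators, they agree on all of $B$. That the resulting $\tilde\rho$ extends $\phi$ follows because $TH[\pi]$ is generated over $\ZZ$ by $T(a), T(b), T(ab)$, which map to the scalar matrices $xI, yI, zI$, matching $\phi$ on generators and hence everywhere after the inclusion $R \hookrightarrow R'$; the desired map is the composite $H[\pi] \to B \otimes_R R' \xrightarrow{\tilde\rho} M_2(R')$. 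The step I expect to require the most care is the presentation claim of the first paragraph, since it is precisely what allows us to discard the defining relation of $\pi$ and reduce the entire construction to matching three traces.
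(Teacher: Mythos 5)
Your proposal is correct. Note first that the paper itself offers no proof of this statement --- it is imported verbatim from \cite[Proposition 4.3]{brumfiel} --- so your argument is a self-contained substitute rather than a parallel of anything in the text, and it follows the standard skeleton: Proposition~\ref{prop:basis} makes $B = R \ot_{TH[\pi]} H[\pi]$ free of rank $4$; you verify that the three relations $\alpha^2 = x\alpha - 1$, $\beta^2 = y\beta - 1$, $\alpha\beta + \beta\alpha = y\alpha + x\beta + (z-xy)$ actually \emph{present} $B$ (your spanning-set-onto-basis argument is sound, and this is correctly identified as the place where the non-zero-divisor hypothesis enters); the polarized Cayley--Hamilton identity then reduces the whole problem to realizing the three traces $x, y, z$ by a pair of $\SL_2(R')$-matrices, which your companion matrix $\mathbf{A}$ together with the monic quadratic $p^2 + (x-y)p + (2-z)$ for $\mathbf{B}$ accomplishes; I checked the determinant and trace computations and they are right. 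Brumfiel--Hilden's construction has the same shape, differing only in the normal form chosen for the pair of matrices (e.g.\ one can instead adjoin a root of $\lambda^2 - z\lambda + 1$ and put the second matrix in anti-diagonal form), so conceptually you have rediscovered their proof rather than found a new one.

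Two small points deserve tightening, neither of which breaks the argument. First, your claim that the three relations hold ``precisely when'' $\tr{\mathbf{A}} = x$, $\tr{\mathbf{B}} = y$, $\tr{(\mathbf{A}\mathbf{B})} = z$ overstates the converse: a scalar matrix $\lambda I$ with $\lambda^2 = x\lambda - 1$ satisfies the first relation without having trace $x$. Only the forward (sufficiency) direction is used, so this is a harmless slip of wording. Second, you use an involution on $B$ without remarking that $\iota$ descends to $B$ at all; this needs the observation that $TH[\pi]$ is central in $H[\pi]$ and pointwise $\iota$-invariant (since $\iota(T(h)) = \iota(h) + h = T(h)$), after which your comparison of the two anti-automorphisms on the generators $\alpha, \beta$ is exactly right. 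Finally, your appeal to $T(a), T(b), T(ab)$ generating $TH[\pi]$ is valid in the paper's standing two-generator context (it follows from Fricke--Vogt and the surjectivity of $TH[F_2] \to TH[\pi]$ noted in the proof of Proposition~\ref{prop:explicit_equations}), though the extension property $\tilde{\rho}|_{TH[\pi]} = \phi$ is in fact automatic from the construction, since $1 \ot t = \phi(t)(1 \ot 1)$ in $B$ for $t \in TH[\pi]$.
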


Suppose $\pi = \left< a,b \mid w(a,b) \right>$ for some word $w(a,b)$. Since in $TH[F_2] = \Z[x,y,z]$ the element $\red$ is not a zero divisor, we see that in $H[\pi]$ we can write $w(a,b)$ in terms of the basis $1, a, b, ab$,
\[ w(a,b) = c_1 1 + c_a a + c_b b + c_{ab} ab \]
for $c_1, c_a, c_b, c_{ab} \in TH[F_2] = \Z[x,y,z]$. Thus we get explicit generators for the ideal $I$ whose vanishing locus, $V(I)$, defines the generically irreducible character varieties.

\begin{prop} \label{prop:explicit_equations}
The locally closed subscheme $M^{\irr}(\pi, \SL_2) \subset M(F_2, \SL_2)$ is identified with the open set $D(\red)$ of $V(c_1 - 1, c_a, c_b, c_{ab}) \subset \A^3$. Under this identification, $M^{\genirr}(\pi, \SL_2)$ is cut out by the $w$-elimination ideal $I \subset \Z[x,y,z]$ of $(c_1 - 1, c_a, c_b, c_{ab}, w \cdot \red - 1) \subset \Q[x,y,z,w]$.
\end{prop}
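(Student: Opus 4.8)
The plan is to split the claim into two parts: first an exact scheme-theoretic identification of $M^{\irr}(\pi,\SL_2)$ as a subscheme of $D(\red)=M^{\irr}(F_2,\SL_2)$, and then a purely formal elimination argument that produces the equations for $M^{\genirr}(\pi,\SL_2)$ from this identification.

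For the first part I would work over the open locus $D(\red)$, writing $R_0 := \Z[x,y,z]_\red = TH[F_2]_\red$. Since $\red$ is a nonzerodivisor in the domain $\Z[x,y,z]$, Proposition~\ref{prop:red_irreducible} applied to the localization map $TH[F_2]\to R_0$ shows that $R_0\otimes_{TH[F_2]}H[F_2]\xrightarrow{\sim} M_2(R_0)$, and Proposition~\ref{prop:basis} shows the images of $\{1,a,b,ab\}$ form an $R_0$-basis; this is the universal absolutely irreducible representation over $D(\red)$, with $a,b$ mapping to matrices $A,B$. Next I would use that $\pi=F_2/N$ for $N$ the normal closure of $w$, so that $H[\pi]=H[F_2]/\mathfrak I$ with $\mathfrak I$ the two-sided ideal generated by $w-1$. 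Expanding $w=c_1+c_a a+c_b b+c_{ab}ab$ as in the excerpt, the image of $w-1$ in $M_2(R_0)$ is $(c_1-1)I+c_aA+c_bB+c_{ab}AB$. Because $M_2(R_0)$ is a full matrix algebra over the commutative ring $R_0$, every two-sided ideal has the form $M_2(\mathfrak a)$; the one generated by the image of $w-1$ is $M_2(\mathfrak a)$ with $\mathfrak a=(c_1-1,c_a,c_b,c_{ab})$, using that the coordinates of an element in the $R_0$-basis $\{I,A,B,AB\}$ generate the same ideal as its matrix entries (change of basis being an $R_0$-linear isomorphism). Hence $R_0\otimes_{TH[F_2]}H[\pi]\cong M_2(R_0/\mathfrak a)$ with center $TH[\pi]_\red\cong R_0/\mathfrak a$, and identifying $M^{\irr}(\pi,\SL_2)$ with $\Spec{TH[\pi]_\red}$ exhibits it as $D(\red)\cap V(c_1-1,c_a,c_b,c_{ab})$, which is the first assertion.

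For the second part I would unwind Definition~\ref{defn:irred_subschemes}: $M^{\genirr}(\pi,\SL_2)$ is the scheme-theoretic image of $M^{\irr}(\pi,\SL_2)_\Q\to M(\pi,\SL_2)$. Since $M(\pi,\SL_2)\hookrightarrow M(F_2,\SL_2)=\A^3$ is a closed subscheme, scheme-theoretic image is transitive through it, so this equals the scheme-theoretic closure of $M^{\irr}(\pi,\SL_2)_\Q$ inside $\A^3_\Z=\Spec{\Z[x,y,z]}$. By the first part the latter is the locally closed subscheme with coordinate ring $\Q[x,y,z]_\red/(c_1-1,c_a,c_b,c_{ab})$, so its scheme-theoretic closure is cut out by $I=\ker\big(\Z[x,y,z]\to \Q[x,y,z]_\red/(c_1-1,c_a,c_b,c_{ab})\big)$. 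Implementing the localization at $\red$ by the Rabinowitsch variable $w$ with relation $w\cdot\red-1=0$ identifies the target ring with $\Q[x,y,z,w]/(c_1-1,c_a,c_b,c_{ab},w\cdot\red-1)$, whence $I$ is exactly the $w$-elimination ideal in the statement and $M^{\genirr}(\pi,\SL_2)=V(I)$.

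The step I expect to be the main obstacle is the scheme-theoretic identification $M^{\irr}(\pi,\SL_2)=\Spec{TH[\pi]_\red}$ over $\Z$ — equivalently, that on the irreducible locus the GIT quotient is computed by the trace subalgebra and that $M^{\irr}(\pi,\SL_2)\hookrightarrow D(\red)$ is the closed immersion cut out precisely by the ideal $\mathfrak a$. Over a field this is the standard theory of stable points and geometric quotients, but over $\Z$, where $\SL_2$ is not linearly reductive and invariants need not be exact, one must instead descend along the matrix-algebra structure $R_0\otimes_{TH[F_2]}H[F_2]\cong M_2(R_0)$; the Brumfiel--Hilden structure theory packaged in the cited propositions is exactly what guarantees the quotient is the center $R_0/\mathfrak a$ and makes the computation go through.
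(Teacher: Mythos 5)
Your second half — transitivity of scheme-theoretic image through the closed embedding $M(\pi,\SL_2)\hookrightarrow \A^3$ plus the Rabinowitsch variable implementing the localization at $\red$ — is correct and is essentially the paper's own argument for the $M^{\genirr}$ statement. The problem is in your first half. Proposition~\ref{prop:red_irreducible} cannot be applied to the localization map $TH[F_2]\to R_0$: its hypotheses concern the map $1\ot\tilde{\rho}$ attached to an actual representation $\rho:\pi\to\SL_2(R)$, and no such representation is available here. Indeed the conclusion you extract from it, $R_0\ot_{TH[F_2]}H[F_2]\cong M_2(R_0)$, is false: $H[F_2][\red^{-1}]$ is an Azumaya algebra over $R_0$ whose class is nontrivial. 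Concretely, the $\mathbb{R}$-point $x=y=z=0$ has $\red=-4$, a unit, and in the fiber the defining relations give $a^{-1}=-a$, $b^{-1}=-b$, hence $a^2=b^2=-1$ and $ab+ba=T(ab)=z=0$, so $\mathbb{R}\ot_{TH[F_2]}H[F_2]$ is the Hamilton quaternion algebra $\mathbb{H}$, not $M_2(\mathbb{R})$; a global isomorphism with $M_2(R_0)$ would force $M_2(\mathbb{R})$ in every such fiber. For the same reason there is no ``universal absolutely irreducible representation over $D(\red)$'' — this Brauer obstruction is precisely why the extension result quoted from \cite[Proposition~4.3]{brumfiel} only produces a representation after a quadratic extension $R'$ of $R$, and your closing paragraph, which asserts that the cited propositions guarantee the matrix-algebra model over $R_0$ itself, misreads them.

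The gap is repairable in two ways. You could keep your ideal-theoretic strategy by replacing $M_2(R_0)$ with the Azumaya algebra $H[F_2][\red^{-1}]$: two-sided ideals of an Azumaya algebra correspond to ideals of its center, and your observation that coordinates in the basis $\{1,a,b,ab\}$ cut out the same ideal as matrix entries can be checked étale-locally and descended — but none of this descent appears in your write-up. Alternatively, and this is the paper's route, no matrix model is needed at all: Proposition~\ref{prop:basis} applied with $R=TH[\pi][\red^{-1}]$ shows $1,a,b,ab$ remain independent in $H[\pi][\red^{-1}]$, so comparing coefficients in $\phi(w)=1$ yields directly that $c_1\mapsto 1$ and $c_a,c_b,c_{ab}\mapsto 0$, i.e.\ $\mathfrak{a}=(c_1-1,c_a,c_b,c_{ab})$ lies in the kernel of $TH[F_2]\to TH[\pi][\red^{-1}]$; conversely any algebra map $H[F_2]\to R$ killing $\mathfrak{a}$ sends $w\mapsto 1$ and therefore factors through $H[\pi]$, giving the reverse containment up to $\red$-saturation and hence the identification of $M^{\irr}(\pi,\SL_2)$ with $D(\red)\cap V(\mathfrak{a})$. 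As written, your proof of the first identification rests on a false isomorphism and must be revised along one of these lines.
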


\begin{proof}
The natural map on trace-like elements $\phi|_{TH} : TH[F_2] \to TH[\pi]$ is exactly the pullback map of the embedding of affine schemes $M(\pi, \SL_2) \subset M(F_2, \SL_2)$. By definition, $\phi|_{TH}$ is surjective because $\phi$ is surjective, $\phi(T(x)) = T(\phi(x))$, and $TH[\pi]$ is generated by elements of the form $T(x)$. 
\par 
By definition, the natural quotient map $\phi : H[F_2] \to H[\pi]$ satisfies $\phi(w(a,b)) = 1$. Because $H[F_2]$ is free over $TH[F_2]$ with basis $1,a,b,ab$ we can write
\[ \phi(w(a,b)) = \phi(c_1) 1 + \phi(c_a) a + \phi(c_b) b + \phi(c_{ab}) ab = 1. \]
We would like to compare the coefficients to obtain the kernel of $\phi|_{TH}$. However, $1, a, b, ab$ may not form a basis of $H[\pi]$ over $TH[\pi]$ if $\red$ is a zero-divisor but by Proposition~\ref{prop:basis} these elements are independent in $H[\pi][\red^{-1}]$ over $TH[\red^{-1}]$. Hence, under the natural map $\phi_{\red} : H[F_2] \to H[\pi][\red^{-1}]$ we obtain $\phi_{\red}(c_1) = 1$ and $\phi_{\red}(c_a) = \phi_{\red}(c_b) = \phi_{\red}(c_{ab}) = 0$. If $\rho : H[F_2] \to R$ satisfies $\rho(c_1 - 1, c_a, c_b, c_{ab}) = 0$ then clearly $\rho(w(a,b)) = \rho(1)$ so $\rho$ factors through $H[F_2] \to H[\pi]$. Hence these elements, along with those annihilated by $\red$, generate the kernel of $TH[F_2] \to TH[\pi][\red^{-1}]$ which corresponds to the immersion $M^{\irr}(\pi, \SL_2) \subset M(F_2, \SL_2)$. 
\par 
By definition, $M^{\genirr}(\pi, \SL_2)$ is the scheme-theoretic image of $M^{\irr}(\pi, \SL_2)_{\Q} \to M(\pi, \SL_2)$ or equivalently in $M(F_2, \SL_2)$ since the former is closed in the latter. This elimination ideal $I$ defines exactly the scheme-theoretic image in $M(F_2, \SL_2)$ so we are done.
\end{proof}

\subsection{Some facts about $\mathrm{SL}_2$ matrices} 

\subsubsection{Definition of $c_n(t),d_n(t)\in\Z[t]$} 
After Cayley--Hamilton, every $M \in \mathrm{SL}_2(\mathbb{C})$ satisfies the equation $M^2=\mathrm{tr}(M)\cdot M - 1$. We define the polynomials $c_n(t)$, $d_n(t)$ by imposing that the formula 
\[ M^n = c_n(\mathrm{tr}(M))\cdot M + d_n(\mathrm{tr}(M))\] 
holds for all $M$. They are monic and satisfy the recursive formulas 
\[ c_{n+1}(t)=tc_n(t)+d_n(t), \quad d_{n+1}(t)=-c_{n}(t). \] 
Moreover, since $M^{-1}=-M+\mathrm{tr}(M)$ and $\mathrm{tr}(M^{-1})=\mathrm{tr}(M)$, we find that \[\begin{aligned}M^{-n} &= (M^{-1})^n = c_{n}(\mathrm{tr}(M^{-1}))\cdot M^{-1} + d_{n}(\mathrm{tr}(M^{-1}))\\ &= -c_n(\mathrm{tr}(M))\cdot M + \mathrm{tr}(M)c_n(\mathrm{tr}(M))+d_n(\mathrm{tr}(M)),\end{aligned}\] hence $c_{-n}(t)=-c_n(t)$ and $d_{-n}(t)=tc_n(t)+d_n(t)=c_{n+1}(t)$.

Define $c_0(t):=0, d_0(t):=1$. Then $c_n(t), d_n(t)\in\Z[t]$ for every $n\in\Z$ and they are either zero or monic. Moreover, for $|n|>1$ we have $\mathrm{deg}_t~c_n=|n|-1$ and $\mathrm{deg}_t~d_n=|n|-2$.

\subsubsection{Recursive relations in matrix form} Writing the recursive relationships of the polynomials $c_n(t), d_n(t)$ in matrix form we see that 
\[\begin{pmatrix}c_{n+1}(t)\\ d_{n+1}(t)\end{pmatrix} = \begin{pmatrix}t &1\\ -1 &0\end{pmatrix}\begin{pmatrix}c_n(t)\\ d_n(t)\end{pmatrix}.\] 
Since $c_0(t)=0, d_0(t)=1$, if we denote $R(t) :=\left(\begin{matrix}t &1\\ -1 &0\end{matrix}\right) \in \SL_2(\Z[t])$,
\[\begin{pmatrix}c_n(t)\\ d_n(t)\end{pmatrix} = R(t)^n \begin{pmatrix}0\\ 1\end{pmatrix}.\] 
Observe that the characteristic polynomial of $R$ is $\lambda^2-t\lambda+1$ and also \[R(t)^n=\begin{pmatrix}c_{n+1}(t) &c_n(t)\\ d_{n+1}(t) &d_n(t)\end{pmatrix} = \begin{pmatrix}tc_n(t)+d_n(t) & c_n(t)\\ -c_n(t) &d_n(t) \end{pmatrix}.\] 
Hence
\begin{equation} \label{eq:determinant_condition}
1=\det(R^n)=c_n(t)^2+tc_n(t)d_n(t)+d_n(t)^2.
\end{equation} 
In particular, $c_n(t)$ and $d_n(t)$ do not vanish simultaneously.
\par 
Also, it is easy to see that 
\[\begin{pmatrix}\pm2 & 1\\ -1 &0\end{pmatrix}^n = \begin{pmatrix}(\pm1)^nn+1 &(\pm1)^{n-1}n\\ -(\pm1)^{n-1}n &-(\pm1)^{n}(n-1)\end{pmatrix},\] 
so $c_n(\pm2)=(\pm1)^{n-1}n$ and $d_n(\pm2)=-(\pm1)^{n}(n-1)$.

\subsubsection{Factorization and divisibility properties} \label{section:polynomials}

Let $t_0\in \ol{\Q}$ be the trace of a matrix $M \in \SL_2(\ol{\Q})$ such that $c_n(t_0)=0$. Then $M^n=\pm I$, which is possible if and only if $M^{2n}=I$. Therefore, $M$ is a diagonalizable matrix and its eigenvalues are $2n$-th roots of unity. Therefore $t_0 = \zeta_{2n}^k+\zeta_{2n}^{-k} = 2\cos(k\pi/n)$ for some $k=1,\dots,n-1$ and all of this values are possible. Since $c_n(t)$ is a monic polynomial of degree $|n|-1$ (for $n\neq0$) and $2\cos(k\pi/n)$ is a root for every $k=1,\dots,n-1$ it follows that 
\[c_n(t)=\prod_{k=1}^{n-1}\left(t-2\cos\left(\frac{k\pi}{n}\right)\right).\] 
In particular, every root of $c_n(t)$ is the trace of some matrix in $\mathrm{SL}_2(\mathbb{Q}(\mu_\infty))$. Then 
\begin{align*}
\begin{pmatrix} c_{n+m}(t) \\ d_{n+m}(t)\end{pmatrix} &= R^{n+m} \begin{pmatrix}0\\ 1\end{pmatrix} = R^n \begin{pmatrix}c_m(t) \\ d_m(t) \end{pmatrix}
\\
&= \begin{pmatrix}c_{n+1}(t) &c_n(t)\\ d_{n+1}(t) &d_n(t)\end{pmatrix} \begin{pmatrix}c_m(t) \\ d_m(t) \end{pmatrix} 
\\
& = \begin{pmatrix} c_{n+1}(t) c_m(t) + c_n(t) d_m(t) \\ d_{n+1}(t) c_m(t) + d_n(t) d_m(t) \end{pmatrix}\\ &=\begin{pmatrix}tc_n(t)c_m(t)+c_n(t)d_m(t)+d_n(t)c_m(t)\\ d_n(t)d_m(t)-c_n(t)c_m(t)\end{pmatrix}
\end{align*}
giving a product formula.

By definition, $M^{nm}=(M^n)^m = c_m(\mathrm{tr}(M^n))M^n+d_m(\mathrm{tr}(M^n)$. Since $M^n=c_n(t)M+d_n(t)$ and $\mathrm{tr}(M^n)=tc_n(t)+2d_n(t)$, we get \[\begin{aligned}c_{nm}(t)&=c_m(tc_n(t)+2d_n(t))\cdot c_n(t),\\ d_{nm}(t)&=c_m(tc_n(t)+2_n(t))\cdot d_n(t)+d_m(tc_n(t)+2d_n(t)).\end{aligned}\] In particular, $c_n(t)~|~c_{nm}(t)$.

Moreover, the following will be useful.
\begin{lemma}\label{lemma:gcd}
$c_{\mathrm{gcd}(n,m)}(t)=\mathrm{gcd}(c_n(t),c_m(t))$.
\end{lemma}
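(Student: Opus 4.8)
The plan is to prove that $c_{\gcd(n,m)}(t) = \gcd(c_n(t), c_m(t))$ by combining the explicit root description of $c_n(t)$ with a standard induction via the Euclidean algorithm, leveraging the divisibility $c_n \mid c_{nm}$ already established. Write $d = \gcd(n,m)$. For the easy direction, I would first show $c_d(t) \mid \gcd(c_n(t), c_m(t))$: since $d \mid n$ and $d \mid m$, the relation $c_d(t) \mid c_{dm'}(t)$ (taking $n = dm'$ in the divisibility $c_n \mid c_{nm}$) gives $c_d(t) \mid c_n(t)$ and $c_d(t) \mid c_m(t)$, so $c_d(t)$ divides their gcd in $\Z[t]$.

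For the reverse direction I would identify the gcd through its roots. Because each $c_n(t)$ is monic and separable with the explicit factorization $c_n(t) = \prod_{k=1}^{n-1}(t - 2\cos(k\pi/n))$, computing $\gcd(c_n, c_m)$ amounts to determining the common roots. The root $2\cos(k\pi/n)$ equals $\zeta_{2n}^k + \zeta_{2n}^{-k}$, so a value $t_0 = \mu + \mu^{-1}$ (with $\mu$ a root of unity) is a root of $c_n$ precisely when $\mu^{2n} = 1$ but $\mu^2 \neq 1$, i.e.\ when the order of $\mu$ divides $2n$ and $\mu \neq \pm 1$. Thus $t_0$ is a common root of $c_n$ and $c_m$ iff the order of $\mu$ divides both $2n$ and $2m$, hence divides $\gcd(2n,2m) = 2d$, which (together with $\mu \neq \pm 1$) is exactly the condition for $t_0$ to be a root of $c_d$. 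Since all three polynomials are monic and squarefree, matching root sets forces $\gcd(c_n, c_m) = c_d$.

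Rather than arguing purely through roots, a cleaner alternative is a direct induction on $\max(|n|,|m|)$ mirroring the Euclidean algorithm, using the product formula
\[ c_{n+m}(t) = t\,c_n(t)c_m(t) + c_n(t)d_m(t) + d_n(t)c_m(t) \]
derived above. From this one extracts a relation expressing $c_{n+m}$ modulo $c_m$, showing $\gcd(c_{n+m}, c_m) = \gcd(c_n, c_m)$ after checking that $c_m$ and $d_m$ are coprime. This coprimality is guaranteed by equation~\eqref{eq:determinant_condition}, namely $c_m^2 + t\,c_m d_m + d_m^2 = 1$, which exhibits an explicit $\Z[t]$-linear combination of $c_m$ and $d_m$ equal to $1$; hence any common factor of $c_m$ and $d_m$ is a unit. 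The base case $\gcd(n,m)=0$ or one of them zero is handled by $c_0 = 0$ and $\gcd(c_n, 0) = c_n$.

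The main obstacle will be the bookkeeping in the inductive step: one must verify that reducing the index pair $(n+m, m)$ to $(n, m)$ genuinely preserves the gcd, which requires the coprimality of $c_m$ and $d_m$ to strip off the extraneous factor $d_m$ appearing in the product formula. I expect the root-theoretic argument to be the most transparent route, so I would present that as the primary proof, using separability (each $c_n$ has distinct roots $2\cos(k\pi/n)$) to pass cleanly between equality of root sets and equality of monic polynomials, and invoking \eqref{eq:determinant_condition} only if the Euclidean formulation is needed to make the divisibility bookkeeping rigorous.
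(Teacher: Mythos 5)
Your proposal is correct, but it takes a genuinely different route from the paper. The easy direction ($c_g \mid c_n, c_m$ via $c_n \mid c_{nm}$, writing $g = \gcd(n,m)$) coincides with the paper's, but for the reverse inclusion the paper gives a one-shot ideal-membership argument: it writes Bezout's identity $g = an + bm$ at the level of exponents and combines the addition formula $c_{an+bm} = d_{bm}c_{an} + c_{an+1}c_{bm}$ with the composition formula $c_{an}(t) = c_a(tc_n(t)+2d_n(t))\cdot c_n(t)$ to exhibit $c_g$ explicitly as a $\Z[t]$-linear combination of $c_n$ and $c_m$, yielding the stronger ideal equality $(c_g(t)) = (c_n(t), c_m(t))$ in $\Z[t]$ --- which is not automatic from a divisibility statement, since $\Z[t]$ is not a B\'ezout domain. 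Your primary, root-theoretic argument instead identifies $\gcd(c_n, c_m)$ through the common roots $\mu + \mu^{-1}$ with $\mathrm{ord}(\mu) \mid \gcd(2n,2m) = 2g$ and $\mu \neq \pm 1$, using monicity and separability (the roots $2\cos(k\pi/n)$ are distinct) and Gauss's lemma to descend from $\Q[t]$ to $\Z[t]$; this is transparent and fully suffices for the lemma's only use in the paper (in the proof of Lemma~\ref{lemma:system_coprime}, where one needs that a common root of $c_{n_1}, c_{n_2}$ is a root of $c_g$), but it delivers only the UFD gcd, not the ideal equality. Your Euclidean alternative is closer in spirit to the paper: the step $\gcd(c_{n+m}, c_m) = \gcd(c_n, c_m)$ via $c_{n+m} \equiv c_n d_m \pmod{c_m}$ together with the coprimality of $c_m$ and $d_m$ from $c_m^2 + t c_m d_m + d_m^2 = 1$ is sound, and if you track ideals rather than gcds (e.g.\ multiplying $c_n$ by $c_m^2 + tc_md_m + d_m^2 = 1$ shows $c_n \in (c_{n+m}, c_m)$) the induction recovers the ideal-theoretic statement as well, essentially unwinding the paper's single Bezout application into Euclidean steps. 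Both of your routes are complete modulo routine bookkeeping (signs via $c_{-r} = -c_r$ and the base case $c_0 = 0$, which you note).
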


\begin{proof}
Denote $g=\mathrm{gcd}(n,m)$. We already know $c_g(t)~|~c_n(t),c_m(t)$; so $(c_n(t),c_m(t))\subset (c_g(t))\subset\Z[t]$. But also, if we write Bezout's identity $g=an+bm$, then \[\begin{aligned}c_g(t) &=c_{an+bm}(t)\\ &= d_{bm}(t)c_{an}(t)+c_{am+1}(t)c_{bm}(t)\\ &= d_{bm}(t)c_a(tc_n(t)+2d_n(t))\cdot c_n(t)\\ &\quad\quad+c_{an+1}(t)c_b(tc_m(t)+2d_m(t))\cdot c_m(t).\end{aligned}\] This implies $(c_g(t))\subset (c_n(t),c_m(t))$ and we get the equality of the ideals.
\end{proof}

\subsubsection{Eigenvalues and $c_n(t)$} If we assume $M$ diagonalizable we can express $c_n(\mathrm{tr}(M))$ using the eigenvalues. Indeed, let $\mu_1\neq \mu_2$ be the eigenvalues of $M\neq \pm I$, then \[\begin{aligned}\begin{pmatrix} \mu_1 &0\\ 0 &\mu_2\end{pmatrix}^n &= c_n(\mu_1+\mu_2)\cdot \begin{pmatrix}\mu_1 &0\\ 0 &\mu_2\end{pmatrix} + d_n(\mu_1+\mu_2) \\ &= c_n(\mathrm{tr}(M))\cdot \begin{pmatrix}\mu_1 &0\\ 0 &\mu_2\end{pmatrix} + d_n(\mathrm{tr}(M)).\end{aligned}\] Therefore $\mu_i^n = \mu_i c_n(\mathrm{tr}(M)) + d_n(\mathrm{tr}(M))$ for $i=1,2$ and \[c_n(\mathrm{tr}(M)) = \frac{\mu_1^n-\mu_2^n}{\mu_1-\mu_2}.\]

\subsubsection{Relation to Chebyshev polynomials}

Write $T_m$ (resp.\ $U_m$) to denote the Chebyshev polynomials of the first (resp. second) kind \cite{Chebyshev}. Then, using the definition, it is easy to see that \[tc_n(t)+2d_n(t)=2T_n(t/2),\quad\text{ and }\quad c_n(t)=U_{n-1}(t/2).\] In fact, this description would give all the identities stated above. We prove them here because our arguments are self contained (e.g.\ compare with \cite{Chebyshev}).

\section{Irreducible $\SL_2$-representations of $\Gamma_{\ell^k}$} \label{section:gamma_ell_reps}

For any prime $\ell$ and any positive integer $k$ we consider the finitely presented group $\Gamma_{\ell^k} := \langle a, b\ |\ a^{\ell^k(\ell^k-1)}ba^{-\ell^k}b^{-2}\rangle$.

\subsection{The $\SL_2$-character variety of $\Gamma_{\ell^k}$}

Recall that the canonical embedding $M^\genirr(\Gamma_{\ell^k},\mathrm{SL}_2) \subset \A^3_{x,y,z}$ says that an irreducible representation is determined uniquely up to conjugacy by the three traces $x,y,z$.

For $\ell^k = 2$, the following appeared in \cite{BreLetter, BBV22}. We give a uniform argument using notation developed in the previous section.

\begin{prop}
Let $k$ be a positive integer. For $\ell=2$ the scheme $M^\genirr(\Gamma_{2^k},\mathrm{SL}_2)\subset \A^3_{x,y,z}$ has exactly $k$ irreducible components (irreducible over $\Spec{\Z}$ i.e.\ $\Q$-Galois orbits of the geometric generic fiber). The first $k-1$ components are \[(M_{10,i}^\circ):\left\{\begin{aligned}&\mathrm{ord}\ \rho(a)=2^i,\\ &y-1=0,\\ &zc_{2^k}(x)-yc_{2^k+1}(x)=0,\end{aligned}\right.\] for $i=k+2,\dots,2k$, and the last one is \[(M_{9}^\circ):\left\{\begin{aligned}&\mathrm{ord}\ \rho(a)=2^{2k+1},\\ &y+1=0,\\ &zc_{2^k}(x)-yc_{2^k+1}(x)=0.\end{aligned}\right.\]
For odd $\ell$ the scheme $M^\genirr(\Gamma_{\ell^k},\mathrm{SL}_2)\subset \A^3_{x,y,z}$ has exactly $2k$ irreducible components. They  are \[ (M_{10,i}^\circ):\left\{\begin{aligned}&\mathrm{ord}\ \rho(a)=\ell^i,\\ &y-1=0,\\ &zc_{\ell^k}(x)-yc_{\ell^k+1}(x)=0,\end{aligned}\right.\text{ and }(M_{9,i}^\circ):\left\{\begin{aligned}&\mathrm{ord}\ \rho(a)=2\ell^i,\\ &y+1=0,\\ &zc_{\ell^k}(x)-yc_{\ell^k+1}(x)=0,\end{aligned}\right.\] for $i=k+1,\dots,2k$.
\end{prop}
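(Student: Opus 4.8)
\section*{Proof proposal}

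The plan is to determine the absolutely irreducible representations $\rho:\Gamma_{\ell^k}\to\SL_2(\ol{\Q})$ directly, since the geometric points of $M^{\genirr}(\Gamma_{\ell^k},\SL_2)$ over $\Q$ are exactly the $\ol{\Q}$-points of $M^{\irr}$, and then to organize these points into $\mathrm{Gal}(\ol{\Q}/\Q)$-orbits. Write $N=\ell^k$, $A=\rho(a)$, $B=\rho(b)$, so the relation reads $A^{N(N-1)}BA^{-N}=B^2$. First I would note that $A\neq\pm I$: if $A=\pm I$ the relation degenerates to $B^2=\pm B$, forcing $B$ scalar and contradicting irreducibility. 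Hence $A$ is diagonalizable with distinct eigenvalues $\alpha,\alpha^{-1}$ ($\alpha\neq\pm1$), and after conjugating I may take $A=\mathrm{diag}(\alpha,\alpha^{-1})$ and $B=\left(\begin{smallmatrix}p&q\\r&s\end{smallmatrix}\right)$.

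The core step is to expand the relation entrywise:
\[ A^{N(N-1)}BA^{-N}=\begin{pmatrix}\alpha^{N^2-2N}p & \alpha^{N^2}q\\ \alpha^{-N^2}r & \alpha^{-(N^2-2N)}s\end{pmatrix}=B^2=\begin{pmatrix}p^2+qr & q(p+s)\\ r(p+s) & qr+s^2\end{pmatrix}. \]
Irreducibility forces $q,r\neq0$ (otherwise $A,B$ share an eigenvector), so comparing off-diagonal entries gives $\alpha^{N^2}=\alpha^{-N^2}=y$ with $y=\tr{B}=p+s$; thus $\alpha^{2N^2}=1$ and $y=\alpha^{N^2}\in\{\pm1\}$, which is the equation $y=\pm1$. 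Comparing diagonal entries and using $\det{B}=ps-qr=1$ yields $p\,(y-\alpha^{N^2-2N})=1$ and $s\,(y-\alpha^{-(N^2-2N)})=1$; substituting $\alpha^{N^2}=y$ these become $p=(y(1-\alpha^{-2N}))^{-1}$ and $s=(y(1-\alpha^{2N}))^{-1}$, which requires $\alpha^{2N}\neq1$. Then $z=\tr{AB}=\alpha p+\alpha^{-1}s$ simplifies, via the eigenvalue identity $c_n(x)=(\alpha^n-\alpha^{-n})/(\alpha-\alpha^{-1})$ from \S\ref{section:polynomials}, to $z=y\,c_{N+1}(x)/c_N(x)$, i.e.\ $zc_N(x)-yc_{N+1}(x)=0$ (and $c_N(x)\neq0$ since $\alpha^{2N}\neq1$). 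Conversely, for any $\alpha,y$ obeying these constraints the matrices above define a representation satisfying the relation by construction, irreducible because $qr=ps-1\neq0$, and unique up to conjugacy by Fricke--Vogt.

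Next I would translate the constraints on $\alpha$ into the order conditions with $m=\mathrm{ord}\,A=\mathrm{ord}\,\alpha$. The requirements are $\alpha^{N^2}=y$, $\alpha\neq\pm1$, and $\alpha^{2N}\neq1$; the remaining irreducibility condition $qr\neq0$ is automatic, since $qr=\big((\alpha^{2N}+\alpha^{-2N})-1\big)/\big(2-(\alpha^{2N}+\alpha^{-2N})\big)$ vanishes only when $\alpha^{2N}$ is a primitive sixth root of unity, whereas $\alpha^{2N}$ always has $\ell$-power order. With $N=\ell^k$ the divisibility bookkeeping then gives exactly the stated lists: for $y=1$ we need $m\mid\ell^{2k}$, $m\nmid2\ell^k$, $m>2$, yielding $m=\ell^i$ with $k+1\le i\le2k$ for odd $\ell$ and $m=2^i$ with $k+2\le i\le2k$ for $\ell=2$; for $y=-1$ we need $m\mid2\ell^{2k}$, $m\nmid\ell^{2k}$, $m\nmid2\ell^k$, giving $m=2\ell^i$ with $k+1\le i\le2k$ for odd $\ell$, while for $\ell=2$ the equation $\alpha^{2^{2k}}=-1$ forces the single value $m=2^{2k+1}$. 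This reproduces both the component list and the counts ($2k$ for odd $\ell$, and $(k-1)+1=k$ for $\ell=2$).

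Finally I would assemble the scheme-theoretic statement. For a fixed pair (sign of $y$, order $m$) the admissible $\alpha$ are precisely the primitive $m$-th roots of unity, so $x=\alpha+\alpha^{-1}$ ranges over all conjugates of $2\cos(2\pi/m)$; since the minimal polynomial of $2\cos(2\pi/m)$ is irreducible over $\Q$ (the real subfield $\Q(\zeta_m)^+$) and $y,z\in\Q(x)$ are determined by $x$, these points form a single Galois orbit, hence one irreducible component over $\Spec{\Z}$, with distinct $(m,\text{sign})$ giving distinct components (both are recovered from the point). That the three displayed conditions cut these out with reduced structure and exhaust $M^{\genirr}$ follows from Proposition~\ref{prop:explicit_equations}, the displayed polynomials being the reductions of the basis coefficients of $w=a^{N(N-1)}ba^{-N}b^{-2}$ in $1,a,b,ab$, computed through $c_n,d_n$. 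The main obstacle is precisely this transfer: verifying that the computation, transparent over $\ol{\Q}$, is faithfully captured over $\Z$ by the elimination ideal of Proposition~\ref{prop:explicit_equations}---in particular that the excluded locus $x=\pm2$ (non-diagonalizable or central $A$) contributes no irreducible representations, so no spurious or embedded components appear---together with keeping the $\ell=2$ versus odd-$\ell$ divisibility casework straight.
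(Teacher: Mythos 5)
Your proposal takes a genuinely different route from the paper's. You classify the irreducible representations by diagonalizing $\rho(a)$ and solving the relation $A^{N(N-1)}BA^{-N}=B^2$ entrywise for $B$; the paper never touches matrix entries, instead expanding both sides of $A^{\ell^k(\ell^k-1)}B=B^2A^{\ell^k}$ in the basis $1,a,b,ab$ of $H[\pi]$ over $TH[\pi]$ (Propositions~\ref{prop:red_irreducible} and~\ref{prop:basis}) and comparing coefficients, which yields $(y^2-1)c_{\ell^k}(x)=0$, $y(z-xy)c_{\ell^k}(x)-d_{\ell^k}(x)=0$, and the two equations forcing $A^{\ell^{2k}}=yI$; the order bookkeeping afterwards agrees exactly with yours, and I have checked your divisibility casework for both $\ell=2$ and odd $\ell$ — it is correct. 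Your converse direction is in one respect \emph{more} self-contained than the paper's: you exhibit an explicit irreducible $B$ by checking $qr=\bigl((\alpha^{2N}+\alpha^{-2N})-1\bigr)/\bigl(2-(\alpha^{2N}+\alpha^{-2N})\bigr)\neq 0$, with the primitive-sixth-root obstruction ruled out because $\alpha^{2N}$ has $\ell$-power order, whereas the paper defers the verification that $\red\neq 0$ at these points to the valuation computation of Proposition~\ref{prop:integrality_failure} (where $\hat\nu_\ell(z)\in(-1,0)$ forces $\red(x,y,z)\neq0$ since $\red$ is monic in $z$ over $\Z[x,y]$). Your computation of $z=y\,c_{N+1}(x)/c_N(x)$ from $p,s$ is also correct.

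There is, however, one genuine gap, which you flag at the end but never close: the inference ``$A\neq\pm I$, hence $A$ is diagonalizable with distinct eigenvalues'' is false as stated, since $A$ could be $\pm$ a nontrivial Jordan block (the locus $x=\pm2$), and your entrywise analysis silently excludes it, so as written you have not shown your list exhausts $M^{\genirr}$. The paper avoids this structurally: the coefficient comparison in $H[\pi]$ applies to \emph{every} absolutely irreducible representation, and diagonalizability is \emph{deduced} from $A^{\ell^{2k}}=yI$ rather than assumed. Your gap is fillable in a few lines and you should include the argument: take $A=\epsilon\left(\begin{smallmatrix}1&u\\0&1\end{smallmatrix}\right)$ with $u\neq0$, $\epsilon=\pm1$; irreducibility forces $r\neq0$ in $B=\left(\begin{smallmatrix}p&q\\r&s\end{smallmatrix}\right)$ since $\mathrm{span}(e_1)$ is the unique $A$-invariant line; the $(2,1)$ entry of the relation gives $p+s=\epsilon^N$, and subtracting the two diagonal entries gives $\epsilon^N\bigl(p-s+(N(N-1)+N)ur\bigr)=(p-s)\epsilon^N$, i.e.\ $N^2ur=0$, a contradiction. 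With this case supplied (or by adopting the paper's basis-expansion, which renders it unnecessary), your argument is correct and complete, and its scheme-theoretic assembly via Proposition~\ref{prop:explicit_equations} and Galois orbits operates at the same level of care as the paper's own passage from $N_\Q$ to the scheme-theoretic image.
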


\begin{rmk}
If $\ell^k=2$ there are no $\Q$-Galois orbits $M_{10,i}^\circ$ since $k+2=3>2=2k$.
\end{rmk}

\begin{proof}
Let $N$ be the scheme explicitly described in the theorem statement. We will show that $M^\irr(\Gamma_{\ell^k},\mathrm{SL}_2)_\Q\subset N$ with equality over $\Q$.

Let $K$ be a field of characteristic zero. A group homomorphism $\rho:\Gamma_{\ell^k}\to\mathrm{SL}_2(K)$ is described by two matrices $a\mapsto A,b\mapsto B$ such that $A^{\ell^k(\ell^k-1)}BA^{-\ell^k}B^{-2}=I$. This is equivalent to the condition $A^{\ell^k(\ell^k-1)}B=B^2A^{\ell^k}$. Assuming $\rho$ is absolutely irreducible we can write both sides in terms of $I,A,B$ and $AB$ using Cayley--Hamilton and compare component wise: \[\begin{aligned}c_{\ell^k(\ell^k-1)}(x)\cdot AB+d_{\ell^k(\ell^k-1)}(x)\cdot B = &-yc_{\ell^k}(x)\cdot AB\\ &+y(xc_{\ell^k}(x)+d_{\ell^k}(x))\cdot B\\ &+(y^2-1)c_{\ell^k}(x)\cdot A\\ &+(y(z-xy)c_{\ell^k}(x)-d_{\ell^k}(x))\cdot I. \end{aligned}\] Therefore, \[\begin{aligned}y(z-xy)c_{\ell^k}(x)-d_{\ell^k}(x)&=0,\\ (y^2-1)c_{\ell^k}(x)&=0,\\ yc_{\ell^k+1}(x)&=d_{\ell^k(\ell^k-1)}(x),\\ -yc_{\ell^k}(x)&= c_{\ell^k(\ell^k-1)}(x).\end{aligned}\] Since $c_{\ell^k}(x)$ and $d_{\ell^k}(x)$ do not vanish simultaneously, we find using the first two equations that $c_{\ell^k}(x)\neq0, y^2-1=0$ and $zc_{\ell^k}(x)-yc_{\ell^k+1}(x)=0$. The last two equations imply the following matrix equations and the reverse implication holds as long as $\rho$ is irreducible \[\begin{aligned}A^{\ell^k(\ell^k-1)}&=c_{\ell^k(\ell^k-1)}(x)\cdot A + d_{\ell^k(\ell^k-1)}(x)\cdot I\\ &= y\cdot(-c_{\ell^k}(x)\cdot A+c_{\ell^k+1}(x)\cdot I)\\ &=y\cdot(c_{-\ell^k}(x)\cdot A+d_{-\ell^k}(x)\cdot I)\\ &= y\cdot A^{-\ell^k},\end{aligned}\] that is, $A^{\ell^{2k}}=y\cdot I$.

Because we may assume $\rho$ absolutely irreducible, $A\neq\pm I$ and since $c_{\ell^k}(x)\neq 0$ this implies $A^{\ell^k}\neq\pm I$. In particular, $A^{\ell^i}\neq\pm I$ for any $i=0,\dots,k$. Since the only $\mathrm{SL}_2$ matrices of order $2$ are $\pm I$, we deduce that $A^{2\ell^i}\neq I$ for any $i\in\{0,\dots,k\}$. Consider the following two cases:

\begin{enumerate}
\item If $\ell=2$ we see that $A^{2^{k+1}}\neq I$. Since $A^{2^{2k}}=y\cdot I$, the order of $A$ is a $2$-power $2^i$ with $i\in\{k+2,\dots,2k+1\}$. Observe that if $y-1=0$ then $i\leq 2k$. Also note that if $y+1=0$ then $A^{2^{2k}}=-I$ and this implies that the order of $A$ can not be $\leq2^{2k}$ since $2$ is even, so $\mathrm{ord}\ A=2^{2k+1}$.
\item For odd $\ell$, if $y-1=0$ then $A^{\ell^{2k}}=I$ so $\mathrm{ord}\ A$ is an $\ell$-power $\ell^i$ with $i\in\{k+1,\dots,2k\}$. If $y+1=0$ then $A^{\ell^{2k}}=-I$, in particular $A^{\ell^i}\neq I$ for every $i\in\{0,\dots,2k\}$. Hence the order of $A$ is not an $\ell$-power, so it must be $2$ times an $\ell$-power.
\end{enumerate}

This discussion shows that $M^\irr(\Gamma_{\ell^k},\mathrm{SL}_2)_\Q\subset\A^3$ is contained in the scheme cut out by the above equations. To show equality on the fiber over $\Q$, we must demonstrate that $\red$ is nonzero at each point. Since Proposition~\ref{prop:integrality_failure} only relies on the above inclusion, we defer a complete argument until we have performed those calculations. In the proof we show that for any $(x,y,z)$ satisfying the polynomial conditions in the hypothesis, $x,y$ are $\ell$-adic integral and $z$ has negative $\ell$-adic valuation. Therefore $\red(x,y,z)=x^2+y^2+z^2-xyz-4$ is nonzero, otherwise $z$ would be $\ell$-adic integral. Therefore, $(x,y,z)\in M^\irr(\Gamma_{\ell^k},\mathrm{SL}_2)$. Moreover, the equations \[c_{2\ell^{2k}}(x)=0, y^2-1=0, zc_{\ell^k}(x)-yc_{\ell^k+1}(x)=0, c_{\ell^k}(x)\neq 0\] have finite support (since $c_{\ell^k}(x)\neq0$ fixes the value of $z$ given the values $x,y$ which themselves are fixed by the first two equations). Hence $M^\irr(\Gamma_{\ell^k},\mathrm{SL}_2)_\Q\subset N$ with equality after taking the fiber over $\Q$. Since $N_\Q\embed N\subset \A^3_\Z$ presents $N$ as the scheme theoretic image of $N_\Q\to\A^3_\Z$ we conclude that $N=M^\genirr(\Gamma_{\ell^k},\mathrm{SL}_2)$ as subschemes of $\A^3_\Z$.
\end{proof}

\subsection{Failure of weak integrality for $\Gamma_{\ell^k}$}

Write $\zeta_r=\exp(2\pi i/r)$ for the standard $r$-th root of unity. Let $\hat\nu_\ell$ be the unique $\ell$-adic valuation of $\Q(\zeta_{2\ell^{2k}})$ extending the $\ell$-adic valuation $\nu_\ell$ of $\Q$. Since it is unique, it is given by the formula \[\hat\nu_{\ell,k}(x) = \frac{1}{[\Q(\zeta_{2\ell^{2k}}):\Q]}\cdot\nu_\ell\left(\mathrm{Nm}_{\Q(\zeta_{2\ell^{2k}})/\Q}(x)\right).\] If $\ell$ is odd then $\Q(\zeta_{2\ell^{2k}})=\Q(\zeta_{\ell^{2k}})$ and $[\Q(\zeta_{2\ell^{2k}}):\Q]=\ell^{2k-1}(\ell-1)$. If $\ell=2$ then $[\Q(\zeta_{2^{2k+1}}):\Q]=2^{2k}$.

\begin{prop}\label{prop:integrality_failure}
$\Gamma_{\ell^k}$ is weakly integral for $(2,1)$ (meaning with respect to $\mathrm{SL}_2$-valued representations) at $\ell'$ if and only if $\ell'\neq\ell$.
\end{prop}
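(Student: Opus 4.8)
The plan is to translate weak integrality for $(2,1)$ at a prime $\ell'$ into the existence of an $\ol{\Z}_{\ell'}$-point of $M^{\irr}(\Gamma_{\ell^k},\SL_2)$ and then to decide this using the explicit coordinates produced above. By the reconstruction result \cite[Proposition 4.3]{brumfiel} together with Proposition~\ref{prop:red_irreducible}, a triple $(x,y,z)\in\ol{\Z}_{\ell'}^{3}$ satisfying the defining equations with $\red\neq0$ gives an absolutely irreducible $\rho:\Gamma_{\ell^k}\to\SL_2(\cO_{L'})$ for some finite $L'/\Q_{\ell'}$, hence witnesses weak integrality; conversely $z=\tr\rho(ab)$ must be $\ell'$-integral for any integral representation. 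So the question reduces to the $\ell'$-adic integrality of $z$ on the components found above, where $y=\pm1$, $x=\zeta+\zeta^{-1}$ for $\zeta$ a root of unity of order $\ell^i$ or $2\ell^i$ with $i>k$, and $z=y\,c_{\ell^k+1}(x)/c_{\ell^k}(x)$. The essential tool is the eigenvalue formula $c_n(x)=(\zeta^n-\zeta^{-n})/(\zeta-\zeta^{-1})$, which lets me write
\[ z=y\,\zeta^{-1}\,\frac{1-\zeta^{2(\ell^k+1)}}{1-\zeta^{2\ell^k}}, \]
reducing everything to valuations of cyclotomic differences $1-\omega$.

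For the direction $\ell'\neq\ell$ I would exhibit a point on any of the listed components and check it is $\ell'$-integral. Here $x$ is an algebraic integer and $y=\pm1$ is a unit, so it suffices to prove $z$ is an $\ell'$-adic unit. I will use that for a nontrivial root of unity $\omega$ the element $1-\omega$ is a unit at every prime not dividing $\mathrm{ord}(\omega)$, i.e.\ it fails to be a unit at $\ell'$ only when $\mathrm{ord}(\omega)$ is a power of $\ell'$. The key point is that both $\zeta^{2\ell^k}$ and $\zeta^{2(\ell^k+1)}$ have order divisible by $\ell$: since $v_\ell(\mathrm{ord}\,\zeta)=i>k$ and $\gcd(\ell,2(\ell^k+1))=1$, multiplying the exponent by $2\ell^k$ (resp.\ $2(\ell^k+1)$) cannot remove the full $\ell$-part. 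Consequently, for any $\ell'\neq\ell$ neither order is a power of $\ell'$, both numerator and denominator are $\ell'$-units, and $z$ is a unit. Hence $(x,y,z)\in\ol{\Z}_{\ell'}^{3}$, and reconstruction yields weak integrality at $\ell'$.

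For the direction $\ell'=\ell$ I would instead compute the valuation and find it negative. Since $\ell$ is totally ramified in $\Q(\zeta_{2\ell^{2k}})$ there is a unique extension $\hat\nu_{\ell,k}$ of $\nu_\ell$, and $\hat\nu_{\ell,k}(1-\omega)=1/\varphi(\ell^{a})$ whenever $\omega$ has order $\ell^{a}$. From the displayed formula and the fact that $\zeta^{2(\ell^k+1)}$ has order with $\ell$-part $\ell^{i}$ while $\zeta^{2\ell^k}$ has order $\ell^{\,i-k}$,
\[ \hat\nu_{\ell,k}(z)=\hat\nu_{\ell,k}\!\big(1-\zeta^{2(\ell^k+1)}\big)-\hat\nu_{\ell,k}\!\big(1-\zeta^{2\ell^k}\big)=\frac{1}{\varphi(\ell^{\,i})}-\frac{1}{\varphi(\ell^{\,i-k})}<0, \]
because $i>i-k$. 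As the valuation over $\ell$ is unique, $z$ has negative valuation under every embedding $\ol{\Q}\hookrightarrow\ol{\Q}_\ell$, so no point of $M^{\irr}(\Gamma_{\ell^k},\SL_2)$ is $\ell$-integral and weak integrality fails at $\ell$. This computation simultaneously discharges the promise from the previous proof that $z$ has negative $\ell$-adic valuation, whence $\red\neq0$ at every such triple.

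The main obstacle I anticipate is the uniform valuation bookkeeping across the several cases rather than any single hard idea: I must determine the exact order of $\zeta^{2\ell^k}$ and $\zeta^{2(\ell^k+1)}$ for $\ell$ odd and for $\ell=2$, and for both $y=+1$ (order $\ell^{i}$) and $y=-1$ (order $2\ell^{i}$), and in particular handle the auxiliary prime $\ell'=2$ when $\ell$ is odd, where $\zeta$ may have even order. The saving feature in every case is that the hypothesis $i>k$ forces $\ell$ to divide both relevant orders, which is exactly what separates the excluded prime $\ell$ (where this $\ell$-part ramifies the denominator and makes $z$ non-integral) from every other prime (where divisibility by $\ell$ makes the orders non-$\ell'$-powers, so $z$ is a unit).
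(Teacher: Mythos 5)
Your proposal is correct and follows essentially the same route as the paper: both reduce weak integrality to the $\ell'$-integrality of $z = y\,c_{\ell^k+1}(x)/c_{\ell^k}(x)$ on the explicit components, rewrite $z$ via the eigenvalues of $\rho(a)$ as a ratio of cyclotomic differences $1-\omega$, and then show $z$ is an $\ell'$-unit for $\ell'\neq\ell$ (the paper via $\mathrm{Nm}_{\Q(\zeta_{\ell^i})/\Q}(1-\zeta_{\ell^i}^m)=\Phi_{\ell^i}(1)=\ell$) while $\hat\nu_\ell(z)=\frac{1-\ell^k}{\ell^{i-1}(\ell-1)}<0$ at $\ell$ (your ramification-index computation gives the same number the paper obtains through norms). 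Just carry out the $\ell=2$ bookkeeping you flagged: your claim $\gcd(\ell,2(\ell^k+1))=1$ fails for $\ell=2$, and there the relevant orders are $2^{i-1}$ and $2^{i-k-1}$ with $i\geq k+2$, yielding $\hat\nu_2(z)=(1-2^k)/2^{i-2}$, exactly as in the paper.
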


\begin{proof}
Let $(x,y,z)$ be a $K$-valued point of $M^\genirr(\Gamma_{\ell^k},\mathrm{SL}_2)$ for some field $K$ of characteristic zero. 
\begin{enumerate}
\item Let $\ell=2$. The order of $\rho(a)$ being always finite implies $\rho(a)$ is diagonalizable with eigenvalues $\zeta_{2^i}^m, \zeta_{2^i}^{-m}$ for some odd $m$ and $i\in \{k+2,\dots,2k+1\}$. Hence $x=\zeta_{2^i}^m+\zeta_{2^i}^{-m}$, which is a real algebraic integer.

Furthermore $z=yc_{\ell^k+1}(x)/c_{\ell^k}(x)$ with $y^2-1=0$, so $x,y,z\in \Q(\zeta_{2^{2k+1}})$ and $(x,y,z)$ is actually a $\Q(\zeta_{2^{2k+1}})$-valued point. Writing $z$ using the eigenvalues of $\rho(a)$ we find \[z=y\frac{\zeta_{2^i}^{m(2^k+1)}-\zeta_{2^i}^{-m(2^k+1)}}{\zeta_{2^i}^{2^km}-\zeta_{2^i}^{-2^km}}=\frac{y}{\zeta_{2^i}^{m}}\frac{1-\zeta_{2^{i-1}}^{m(2^k+1)}}{1-\zeta_{2^{i-k-1}}^{m}}.\] Therefore \[\begin{aligned}\hat\nu_2(z) &= \frac{1}{2^{2k}}\nu_2\left(\frac{\mathrm{Nm}_{\Q(\zeta_{2^{i-1}})/\Q}\big(1-\zeta_{2^{i-1}}^{m(2^k+1)}\big)^{[\Q(\zeta_{2^{2k+1}}):\Q(\zeta_{2^{i-1}})]}}{\mathrm{Nm}_{\Q(\zeta_{2^{i-k-1}})/\Q}\big(1-\zeta_{2^{i-k-1}}^m\big)^{[\Q(\zeta_{2^{2k+1}}):\Q(\zeta_{2^{i-k-1}})]}}\right)\\ &= \frac{\nu_2(2^{2^{2k+2-i}})-\nu_2(2^{2^{3k+2-i}})}{2^{2k}} = \frac{1-2^k}{2^{i-2}}\in(-1,0).\end{aligned}\]

\item Let $\ell\neq2$. Since $\rho(a)$ has finite order, it is diagonalizable. We split the argument according to wether $y-1=0$ or $y+1=0$.

If $y-1=0$, then the eigenvalues of $\rho(a)$ are $\zeta_{\ell^i}^m,\zeta_{\ell^i}^{-m}$ for some $m$ prime to $\ell$ and $i\in\{k+1,\dots,2k\}$. Then $x=\zeta_{\ell^i}^m+\zeta_{\ell^i}^{-m}$, which is a real algebraic integer. Write $z$ using the eigenvalues of $\rho(a)$ to obtain the expression \[z=\frac{1}{\zeta_{\ell^i}^{m}}\frac{1-\zeta_{\ell^i}^{2m(\ell^k+1)}}{1-\zeta_{\ell^{i-k}}^{2m}}.\]

If $y+1=0$, the eigenvalues of $\rho(a)$ are $\zeta_{2\ell^i}^m,\zeta_{2\ell^i}^{-m}$ for some $m$ prime to $2\ell$ and $i\in\{k+1,\dots,2k\}$. Then $x=\zeta_{2\ell^i}^m+\zeta_{2\ell^i}^{-m}$, which is a real algebraic integer. As before, we can express $z$ by \[z=\frac{-1}{\zeta_{2\ell^i}^m}\frac{1-\zeta_{\ell^i}^{m(\ell^k+1)}}{1-\zeta_{\ell^{i-k}}^{m}}.\]

In both cases, $x,y,z\in\Q(\zeta_{2\ell^{2k}})$ so $(x,y,z)$ is a $\Q(\zeta_{2\ell^{2k}})$-valued point. Also, observe that for a fixed $i$, the $\ell$-adic valuation of $z$ is the same independently of the value of $y$. Without loss of generality assume $y-1=0$. It follows that \[\begin{aligned}\hat\nu_\ell(z)&= \frac{1}{\ell^{2k-1}(\ell-1)}\nu_\ell\left(\frac{\mathrm{Nm}_{\Q(\zeta_{\ell^{i}})/\Q}\big(1-\zeta_{\ell^i}^{2m(\ell^k+1)}\big)^{[\Q(\zeta_{\ell^{2k}}):\Q(\zeta_{\ell^i})]}}{\mathrm{Nm}_{\Q(\zeta_{\ell^{i-k}})/\Q}\big(1-\zeta_{\ell^{i-k}}^{2m}\big)^{[\Q(\zeta_{\ell^{2k}}):\Q(\zeta_{\ell^{i-k}})]}}\right)\\ &= \frac{\nu_\ell(\ell^{\ell^{2k-i}})-\nu_\ell(\ell^{\ell^{3k-i}})}{\ell^{2k-1}(\ell-1)}=\frac{1-\ell^k}{\ell^{i-1}(\ell-1)}\in(-1,0).\end{aligned}\]
\end{enumerate}

Now for $\ell'\neq\ell$ we show that $\Gamma_{\ell^k}$ is weakly integral. Both $x$ and $y$, the traces of $\rho(a)$ and $\rho(b)$, are integral and in fact lie in $\Z[\zeta_{2\ell^{2k}}]$. It suffices to consider $z=\tr(\rho(ab))$. Up to units, we can write \[z=\frac{1-\zeta_{\ell^i}^{m(\ell^k+1)}}{1-\zeta_{\ell^{i-k}}^{m}}\] for some $i>k$ and $m$ prime to $\ell$. For $m$ prime to $\ell$, \[\mathrm{Nm}_{\Q(\zeta_{\ell^i})/\Q}(1-\zeta_{\ell^i}^m)=\prod_{(j,\ell)=1} (1-\zeta_{\ell^i}^{jm}) = \Phi_{\ell^i}(1) = \ell\in\Z,\] and therefore $1-\zeta_{\ell^i}$ is a unit in any $\ell'$-adic local field (the sum of the valuations over $\nu_{\ell'}$ is zero and each is nonnegative). In fact, the same argument shows that $z$ is an $\ell'$-adic unit.
\end{proof}

\section{Character varieties of certain $2$-generated groups} \label{section:charvars}

In this section, we give a general computation of the character variety of a group of the form
\[ \pi = \langle a,b\ \vert\ a^{n_1}b^{m_1}a^{n_2}b^{m_2}\rangle. \]

Recall that ${\tt red} = x^2+y^2+z^2-xyz-4$ detects absolute irreducibility in $M(F_2, \SL_2) = \A^3_{x,y,z}$. The strategy is to find universal relations arising from writing 
\[ 1 = w(a,b) = a^{n_1}b^{m_1}a^{n_2}b^{m_2} = c_1 1 + c_a a + c_b b + c_{ab} ab \]
in terms of the basis $1, a, b, ab$ of $H[F_2]$ over $TH[F_2] = \Z[x,y,z]$ and obtain equations by comparing the expansions componentwise.

Although this procedure may produce traces of reducible representations, all absolutely irreducible representations of $\pi$ are captured in this way (by Proposition~\ref{prop:basis}). Using the polynomial relations
\[ M^n = c_n(\mathrm{tr}(M))\cdot M + d_n(\mathrm{tr}(M)) \]
we expand the word formally in terms of $x,y,z$ and the expressions $c_{n_1}(x), d_{n_1}(x), c_{n_2}(x), d_{n_2}(x), c_{m_1}(y), d_{m_1}(y), c_{m_2}(y), d_{m_2}(y)$.
For actual integer values of $n_1, m_1, n_2, m_2$ there are of course relations between these polynomials in $x$ and $y$. However, there are also universal relations arising only from the structure of the expansion. Choose new variables $\gamma_{n_i}, \gamma_{m_i}, \delta_{n_i}, \delta_{m_i}$ representing the formal value of $c_{n_i}, c_{m_i}, d_{n_i}, d_{m_i}$.
In the polynomial ring 
\[R:=\Z[x,y,z][\gamma_{n_1},\delta_{n_1},\gamma_{m_1},\delta_{m_1},\gamma_{n_2},\delta_{n_2},\gamma_{m_2},\delta_{m_2}] \] 
we can form the ideal
\[ I := (c_1 - 1, c_a, c_b, c_{ab}) \]
in $R$ and consider the structure of the scheme $\Spec{R/I}$.
\par
Because it produces shorter and more convenient expressions, we will actually rewrite the relation as 
\[ a^{n_2} b^{m_2} = b^{-m_1} a^{-n_1}. \]
Repeating the previous procedure, we obtain an equally valid but simplier form for the universal relations.  Expanding these expressions 
\begin{align*}
     b^{-m_1}a^{-n_1} &= r_{1} 1 + r_a a + r_b b + r_{ab} ab
     \\
     a^{n_2}b^{m_2} &= \ell_1 1+\ell_a a + \ell_b b + \ell_{ab} ab
\end{align*}
in terms of the basis $1$, $a$, $b$, $ab$ of $H[F_2]$ over $TH[F_2]=\Z[x,y,z]$ and identifying the coefficients we obtain the following equations: 
\[\left\{\begin{aligned}d_{n_2}(x)d_{m_2}(y)&=d_{n_1}(x)d_{m_1}(y)+xc_{n_1}(x)d_{m_1}(y)\\ &\quad\quad+yd_{n_1}(x)c_{m_1}(y)+zc_{n_1}(x)c_{m_1}(y),\\ c_{n_2}(x)d_{m_2}(y)&=-c_{n_1}(x)d_{m_1}(y),\\ d_{n_2}(x)c_{m_2}(y)&= -d_{n_1}(x)c_{m_1}(y),\\ c_{n_2}(x)c_{m_2}(y)&= -c_{n_1}(x)c_{m_1}(y).\end{aligned}\right.\]
As before, we substitute new abstract variables $\delta, \gamma$ and use these equations to from an ideal. However, there is another source of universal relations: the determinantal relations arising from equation~(\ref{eq:determinant_condition}). Including these defines an ideal
\[
 I :=(\ell_1 - r_1, \ell_a- r_a, \ell_b- r_b, \ell_{ab} - r_{ab}, \text{det's}). \]
Rearranging, we obtain the following presentation for $I$ which will be useful later:
 \begin{align*}I:=(&x\gamma_{n_1}\delta_{m_1}+y \delta_{n_1} \gamma_{m_1} + z\gamma_{n_1}\gamma_{m_1} + \delta_{n_1}\delta_{m_1}-\delta_{n_2}\delta_{m_2},
 \\ &\ \gamma_{n_1}\delta_{m_1}+\gamma_{n_2}\delta_{m_2}, 
 \\ &\ \delta_{n_1}\gamma_{m_1}+\delta_{n_2}\gamma_{m_2},
 \\ &\ \gamma_{n_1}\gamma_{m_1}+\gamma_{n_2}\gamma_{m_2},
 \\ &\ \gamma_{n_i}^2+x\gamma_{n_i}\delta_{n_i}+\delta_{n_i}^2-1\text{ for }i=1,2,
 \\ &\ \gamma_{m_i}^2+y\gamma_{m_i}\delta_{m_i}+\delta_{m_i}^2-1\text{ for }i=1,2).
 \end{align*}

The next lemma illustrates the sort of universal relations we are interested in:

\begin{lemma}\label{lemma:univ_rel} Let $I\subset R$ be the ideal introduced before. Then 
\[ \gamma_{n_1}^2-\gamma_{n_2}^2,\gamma_{m_1}^2-\gamma_{m_2}^2\in I. \]
\end{lemma}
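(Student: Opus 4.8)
The plan is to work entirely modulo $I$ and exploit the matched structure of the three ``coefficient'' generators
\[ \gamma_{n_1}\delta_{m_1}+\gamma_{n_2}\delta_{m_2}, \qquad \delta_{n_1}\gamma_{m_1}+\delta_{n_2}\gamma_{m_2}, \qquad \gamma_{n_1}\gamma_{m_1}+\gamma_{n_2}\gamma_{m_2}, \]
which yield the congruences $\gamma_{n_1}\delta_{m_1}\equiv-\gamma_{n_2}\delta_{m_2}$, $\delta_{n_1}\gamma_{m_1}\equiv-\delta_{n_2}\gamma_{m_2}$, and $\gamma_{n_1}\gamma_{m_1}\equiv-\gamma_{n_2}\gamma_{m_2}$ modulo $I$. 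For the first claim I will only need the first and third of these, together with the two determinantal generators $h_i:=\gamma_{m_i}^2+y\gamma_{m_i}\delta_{m_i}+\delta_{m_i}^2-1$ for $i=1,2$. The key elementary fact is that ideal membership is preserved under products of congruences: if $u\equiv u'$ and $v\equiv v'$ modulo $I$ then $uv\equiv u'v'$, since $uv-u'v'=u(v-v')+v'(u-u')\in I$. I will therefore square and multiply the relevant relations.

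Concretely, the first step is to square the first and third coefficient congruences and to multiply the first by the third, obtaining
\[ \gamma_{n_1}^2\delta_{m_1}^2\equiv\gamma_{n_2}^2\delta_{m_2}^2, \qquad \gamma_{n_1}^2\gamma_{m_1}^2\equiv\gamma_{n_2}^2\gamma_{m_2}^2, \qquad \gamma_{n_1}^2\gamma_{m_1}\delta_{m_1}\equiv\gamma_{n_2}^2\gamma_{m_2}\delta_{m_2} \pmod I. \]
These are exactly the three monomials in the $m$-variables that appear upon multiplying the determinantal relation $h_i$ by $\gamma_{n_i}^2$. The second step is to use $\gamma_{n_i}^2 h_i\in I$ to solve for $\gamma_{n_i}^2$ modulo $I$:
\[ \gamma_{n_i}^2 \equiv \gamma_{n_i}^2\gamma_{m_i}^2 + y\,\gamma_{n_i}^2\gamma_{m_i}\delta_{m_i} + \gamma_{n_i}^2\delta_{m_i}^2 \pmod I, \quad i=1,2. \]
Subtracting the $i=2$ congruence from the $i=1$ congruence, every term on the right-hand side cancels modulo $I$ by the three displayed product congruences, leaving $\gamma_{n_1}^2-\gamma_{n_2}^2\in I$.

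The claim for $\gamma_{m_1}^2-\gamma_{m_2}^2$ then follows by the evident symmetry of $I$ under exchanging $(n,x)\leftrightarrow(m,y)$, which swaps the first two coefficient generators, fixes the third, and swaps $g_i\leftrightarrow h_i$; the same argument then uses the second and third coefficient relations together with the $n$-determinantal generators. The computation is otherwise routine, so the only real content — and the step one must get right — is recognizing that multiplying the $m$-determinantal relation by $\gamma_{n_i}^2$ produces precisely the three cross-monomials that are controlled by the squared and mixed products of the coefficient relations, so that all error terms land in $I$; notably, neither the first generator nor the $n$-determinantal generators are needed for this half of the argument.
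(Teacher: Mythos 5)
Your proof is correct and is essentially the paper's argument in congruence form: you use exactly the same four generators ($\gamma_{n_1}\delta_{m_1}+\gamma_{n_2}\delta_{m_2}$, $\gamma_{n_1}\gamma_{m_1}+\gamma_{n_2}\gamma_{m_2}$, and the two $m$-determinantal relations), and your squared and mixed product congruences encode the same cancellation that the paper packages into a single explicit polynomial identity. Your closing symmetry remark likewise matches the paper's ``exchange $n$ by $m$ and $y$ by $x$'' step, so nothing further is needed.
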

\begin{proof}
Observe that \[\begin{aligned}\gamma_{n_1}^2-\gamma_{n_2}^2=&(\gamma_{n_1}\delta_{m_1}-\gamma_{n_2}\delta_{m_2}-y\gamma_{n_2}\delta_{m_2})(\gamma_{n_1}\delta_{m_1}+\gamma_{n_2}\delta_{m_2})\\ &\quad+(\gamma_{n_1}\gamma_{m_1}-\gamma_{n_2}\gamma_{m_2}+y\gamma_{n_1}\delta_{m_1})(\gamma_{n_1}\gamma_{m_1}+\gamma_{n_2}\gamma_{m_2})\\ &\quad-\gamma_{n_1}^2(\gamma_{m_1}^2+y\gamma_{m_1}\delta_{m_1}+\delta_{m_1}^2-1)\\ &\quad+\gamma_{n_2}^2(\gamma_{m_2}^2+y\gamma_{m_2}\delta_{m_2}+\delta_{m_2}^2-1).\end{aligned}\] Therefore \[\begin{aligned}\gamma_{n_1}^2-\gamma_{n_2}^2\in (&\gamma_{n_1}\delta_{m_1}+\gamma_{n_2}\delta_{m_2},\gamma_{n_1}\gamma_{m_1}+\gamma_{n_2}\gamma_{m_2},\\ &\gamma_{m_i}^2+y\gamma_{m_i}\delta_{m_i}+\delta_{m_i}^2-1\text{ for }i=1,2)\subset I.\end{aligned}\] The proof for $\gamma_{m_1}^2-\gamma_{m_2}^2$ is the same but exchanging $n$ by $m$ and $y$ by $x$ in the previous expressions.
\end{proof}

\begin{rmk}
Recall that, plugging in the trace expressions for the $\gamma, \delta$ variables, the traces of any irreducible representation of $\pi$ will give a point of $R/I$. However, we did use irreducibility in a crucial way because $I$ is defined by matching coefficients which requires freeness. For irreducible representations this is justified by Proposition~\ref{prop:basis} and Proposition~\ref{prop:red_irreducible}.
\par 
However, the universal relations may hold for some reducible points but it is not guaranteed. Therefore, it might seem natural to work in the ring $R[w]$ and include the generator $w \cdot \red - 1$ in the definition of $I$. However, it is easily checked that the elimination ideal
\[ (I + (w \cdot \red - 1)) \cap R = I  \]
equals $I$. Therefore, we do not obtain any additional universal relations by passing to $\red^{-1}$ first cannot later be obtained by shrinking to the open $M^{\irr} \subset M^{\genirr}$ defined as $D(\red)$. Note, this does not mean that the elimination ideal defining $M^{\irr}(\pi, \SL_2)$ in Proposition~\ref{prop:explicit_equations} is trivially given by simply removing $w \cdot \red - 1$. Indeed, it might be larger. It only means that, independent of the word $w$, there are no universal extra elements we can add by virtue of incorporating irreducibility. 
\end{rmk}

Using Lemma \ref{lemma:univ_rel} it is straightforward (but tedious) to obtain the primary decomposition of $I$. Recall that $R/I$ is reduced so this is nothing more than the irreducible components of $V(I)\subset \A^{11}$.

\subsection{Components}

The zero scheme of $I$ has 10 components which we analyze by computing the primary decomposition of $I$. In fact, $\Spec{R/I}$ is reduced hence these primary ideals are prime. The following table records the irreducible components of $V(I)$ defined by their prime ideals. This computation was performed by the computational algebra system M\textsc{AGMA} V2.28-3 \cite{MAGMA}.

\begin{center}
\begin{tabular}{p{.45\textwidth}p{.45\textwidth}}
Component $\wp_1:$ &Component $\wp_2:$\\
$\begin{aligned}(&\gamma_{n_1},\ \gamma_{n_2},\ \delta_{n_1}+1,\ \delta_{n_2}+1,\\ &\gamma_{m_1}+\gamma_{m_2},\ y\gamma_{m_1}+\delta_{m_1}-\delta_{m_2},\\ &\gamma_{m_1}^2+y\gamma_{m_1}\delta_{m_1}+\delta_{m_1}^2-1)\end{aligned}$ &$\begin{aligned}(&\gamma_{n_1},\ \gamma_{n_2},\ \delta_{n_1}+1,\ \delta_{n_2}-1,\\ &\gamma_{m_1}-\gamma_{m_2},\ y\gamma_{m_1}+\delta_{m_1}+\delta_{m_2},\\ &\gamma_{m_1}^2+y\gamma_{m_1}\delta_{m_1}+\delta_{m_1}^2-1)\end{aligned}$\\
\end{tabular}
\begin{tabular}{p{.45\textwidth}p{.45\textwidth}}
Component $\wp_3:$ &Component $\wp_4:$\\
$\begin{aligned}(&\gamma_{n_1},\ \gamma_{n_2},\ \delta_{n_1}-1,\ \delta_{n_2}-1,\\ &\gamma_{m_1}+\gamma_{m_2},\ y\gamma_{m_1}+\delta_{m_1}-\delta_{m_2},\\ &\gamma_{m_1}^2+y\gamma_{m_1}\delta_{m_1}+\delta_{m_1}^2-1)\end{aligned}$ &$\begin{aligned}(&\gamma_{n_1},\ \gamma_{n_2},\ \delta_{n_1}-1,\ \delta_{n_2}+1,\\ &\gamma_{m_1}-\gamma_{m_2},\ y\gamma_{m_1}+\delta_{m_1}+\delta_{m_2},\\ &\gamma_{m_1}^2+y\gamma_{m_1}\delta_{m_1}+\delta_{m_1}^2-1,)\end{aligned}$\\
\end{tabular}
\begin{tabular}{p{.45\textwidth}p{.45\textwidth}}
Component $\wp_5:$ &Component $\wp_6$:\\
$\begin{aligned}(&\gamma_{m_1},\ \gamma_{m_2},\ \delta_{m_1}+1,\ \delta_{m_2}+1,\\ &\gamma_{n_1}+\gamma_{n_2},\ x\gamma_{n_1}+\delta_{n_1}-\delta_{n_2},\\ &\gamma_{n_1}^2+x\gamma_{n_1}\delta_{n_1}+\delta_{n_1}^2-1,)\end{aligned}$ &$\begin{aligned}(&\gamma_{m_1},\ \gamma_{m_2},\ \delta_{m_1}+1,\ \delta_{m_2}-1,\\ &\gamma_{n_1}-\gamma_{n_2},\ x\gamma_{n_1}+\delta_{n_1}+\delta_{n_2},\\ &\gamma_{n_1}^2+x\gamma_{n_1}\delta_{n_1}+\delta_{n_1}^2-1,)\end{aligned}$\\
\end{tabular}
\begin{tabular}{p{.45\textwidth}p{.45\textwidth}}
Component $\wp_7:$ &Component $\wp_8:$\\
$\begin{aligned}(&\gamma_{m_1},\ \gamma_{m_2},\ \delta_{m_1}-1,\ \delta_{m_2}-1,\\ &\gamma_{n_1}+\gamma_{n_2},\ x\gamma_{n_1}+\delta_{n_1}-\delta_{n_2},\\ &\gamma_{n_1}^2+x\gamma_{n_1}\delta_{n_1}+\delta_{n_1}^2-1)\end{aligned}$ &$\begin{aligned}(&\gamma_{m_1},\ \gamma_{m_2},\ \delta_{m_1}-1,\ \delta_{m_2}+1,\\ &\gamma_{n_1}-\gamma_{n_2},\ x\gamma_{n_1}+\delta_{n_1}+\delta_{n_2},\\ &\gamma_{n_1}^2+x\gamma_{n_1}\delta_{n_1}+\delta_{n_1}^2-1)\end{aligned}$\\
\end{tabular}
\begin{tabular}{p{.45\textwidth}p{.45\textwidth}}
Component $\wp_9:$ &Component $\wp_{10}:$\\
$\begin{aligned}(&\gamma_{n_1}+\gamma_{n_2},\ \delta_{n_1}+\delta_{n_2},\\ &\gamma_{m_1}-\gamma_{m_2},\ \delta_{m_1}-\delta_{m_2},\\ &z\gamma_{n_1}\gamma_{m_1}+x\gamma_{n_1}\delta_{m_1}\\ &\quad+y\delta_{n_1}\gamma_{m_1}+2\delta_{n_1}\delta_{m_1},\\ &\gamma_{n_1}^2+x\gamma_{n_1}\delta_{n_1}+\delta_{n_1}^2-1,\\ &\gamma_{m_1}^2+y\gamma_{m_1}\delta_{m_1}+\delta_{m_1}^2-1)\end{aligned}$ &$\begin{aligned}(&\gamma_{n_1}-\gamma_{n_2},\ \delta_{n_1}-\delta_{n_2},\\ &\gamma_{m_1}+\gamma_{m_2},\ \delta_{m_1}+\delta_{m_2},\\ &z\gamma_{n_1}\gamma_{m_1}+x\gamma_{n_1}\delta_{m_1}\\ &\quad+y\delta_{n_1}\gamma_{m_1}+2\delta_{n_1}\delta_{m_1},\\ &\gamma_{n_1}^2+x\gamma_{n_1}\delta_{n_1}+\delta_{n_1}^2-1,\\ &\gamma_{m_1}^2+y\gamma_{m_1}\delta_{m_1}+\delta_{m_1}^2-1)\end{aligned}$
\end{tabular}
\end{center}

\bigskip

\begin{rmk} \label{rmk:constraining_z}
Observe that for $\wp_1,\dots,\wp_8$, no generator involves the variable $z$. For $\wp_9, \wp_{10}$ notice that $z$ appears in exactly one equation and it appears linearly with a coefficient of $\gamma_{n_1} \gamma_{m_1}$. Also note that $\gamma_{n_1}\gamma_{m_1}\in \wp_{1}\cap\cdots\cap\wp_{8}$, which implies that $z$ will be uniquely determined if and only if we work in any of the constructible sets $V(\wp_9)\setminus V(\prod_{i\neq9}\wp_i)$ or $V(\wp_{10})\setminus V(\prod_{i\neq10}\wp_i)$.
\end{rmk}

We describe the non trivial intersections between components:
\bigskip\\

\begin{tabular}{p{.45\textwidth}p{.45\textwidth}}
Intersection $\wp_1+\wp_5:$ &Intersection $\wp_1+\wp_7:$\\
$\begin{aligned}(&\gamma_{n_1},\ \gamma_{n_2},\ \delta_{n_1}+1,\ \delta_{n_2}+1,\\ &\gamma_{m_1},\ \gamma_{m_2},\ \delta_{m_1}+1,\ \delta_{m_2}+1)\end{aligned}$ &$\begin{aligned}(&\gamma_{n_1},\ \gamma_{n_2},\ \delta_{n_1}+1,\ \delta_{n_2}+1,\\ &\gamma_{m_1},\ \gamma_{m_2},\ \delta_{m_1}-1,\ \delta_{m_2}-1)\end{aligned}$
\end{tabular}

\begin{tabular}{p{.45\textwidth}p{.45\textwidth}}
Intersection $\wp_2+\wp_6:$ &Intersection $\wp_2+\wp_8:$\\
$\begin{aligned}(&\gamma_{n_1},\ \gamma_{n_2},\ \delta_{n_1}+1,\ \delta_{n_2}-1,\\ &\gamma_{m_1},\ \gamma_{m_2},\ \delta_{m_1}+1,\ \delta_{m_2}-1)\end{aligned}$ &$\begin{aligned}(&\gamma_{n_1},\ \gamma_{n_2},\ \delta_{n_1}+1,\ \delta_{n_2}-1,\\ &\gamma_{m_1},\ \gamma_{m_2},\ \delta_{m_1}-1,\ \delta_{m_2}+1)\end{aligned}$
\end{tabular}

\begin{tabular}{p{.45\textwidth}p{.45\textwidth}}
Intersection $\wp_3+\wp_5:$ &Intersection $\wp_3+\wp_7:$\\
$\begin{aligned}(&\gamma_{n_1},\ \gamma_{n_2},\ \delta_{n_1}-1,\ \delta_{n_2}-1,\\ &\gamma_{m_1},\ \gamma_{m_2},\ \delta_{m_1}+1,\ \delta_{m_2}+1)\end{aligned}$ &$\begin{aligned}(&\gamma_{n_1},\ \gamma_{n_2},\ \delta_{n_1}-1,\ \delta_{n_2}-1,\\ &\gamma_{m_1},\ \gamma_{m_2},\ \delta_{m_1}-1,\ \delta_{m_2}-1)\end{aligned}$
\end{tabular}

\begin{tabular}{p{.45\textwidth}p{.45\textwidth}}
Intersection $\wp_4+\wp_6:$ &Intersection $\wp_4+\wp_8:$\\
$\begin{aligned}(&\gamma_{n_1},\ \gamma_{n_2},\ \delta_{n_1}-1,\ \delta_{n_2}+1,\\ &\gamma_{m_1},\ \gamma_{m_2},\ \delta_{m_1}+1,\ \delta_{m_2}-1)\end{aligned}$ &$\begin{aligned}(&\gamma_{n_1},\ \gamma_{n_2},\ \delta_{n_1}-1,\ \delta_{n_2}+1,\\ &\gamma_{m_1},\ \gamma_{m_2},\ \delta_{m_1}-1,\ \delta_{m_2}+1)\end{aligned}$
\end{tabular}

\begin{tabular}{p{.45\textwidth}p{.45\textwidth}}
Intersection $\wp_2+\wp_9:$ &Intersection $\wp_4+\wp_9:$\\
$\begin{aligned}(&\gamma_{n_1},\ \gamma_{n_2},\ \delta_{n_1}+1,\ \delta_{n_2}-1,\\ &\gamma_{m_1}-\gamma_{m_2},\ \delta_{m_1}-\delta_{m_2},\\ &y\gamma_{m_1}+2\delta_{m_1},\\ &\gamma_{m_1}^2+y\gamma_{m_1}\delta_{m_1}+\delta_{m_1}^2-1)\end{aligned}$ &$\begin{aligned}(&\gamma_{n_1},\ \gamma_{n_2},\ \delta_{n_1}-1,\ \delta_{n_2}+1,\\ &\gamma_{m_1}-\gamma_{m_2},\ \delta_{m_1}-\delta_{m_2},\\ &y\gamma_{m_1}+2\delta_{m_1},\\ &\gamma_{m_1}^2+y\gamma_{m_1}\delta_{m_1}+\delta_{m_1}^2-1)\end{aligned}$
\end{tabular}

\begin{tabular}{p{.45\textwidth}p{.45\textwidth}}
Intersection $\wp_5+\wp_9:$ &Intersection $\wp_7+\wp_9:$\\
$\begin{aligned}(&\gamma_{n_1}+\gamma_{n_2},\ \delta_{n_1}+\delta_{n_2},\\ &\gamma_{m_1},\ \gamma_{m_2},\ \delta_{m_1}+1,\ \delta_{m_2}+1,\\ &x\gamma_{n_1}+2\delta_{n_1},\\ &\gamma_{n_1}^2+x\gamma_{n_1}\delta_{n_1}+\delta_{n_1}^2-1)\end{aligned}$ &$\begin{aligned}(&\gamma_{n_1}+\gamma_{n_2},\ \delta_{n_1}+\delta_{n_2},\\ &\gamma_{m_1},\ \gamma_{m_2},\ \delta_{m_1}-1,\ \delta_{m_2}-1,\\ &x\gamma_{n_1}+2\delta_{n_1},\\ &\gamma_{n_1}^2+x\gamma_{n_1}\delta_{n_1}+\delta_{n_1}^2-1)\end{aligned}$
\end{tabular}

\begin{tabular}{p{.45\textwidth}p{.45\textwidth}}
Intersection $\wp_1+\wp_{10}:$ &Intersection $\wp_3+\wp_{10}:$\\
$\begin{aligned}(&\gamma_{n_1},\ \gamma_{n_2},\ \delta_{n_1}+1,\ \delta_{n_2}+1,\\ &\gamma_{m_1}+\gamma_{m_2},\ \delta_{m_1}+\delta_{m_2},\\ &y\gamma_{m_1}+2\delta_{m_1},\\ &\gamma_{m_1}^2+y\gamma_{m_1}\delta_{m_1}+\delta_{m_1}^2-1)\end{aligned}$ &$\begin{aligned}(&\gamma_{n_1},\ \gamma_{n_2},\ \delta_{n_1}-1,\ \delta_{n_2}-1,\\ &\gamma_{m_1}+\gamma_{m_2},\ \delta_{m_1}+\delta_{m_2},\\ &y\gamma_{m_1}+2\delta_{m_1},\\ &\gamma_{m_1}^2+y\gamma_{m_1}\delta_{m_1}+\delta_{m_1}^2-1)\end{aligned}$
\end{tabular}

\begin{tabular}{p{.45\textwidth}p{.45\textwidth}}
Intersection $\wp_6+\wp_{10}:$ &Intersection $\wp_8+\wp_{10}:$\\
$\begin{aligned}(&\gamma_{n_1}-\gamma_{n_2},\ \delta_{n_1}-\delta_{n_2},\\ &\gamma_{m_1},\ \gamma_{m_2},\ \delta_{m_1}+1,\ \delta_{m_2}-1,\\ &x\gamma_{n_1}+2\delta_{n_1},\\ &\gamma_{n_1}^2+x\gamma_{n_1}\delta_{n_1}+\delta_{n_1}^2-1)\end{aligned}$ &$\begin{aligned}(&\gamma_{n_1}-\gamma_{n_2},\ \delta_{n_1}-\delta_{n_2},\\ &\gamma_{m_1},\ \gamma_{m_2},\ \delta_{m_1}-1,\ \delta_{m_2}+1,\\ &x\gamma_{n_1}+2\delta_{n_1},\\ &\gamma_{n_1}^2+x\gamma_{n_1}\delta_{n_1}+\delta_{n_1}^2-1)\end{aligned}$
\end{tabular}

We can visualize all this information using the following undirected graph, with one node per component and an edge between two nodes indicating non trivial intersection between the corresponding components:

\begin{center}
\begin{tikzpicture}
\node[draw, circle, minimum size=15pt] (p10) at (0,0) {$\wp_{10}$};
\node[draw, circle, minimum size=15pt] (p1) at (2,2) {$\wp_1$};
\node[draw, circle, minimum size=15pt] (p3) at (2,.6) {$\wp_3$};
\node[draw, circle, minimum size=15pt] (p6) at (2,-.6) {$\wp_6$};
\node[draw, circle, minimum size=15pt] (p8) at (2,-2) {$\wp_8$};
\node[draw, circle, minimum size=15pt] (p5) at (4,2) {$\wp_5$};
\node[draw, circle, minimum size=15pt] (p7) at (4,.6) {$\wp_7$};
\node[draw, circle, minimum size=15pt] (p2) at (4,-.6) {$\wp_2$};
\node[draw, circle, minimum size=15pt] (p4) at (4,-2)  {$\wp_4$};
\node[draw, circle, minimum size=15pt] (p9) at (6,0) {$\wp_9$};
\draw (p10) to [bend left] (p1);
\draw (p10) to (p3);
\draw (p10) to (p6);
\draw (p10) to [bend right] (p8);
\draw (p1) to (p5);
\draw (p1) to (p7);
\draw [line width=5pt, white] (p3) to (p5);
\draw (p3) to (p5);
\draw (p3) to (p7);
\draw (p6) to (p2);
\draw (p6) to (p4);
\draw [line width=5pt, white] (p8) to (p2);
\draw (p8) to (p2);
\draw (p8) to (p4);
\draw (p5) to [bend left](p9);
\draw (p7) to (p9);
\draw (p2) to (p9);
\draw (p4) to [bend right] (p9);
\end{tikzpicture}
\end{center}

\subsection{Components as spaces of matrices}

As alluded to before, for values of $(n_1,m_1,n_2,m_2)$ there is a $TH[F_2]$-algebra homomorphism $\phi : R \to TH[F_2] = \Z[x,y,z]$
defined by
\[
\begin{matrix}
\gamma_{n_i}\mapsto c_{n_i}(x), &\delta_{n_i}\mapsto d_{n_i}(x), &\gamma_{m_i}\mapsto c_{m_i}(y), &\delta_{m_i}\mapsto d_{m_i}(y)\end{matrix} 
\] 
for $i=1,2$.
By equation~(\ref{eq:determinant_condition}), this homomorphism factors through the map $R\to R/J$ where 
\[\begin{aligned}J:=(&\gamma_{n_i}^2+\delta_{n_i}^2+x\gamma_{n_i}\delta_{n_i}-1\text{ for }i=1,2,\\ &\gamma_{m_i}^2+\delta_{m_i}^2+y\gamma_{m_i}\delta_{m_i}-1\text{ for }i=1,2)\subset I.\end{aligned}\]
Denote by $I^{\tt e}$ the ideal generated by the image of $\phi$. Observe that 
\[\begin{aligned}I^{\tt e} = (&zc_{n_1}(x)c_{m_1}(y)+xc_{n_1}(x)d_{m_1}(y)+yd_{n_1}(x)c_{m_1}(y)\\ &\quad+d_{n_1}(x)d_{m_1}(y)-d_{n_2}(x)d_{m_2}(y),\\ &c_{n_1}(x)d_{m_1}(y)+c_{n_2}(x)d_{m_2}(y),\\ &d_{n_1}(x)c_{m_1}(y)+d_{n_2}(x)c_{m_2}(y),\\ &c_{n_1}(x)c_{m_1}(y)+c_{n_2}(x)c_{m_2}(y))
\end{aligned}\]
equals the first ideal of Proposition~\ref{prop:explicit_equations}.
Denote by $f$ the canonical $\Z$-algebra homomorphism  
\begin{align*}
R/I \to & \Z[x,y,z]/I^{\tt e}
\end{align*} which is surjective since $R\to \Z[x,y,z]$ is. Let 
\[ {}^{\tt a}f : M := \Spec{\Z[x,y,z]/I^{\tt e}} \embed \Spec{R/I} \]
be the associated closed embedding of schemes. This fits into a diagram
\begin{center}
    \begin{tikzcd}
        M \arrow[r, hook] & \Spec{R/I} \arrow[r] & \Spec{R} \arrow[d]
        \\
        M^{\genirr}(\pi, \SL_2) \arrow[u, hook] \arrow[r] & M(\pi, \SL_2) \arrow[r] \arrow[ru, hook] \arrow[u, dashed] & M(F_2, \SL_2)
    \end{tikzcd}
\end{center}
Where $M^{\genirr}(\pi, \SL_2)$ lifts to $\Spec{R/I}$ by Proposition~\ref{prop:explicit_equations}. However, its image may be strictly smaller than $M$ if the elimination ideal
\[ (I^{\tt e} + (w \cdot \red - 1)) \cap \Z[x,y,z] \]
is larger than $I^{\tt e}$. Notice, as we remarked above there may not be a dashed map if some reducible representations do not satisfy the universal relations.
\par 
We found earlier an irreducible decomposition
\[ \Spec{R/I} = V(\wp_1) \cup \cdots \cup V(\wp_r) \]
defining a universal decomposition of $M$ into closed subschemes: 
\[ M_i := {}^{\tt a}f^{-1}(V(\wp_i)) = M \cap V(\wp_i) = V(\wp_i^{\tt e}). \]
Likewise, intersecting with the subscheme $M^{\genirr}(\pi, \SL_2) \subset M_i$ gives a decomposition into closed subschemes
\[ M_i^\circ \coloneq M_i \cap M^{\genirr}(\pi, \SL_2) \]
Let $\rho : \pi \to \mathrm{SL}_2(K)$ be any representation. Write $A:=\rho(a), B:=\rho(b)$ and recall that 
\[ c_{-r}(t)=-c_{r}(t) \quad \text{and} \quad d_{-r}(t)=c_{r+1}(t)=tc_{r}(t)+d_{r}(t). \] 

\begin{rmk}
    It is important to disambiguate between two very similar conditions: 
    \begin{enumerate}
        \item $A^n = s I$ where $s \in \{ \pm 1 \}$
        \item $c_n(\tr{A}) = 0$ and $d_n(\tr{A}) = s$
    \end{enumerate}
    First note that the determinantal condition ensures that $c_n(\tr{A}) = 0$ forces $d_n(\tr{A})^2 = 1$. Hence if $c_n(\tr{A}) = 0$ then $A^n = d_n(\tr{A}) I$ and $d_n(\tr{A}) = \pm 1$. However, the opposite implication does not quite hold. Indeed, if $A^n = s I$ then $c_n(\tr{A}) = 0$ if and only if either $\tr{A} \neq \pm 2$ or $n = 0$. Of course, if $c_n(\tr{A}) = 0$ then $d_n(\tr{A}) = s$. However, if $\tr{A} = \pm 2$ then $c_n(\tr{A}) = (\pm 1)^{n-1} n$ and $d_n(\tr{A}) = - (\pm 1)^n (n-1)$.
    \par 
    To summarize: (1) and (2) are equivalent if we modify (1) to include $A \neq \pm I$ unless $n = 0$.
\end{rmk}

Given this discussion, we introduce a new piece of notation. Write $A^n =_{nt} s I$ where $n$ is an integer and $s \in \{ \pm 1 \}$ to mean that $A^n = s I$ is a \textit{nontrivial solution} menaing that, if $n \neq 0$, then $A \neq \pm I$. When $n = 0$ this condition is vacuous. The previous discussion shows that $A^n =_{nt} s I$ is equivalent to $c_n(\tr{A}) = 0$ and $d_n(\tr{A}) = s$.

It follows that the pair of matrices $A,B$ has to satisfy at least one of the following set of conditions:

For $\rho \in M_1:$ 
\[A^{n_1}=_{nt}-I,\ A^{n_2}=_{nt}-I,\ B^{m_1+m_2}=_{nt} I \]

For $\rho \in M_2:$
\[A^{n_1}=_{nt}-I,\ A^{n_2}=_{nt}I,\ B^{m_1+m_2}=_{nt}-I.\]

For $\rho \in M_3:$
\[A^{n_1}=_{nt}I,\ A^{n_2}=_{nt}I,\ B^{m_1+m_2}=_{nt}I.\]

For $\rho \in M_4:$
\[A^{n_1}=_{nt}I,\ A^{n_2}=_{nt}-I,\ B^{m_1+m_2}=_{nt}-I.\]

For $\rho \in M_5:$
\[A^{n_1+n_2}=_{nt}I,\ B^{m_1}=_{nt}-I,\ B^{m_2}=_{nt}-I.\]

For $\rho \in M_6:$
\[A^{n_1+n_2}=_{nt}-I,\ B^{m_1}=_{nt}-I,\ B^{m_2}=_{nt}I.\]

For $\rho \in M_7:$
\[A^{n_1+n_2}=_{nt}I,\ B^{m_1}=_{nt}I,\ B^{m_2}=_{nt}I.\]

For $\rho \in M_8:$
\[A^{n_1+n_2}=_{nt}-I,\ B^{m_1}=_{nt}I,\ B^{m_2}=_{nt}-I.\]

For $\rho \in M_9:$
\[A^{n_1-n_2}=_{nt} -I,\ B^{m_1 - m_2}=_{nt} I,\ \mathrm{tr}(A^{n_1}B^{m_1}) = 0.\]

For $\rho \in M_{10}:$
\[A^{n_1-n_2}=_{nt} I,\ B^{m_1-m_2}=_{nt} -I,\ \mathrm{tr}(A^{n_1}B^{m_1}) = 0.\]

For $M_1$ we illustrate why these conditions are equivalent. The first four generators involve only $x$ only are already in the desired form: $A^{n_i} =_{nt} - I$. To show that $c_{m_1 + m_2}(y) \in \wp_1^{\tt e}$ we expand
\begin{align*}
    c_{m_1 + m_2}(y) &= y c_{m_1}(y) c_{m_2}(y) + c_{m_1}(y) d_{m_2}(y) + c_{m_2}(y) d_{m_1}(y)
    \\
    & = c_{m_2}(y) (y c_{m_1}(y) + d_{m_1}(y) - d_{m_2}(y)) 
    \\ & \quad \quad  + (c_{m_1}(y) + c_{m_2}(y)) d_{m_2}(y) \in \wp_1^{\tt e}
\end{align*}
To show that $d_{m_1 + m_2}(y) - 1 \in \wp_1^{\tt e}$ we expand
\begin{align*}
    d_{m_1 + m_2}(y) - 1 
    & = d_{m_1}(y) (d_{m_2}(y) - y c_{m_1}(y) - d_{m_1}(y))  
    \\
    & \quad+ (c_{m_1}(y)^2 + y c_{m_1}(y) d_{m_1}(y) + d_{m_1}(y)^2 - 1) 
    \\
    &\quad  - (c_{m_1}(y) + c_{m_2}(y)) c_{m_2}(y) \in \wp_1^{\tt e}
\end{align*}
Furthermore it is clear that $B^{m_1 + m_2} = 1$ implies the vanishing of the generators of $\wp_1^{\tt e}$. 
The others for $i = 1, \dots, 8$ are similar. For $M_9$ note that 
\begin{align*}
    & \tr{ A^{n_1} B^{m_1}}  = \tr{[(c_{n_1}(x) A + d_{n_1}(x) I)(c_{m_1}(y) B + d_{m_1}(y) I)]}
    \\
    & = z c_{n_1}(x) c_{m_1}(y) + x c_{n_1}(x) d_{m_1}(y) + y d_{n_1}(x) c_{m_1}(y) + 2 d_{n_1}(x) d_{m_1}(y)
\end{align*}
which is the additional generator appearing in $\wp^{\tt e}$.
The discussion for $M_{10}$ is similar. 

\subsection{Dimensions of components} \label{section:dimensions}

\begin{defn}
Let $n_1,n_2$ be nonzero integers and $s_1,s_2\in \{\pm1\}$ signs. Write $g:=\mathrm{gcd}(n_1,n_2)$. We denote by $C(n_1,n_2,s_1,s_2)$ the condition 
\[\begin{aligned}g>1\text{ and }s_i=&(-1)^{n_i/g}\text{ for }i=1,2\\ &\text{or}\\ g>2\text{ and }s_i=&1\text{ for }i=1,2.\end{aligned}\]
\end{defn}

\begin{rmk}
Let $\nu_2$ be the $2$-adic valuation of $\Q$. Condition $C(n_1,n_2,s_1,s_2)$ can be restated for any particular choice of signs $s_i\in\{\pm1\}$ as follows:
\begin{enumerate}
\item $C(n_1,n_2,1,1)$ is exactly $g>2$,
\item $C(n_1,n_2,-1,1)$ is the condition $g>1$, $-1=(-1)^{n_1/g}$ and $1=(-1)^{n_2/g}$. The sign conditions are equivalent to $\nu_2(n_1/g)=0$ and $\nu_2(n_2/g)>0$. Moreover, these last two inequalities are equivalent to the inequality $\nu_2(n_1)<\nu_2(n_2)$,
\item $C(n_1,n_2,1,-1)$ is equivalent to $g>1$ and $\nu_2(n_1)>\nu_2(n_2)$,
\item $C(n_1,n_2,-1,-1)$ means, by definition, $g>1$ and $-1=(-1)^{n_i/g}$ for $i=1,2$. The conditions on signs are equivalent to the equality $\nu_2(n_1)=\nu_2(n_2)$. So $C(n_1,n_2,-1,-1)$ can be restated as $g>1$ and $\nu_2(n_1)=\nu_2(n_2)$.
\end{enumerate}
\end{rmk}

The importance of this condition is the following lemma

\begin{lemma} 
\label{lemma:system_coprime}
Let $k$ be an algebraically closed field of characteristic zero. 
Let $n_1, n_2$ be nonzero integers and $s_1,s_2\in\{\pm1\}$ signs. The system \[\left\{\begin{matrix}c_{n_1}(t) =0, & d_{n_1}(t)=s_1,
\\ c_{n_2}(t)=0, & d_{n_2}(t) =s_2;
\end{matrix}\right.\] 
has a solution in $k$ if and only if $C(n_1,n_2,s_1,s_2)$ holds.
\end{lemma}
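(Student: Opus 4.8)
The plan is to translate the polynomial system into an elementary statement about roots of unity and then verify that statement against $C(n_1,n_2,s_1,s_2)$ by a case analysis on the signs.

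First I would record the reduction. Every root $t_0$ of $c_n$ equals $2\cos(k\pi/n)$ for some $1\le k\le n-1$, so in particular $t_0\neq\pm2$; thus if $t_0$ is a common root of $c_{n_1}$ and $c_{n_2}$ and $\mu,\mu^{-1}$ denote the two (distinct) roots of $\lambda^2-t_0\lambda+1$, then $\mu\neq\pm1$. The eigenvalue identities $\mu^{n}=\mu\,c_n(t_0)+d_n(t_0)$ and $c_n(t_0)=(\mu^n-\mu^{-n})/(\mu-\mu^{-1})$ then show that the pair of conditions $c_{n_i}(t_0)=0$ and $d_{n_i}(t_0)=s_i$ is equivalent to $\mu^{n_i}=s_i$. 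Conversely, given any root of unity $\mu\neq\pm1$ with $\mu^{n_i}=s_i$, setting $t_0=\mu+\mu^{-1}$ (so $t_0\neq\pm2$) and using $\mu^{-n_i}=s_i^{-1}=s_i$ gives $c_{n_i}(t_0)=0$ and $d_{n_i}(t_0)=s_i$. Hence the system is solvable in $k$ if and only if there exists a root of unity $\mu\in k$ with $\mu\neq\pm1$, $\mu^{n_1}=s_1$, and $\mu^{n_2}=s_2$.

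Next I would set up the group theory. Writing $d$ for the order of $\mu$, the relations $\mu^{2n_i}=1$ force $d\mid\gcd(2n_1,2n_2)=2g$ with $g=\gcd(n_1,n_2)$, while $\mu\neq\pm1$ is exactly $d>2$. For $\mu$ of order $d$ one has $\mu^{n}=1\iff d\mid n$ and $\mu^{n}=-1\iff d\mid 2n,\ d\nmid n$ (the latter being independent of the chosen primitive $d$-th root, since $\mu^n$ then has order $2$). So the problem becomes: find $d>2$ with $d\mid 2g$ realizing the prescribed sign at each $n_i$. I would then dispatch the four sign patterns, matching the $2$-adic reformulations of $C$ in the Remark. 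For $(s_1,s_2)=(1,1)$ this asks for a divisor $d\mid g$ with $d>2$, possible iff $g>2$. For $(-1,-1)$, the constraints $d\mid 2n_i,\ d\nmid n_i$ force $\nu_2(d)=\nu_2(n_i)+1$ for both $i$ (hence $\nu_2(n_1)=\nu_2(n_2)$) and the odd part of $d$ to divide $g$; conversely such a $d>2$ can be produced exactly when $g>1$. The mixed cases $(-1,1)$ and $(1,-1)$ similarly force $\nu_2(n_1)<\nu_2(n_2)$ (resp.\ $>$) together with $g>1$.

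The main obstacle is purely the bookkeeping in this last step: I must choose the order $d$ so that it simultaneously realizes the correct sign pattern and satisfies $d>2$. This is exactly where the two clauses of $C$ (the threshold $g>2$ versus $g>1$) and the $2$-adic valuations enter, and where care is needed—taking the odd part of $d$ equal to $1$ suffices once $\nu_2$ of the relevant $n_i$ is positive, but when that $n_i$ is odd one genuinely needs an odd prime factor of $g$, i.e.\ $g>1$, in order to push $d$ above $2$. Once these small verifications are assembled, the existence of a valid $d>2$ coincides with $C(n_1,n_2,s_1,s_2)$ in every case, completing the proof.
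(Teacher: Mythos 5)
Your proof is correct, and it takes a genuinely different route from the paper's. Both arguments share the same dictionary: a solution $t_0$ of the system corresponds to a diagonalizable $M \in \SL_2$ with $M^{n_i} = s_i I$ and $M \neq \pm I$, i.e.\ to a root of unity $\mu \neq \pm 1$ with $\mu^{n_i} = s_i$. But from there the paper argues via the polynomial gcd lemma (Lemma~\ref{lemma:gcd}, giving $c_g = \gcd(c_{n_1}, c_{n_2})$ and hence $g > 1$) together with a Bezout identity $g = an_1 + bn_2$, which yields $M^g = s_1^a s_2^b I$ and then, splitting on whether $s_1^a s_2^b = \pm 1$, produces the condition $C$ directly in its original clause form $s_i = (-1)^{n_i/g}$ (with the special analysis at $g = 2$ using $\tr{M} = \pm 2$); the converse is a single uniform witness $M_g = \mathrm{diag}(\zeta_{2g}, \zeta_{2g}^{-1})$, or $M_g^2$ in the $(1,1)$ case. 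You instead work with the order $d$ of $\mu$, reduce to ``there exists $d \mid 2g$ with $d > 2$ realizing the prescribed signs,'' and resolve the four sign patterns by $2$-adic valuations ($\mu^n = -1 \iff \nu_2(d) = \nu_2(n)+1$ and the odd part of $d$ divides $n$). This buys you something and costs you something: you avoid Lemma~\ref{lemma:gcd} entirely (the divisibility $d \mid 2g$ replaces the gcd of Chebyshev-type polynomials), and your per-case choice of $d$ makes completely transparent why the thresholds $g > 1$ versus $g > 2$ occur — the odd part of $d$ can be taken to be $1$ when the relevant $\nu_2$ is positive, but an odd $n_i$ forces an odd factor of $g$, exactly as you flag. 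The cost is the four-way bookkeeping and that you verify $C$ in the $2$-adic form of the paper's Remark rather than in its defining clause form, so your argument implicitly leans on that (elementary, unproven in the paper) equivalence; the paper's Bezout computation lands on the defining form in two lines and with one witness. Do make sure, when writing this up, to include the small converse verifications in the mixed cases, which you currently compress into ``similarly'': they are exactly parallel to your $(-1,-1)$ case but need the extra check $\nu_2(d) \le \nu_2(n_2)$ when imposing $d \mid n_2$.
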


By applying Lemma~\ref{lemma:system_coprime} we obtain the following computation of the dimensions of the ``middle layers'' of the intersection diagram. To shorten the notation, we will use the somewhat nonstandard practice of writing
\[ f(x) = 
\begin{cases}
a & P
\\
b & \text{else}
\\
c & Q
\end{cases} \]
to mean: if $P$ is true the $f(x) = a$, otherwise if $Q$ is true then $f(x) = c$, and otherwise if both $P$ and $Q$ are false then $f(x) = b$.

\begin{prop} \label{prop:mid_component_dimension}
\begin{align*}
\dim V(\wp_1^{\tt e})_{\Q} &= \begin{cases} -\infty & \neg C(n_1, n_2,-1,-1) \text{ or } m_1 + m_2 = \pm 1, \pm 2 \\ 1 & \text{else} \\ 2 & C(n_1, n_2,-1,-1) \text{ and } m_1 + m_2 = 0 
\end{cases}
\\
\dim V(\wp_2^{\tt e})_{\Q} &= \begin{cases} -\infty & \neg C(n_1, n_2, -1, 1) \text{ or } m_1 + m_2 = 0, \pm 1 \\ 1 & C(n_1, n_2, -1, 1) \text{ and } m_1 + m_2 \neq 0, \pm 1
\end{cases}
\\
\dim V(\wp_3^{\tt e})_{\Q} &= \begin{cases} -\infty & \neg C(n_1, n_2, 1, 1) \text{ or } m_1 + m_2 = \pm 1, \pm 2 \\ 1 & \text{else} \\ 2 & C(n_1, n_2, 1, 1) \text{ and } m_1 + m_2 = 0
\end{cases}
\\
\dim V(\wp_4^{\tt e})_{\Q} &= \begin{cases} -\infty & \neg C(n_1, n_2, 1, -1) \text{ or } m_1 + m_2 = 0, \pm 1 \\ 1 & C(n_1, n_2, 1, -1) \text{ and } m_1 + m_2 \neq 0, \pm 1
\end{cases}
\\
\dim V(\wp_5^{\tt e})_{\Q} &= \begin{cases} -\infty &  \neg C(m_1, m_2, -1, -1) \text{ or } n_1 + n_2 = \pm 1, \pm 2 \\ 1 & \text{else} \\ 2 & C(m_1, m_2,-1,-1) \text{ and } n_1 + n_2 = 0
\end{cases}
\\
\dim V(\wp_6^{\tt e})_{\Q} &= \begin{cases} -\infty & \neg C(m_1, m_2, -1, 1) \text{ or } n_1 + n_2 = 0, \pm 1 \\ 1 & C(m_1, m_2, -1, 1) \text{ and } n_1 + n_2 \neq 0, \pm 1
\end{cases}
\\
\dim V(\wp_7^{\tt e})_{\Q} &= \begin{cases} -\infty & \neg C(m_1, m_2, 1, 1) \text{ or } n_1 + n_2 = \pm 1, \pm 2 \\ 1 & \text{else} \\ 2 & C(m_1, m_2, 1, 1) \text{ and } n_1 + n_2 = 0
\end{cases}
\\
\dim V(\wp_8^{\tt e})_{\Q} &= \begin{cases} -\infty & \neg C(m_1, m_2, 1, -1) \text{ or } n_1 + n_2 = 0, \pm 1 \\ 1 & C(m_1, m_2, 1, -1) \text{ and } n_1 + n_2 \neq 0, \pm 1
\end{cases}
\end{align*}
\end{prop}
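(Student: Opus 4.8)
The plan is to reduce the computation for each $\wp_i^{\tt e}$ with $i = 1, \dots, 8$ to two independent one-variable problems, exploiting a separation of variables. First I would record, following Remark~\ref{rmk:constraining_z}, that no generator of $\wp_i^{\tt e}$ involves $z$; moreover, inspecting the images under $\phi$ of the tabulated primes $\wp_1, \dots, \wp_8$, every remaining generator lies either in $\Z[x]$ or in $\Z[y]$ (the lone quadratic generator $\gamma_\bullet^2 + \cdots - 1$ maps to $0$ by the determinantal identity~(\ref{eq:determinant_condition}) and is redundant). Hence $\wp_i^{\tt e} = I_x + I_y$ with $I_x \subset \Z[x]$ and $I_y \subset \Z[y]$, so that
\[ V(\wp_i^{\tt e})_{\Q} = V(I_x)_{\Q} \times_{\Q} V(I_y)_{\Q} \times \A^1_z \]
and therefore $\dim V(\wp_i^{\tt e})_{\Q} = \dim V(I_x)_{\Q} + \dim V(I_y)_{\Q} + 1$, with the convention that an empty factor forces $-\infty$. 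Since dimension is geometric, I may compute each factor after base change to $\ol{\Q}$, where Lemma~\ref{lemma:system_coprime} applies.

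Using the matrix descriptions of $M_i = V(\wp_i^{\tt e})$ established above, exactly one of the two factors is a \emph{system} condition (two simultaneous powers of one matrix) and the other a \emph{single-power} condition. For the system factor---say $c_{n_1}(x) = 0,\ d_{n_1}(x) = s_1,\ c_{n_2}(x) = 0,\ d_{n_2}(x) = s_2$ with signs dictated by the component (e.g. $(s_1, s_2) = (-1,-1)$ for $\wp_1$, and read off each listed matrix condition for the rest)---Lemma~\ref{lemma:system_coprime} says there is a solution, necessarily a finite set of trace values, precisely when $C(n_1, n_2, s_1, s_2)$ holds; otherwise this factor, and hence $V(\wp_i^{\tt e})_{\Q}$, is empty. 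This produces the $\neg C(\cdots)$ clauses in every case and contributes dimension $0$ whenever nonempty.

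For the single-power factor I would analyze the locus $\{ t : c_N(t) = 0,\ d_N(t) = s \}$, where $N$ is the signed sum ($N = m_1 + m_2$ for $\wp_1, \dots, \wp_4$ and $N = n_1 + n_2$ for $\wp_5, \dots, \wp_8$) and $s = \pm 1$ is the relevant sign. The equivalence of this clean condition with the actual generators (e.g. $c_{m_1}(y) + c_{m_2}(y)$ and $y c_{m_1}(y) + d_{m_1}(y) - d_{m_2}(y)$) is exactly what was checked in the discussion of $M_1$. When $N = 0$ we have $c_0 = 0$ identically and $d_0 = 1$, so the locus is all of $\A^1$ (dimension $1$) if $s = 1$ and empty if $s = -1$; this is the sole source of the dimension-$2$ components. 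When $N \neq 0$ the factorization $c_N(t) = \prod_{k=1}^{|N|-1}\!\big(t - 2\cos(k\pi/|N|)\big)$ shows the locus is finite, and a matrix of trace $2\cos(k\pi/|N|)$ has eigenvalues $e^{\pm i k\pi/|N|}$, giving $d_N = (-1)^k$ at that root; hence the locus is nonempty iff some $k \in \{1, \dots, |N|-1\}$ satisfies $(-1)^k = s$. For $s = 1$ this needs an even $k$ in range, i.e. $|N| \ge 3$, so the factor is empty exactly when $N = \pm 1, \pm 2$; for $s = -1$ it needs an odd $k$, i.e. $|N| \ge 2$, so the factor is empty exactly when $N = \pm 1$. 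Adding the three contributions then yields each displayed case.

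I expect the principal obstacle to be bookkeeping rather than conceptual: for each of the eight components one must correctly match the sign data $(s_1, s_2)$ of the system factor and the sign $s$ of the single-power factor against the listed matrix conditions, and then verify that the ``empty'' and ``dimension-$2$'' regimes line up with the three-way case split of the statement (and with its nonstandard \texttt{case} convention). The one genuinely substantive input beyond routine matching is the sign computation $d_N = (-1)^k$ at the roots of $c_N$, which is the eigenvalue analysis of \S\ref{section:polynomials}; the degeneration $N = 0$, where $y$ (resp.\ $x$) becomes a free coordinate, must be carried as a separate case throughout, as it is precisely what distinguishes the components that jump to dimension $2$.
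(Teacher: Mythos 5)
Your proposal is correct and follows essentially the same route as the paper's proof: separate the variables (the system condition on one trace handled by Lemma~\ref{lemma:system_coprime}, the single-power condition on the other, and $z$ always free contributing one dimension), with the degenerate exponent-sum $N = 0$ carried as the sole source of dimension-$2$ components. The only difference is presentational: where the paper tersely cites the facts that $c_{\pm 1} = 0$ has no roots and $c_{\pm 2} = 0,\ d_{\pm 2} = 1$ is unsatisfiable, you rederive the single-power emptiness criteria uniformly from the eigenvalue computation $d_N = (-1)^k$ at the roots $2\cos(k\pi/|N|)$ of $c_N$, which is the same \S\ref{section:polynomials} machinery made explicit.
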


\begin{proof}
Recall that $c_{\pm 1}(x) = 0$ has no solutions and $c_{\pm 2}(x) = 0$ and $d_{\pm 2}(x) = 1$ has no simultaneous solutions. These, along with the lemma give the cases where $V(\wp_i^{\tt e})_{\Q}$ is empty. 
\par 
When $n = 0$ the system $A^n =_{nt} I$ (i.e.\ $c_n(x) = 0$ and $d_n(x) = 1$) imposes no condition and $A^n =_{nt} -I$ is unsatisfiable giving an additional possibility for when the component is empty. 
\par 
Since there is no condition fixing $z$, whenever the component is nonempty it has dimension at least $1$. Without loss of generality, we focus on $M_1$ and $M_2$. The conditions on $A$ are always nontrivial (since we assume $n_1, n_2 \neq 0$) and fix $\tr{A}$ onto some zero-dimensional subscheme. Therefore the dimension is determined by whether or not the condition on $B$, namely $B^{m_1 + m_2} =_{nt} \pm 1$ imposes a condition on $\tr{B}$. It does if and only if $m_1 + m_2 \neq 0$.
\end{proof}

\begin{proof}[Proof of Lemma~\ref{lemma:system_coprime}]
See \S \ref{section:polynomials} for properties of the polynomials $c_n, d_n$. If the system has a solution, then $c_{n_1}, c_{n_2}$ have a common root so, setting $g = (n_1, n_2)$, $c_{g}$ must have a root. Indeed, by Lemma~\ref{lemma:gcd} we know $c_g(t)=\mathrm{gcd}(c_{n_1}(t),c_{n_2}(t))$. Hence $g > 1$. Let $M$ be a matrix whose trace is a solution. Then $M^{n_i} = s_i I$ so $M^g = M^{a n_1 + b n_2} = s_1^a s_2^b I$. But then $M^{n_i} = (M^g)^{n_i/g} = (s_1^a s_2^b)^{n_i / g} I$ so $s_i = (s_1^a s_2^b)^{n_i / g}$. If $s_1^a s_2^b = -1$ then we recover the condition $g > 1$ and $s_i = (-1)^{n_i/g}$. If $s_1^a s_2^b = 1$ then we have $M^g = I$ and moreover $c_g(\tr{M}) = 0$ which implies $\tr{M} \neq \pm 2$ but if $g = 2$ then $M^2 = I$ implies $\tr{M} = \pm 2$. Hence $g > 2$ and we reach the second condition: $g > 2$ and $s_i = 1$. 
\par 
Conversely, consider the matrix 
\[ M_g = \begin{pmatrix}
\zeta_{2g} & 0
\\
0 & \zeta_{2g}^{-1}
\end{pmatrix} \]
which satisfies $M_g^{g} = -I$ so $M_g^{n_i} = (M_g^g)^{n_i/g} = (-1)^{n_i/g}I = s_i I$. This satisfies the property in the first case. For $g \neq 2$ and $s_1 = s_2 = 1$ we can set $M = M_g^2$ then $M^g = I$ so $M^{n_i} = 2$ and $M \neq \pm I$ so $\tr{M}$ satisfies the system.
\end{proof}

\begin{lemma} \label{prop:mid_components_integrality}
For $i = 1, \dots, 8$, we have $M_i \subset M^{\genirr}(\pi, \SL_2)$ and as long as $M_i \neq \emptyset$ then $M_i(\Z[\mu_\infty]) \neq \emptyset$.
\end{lemma}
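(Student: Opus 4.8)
The assertion comprises two claims: the inclusion $M_i \subseteq M^{\genirr}(\pi, \SL_2)$ for $i = 1, \dots, 8$, and the existence of a $\Z[\mu_\infty]$-point whenever $M_i$ is nonempty. The organizing observation, recorded in Remark~\ref{rmk:constraining_z}, is that for these indices no generator of $\wp_i^{\tt e}$ involves $z$, so that $M_i = C_i \times \A^1_z$ with $C_i \subseteq \A^2_{x,y}$ cut out by the images of the $\wp_i$-generators, each monic in $x$ or in $y$. In particular $\mathcal{O}(M_i) = \mathcal{O}(C_i)[z]$ and $\mathcal{O}(C_i)$ is a free $\Z$-module.

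For the inclusion I would show that the irreducible locus is schematically dense in $M_i$. Restricted to $M_i$ the function $\red = z^2 - xyz + (x^2 + y^2 - 4)$ is monic of degree two in the free variable $z$, hence a nonzerodivisor on $\mathcal{O}(C_i)[z]$; therefore $\mathcal{O}(M_i) \to \mathcal{O}(M_i)[\red^{-1}]$ is injective, and composing with $\mathcal{O}(M_i)[\red^{-1}] \to \mathcal{O}(M_i)[\red^{-1}] \otimes_\Z \Q$, injective because the localization is $\Z$-torsion-free, embeds $\mathcal{O}(M_i)$ into $\mathcal{O}\big((D(\red) \cap M_i)_\Q\big)$. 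Since $D(\red) \cap M_i \subseteq D(\red) \cap M = M^{\irr}(\pi, \SL_2)$ by Propositions~\ref{prop:red_irreducible} and~\ref{prop:explicit_equations}, the structure map $\Z[x,y,z] \to \mathcal{O}\big((D(\red)\cap M_i)_\Q\big)$ factors through $\mathcal{O}(M^{\irr}(\pi, \SL_2)_\Q)$. By the injectivity just noted, the kernel of $\Z[x,y,z]\to \mathcal{O}(M_i)$ therefore contains the kernel of $\Z[x,y,z]\to \mathcal{O}(M^{\irr}(\pi,\SL_2)_\Q)$, which is the ideal of $M^{\genirr}(\pi,\SL_2)$ by its definition as the scheme-theoretic image. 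Hence $M_i \subseteq M^{\genirr}(\pi, \SL_2)$, and in particular $M_i = M_i^\circ$.

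For the point, assume $M_i \neq \emptyset$, which via Lemma~\ref{lemma:system_coprime} and the emptiness criteria of Proposition~\ref{prop:mid_component_dimension} means the relevant condition $C(n_1, n_2, s_1, s_2)$ together with the constraint on $m_1 + m_2$ holds. The diagonal matrices produced in the proof of Lemma~\ref{lemma:system_coprime}, whose entries are roots of unity, realize the conditions $A^{n_1} =_{nt} s_1 I$ and $A^{n_2} =_{nt} s_2 I$; their trace $x_0 = \zeta + \zeta^{-1}$ lies in $\Z[\mu_\infty]$. The same recipe gives a diagonal root-of-unity matrix $B$ with $B^{m_1 + m_2} =_{nt} s I$, so $y_0 = \tr{B} \in \Z[\mu_\infty]$ (and when $m_1 + m_2 = 0$ there is no constraint, so one takes $B = I$, $y_0 = 2$). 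Taking $z_0 = 0$, permitted since $z$ is unconstrained, the triple $(x_0, y_0, z_0)$ satisfies the matrix conditions defining $M_i$; these imply the vanishing of every generator of $\wp_i^{\tt e}$, exactly the computation carried out for $M_1$ in the text, so $(x_0, y_0, z_0) \in M_i(\Z[\mu_\infty])$.

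The principal difficulty is the first part: the inclusion must be proved integrally rather than only on the generic fiber, so one cannot merely invoke density over $\Q$ but must verify that $\red$ is a genuine nonzerodivisor on $\mathcal{O}(M_i)$ and that the localization stays $\Z$-torsion-free, both of which rely on $z$ being a free coordinate and on the monicity in $x, y$ of the equations cutting out $C_i$. A secondary point worth flagging is that \emph{nonempty} must be read in characteristic zero, so that Lemma~\ref{lemma:system_coprime} supplies the root-of-unity solutions; components supported purely in positive characteristic, which could occur when the relevant Chebyshev-type polynomials are coprime over $\Q$ but share a factor modulo some prime, carry no $\Z[\mu_\infty]$-point and are excluded by this reading.
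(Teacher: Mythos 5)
Your overall strategy matches the paper's: both proofs rest on the product structure $M_i \cong C_i \times \A^1_z$ from Remark~\ref{rmk:constraining_z}, on $\red$ being a monic quadratic in the free variable $z$, and, for the $\Z[\mu_\infty]$-point, on the root-of-unity solutions furnished by Lemma~\ref{lemma:system_coprime} together with monicity of the equations constraining $x$ and $y$; your second part is essentially the paper's argument and is fine. The difference lies in how you execute the inclusion $M_i \subseteq M^{\genirr}(\pi,\SL_2)$: the paper argues at generic points, observing that every generic point of $M_i$ has residue field containing $k(z)$ with $z$ transcendental, so the monic quadratic $\red$ cannot vanish there; hence every component of $M_i$ meets $D(\red)$ and $M_i$ lies in the closure of $M \cap D(\red)$, which is the characterization of $M^{\genirr}$ the paper invokes. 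You instead localize at $\red$ and then base change to $\Q$, and this forces the claim that $\mathcal{O}(C_i)$ is a free $\Z$-module.

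That claim is a genuine gap: it does not follow from monicity of the individual generators, and it can fail. For instance, for $\wp_1^{\tt e}$ with $(n_1,n_2) = (2,4)$ the $x$-part of the ideal is $(c_2(x),\, d_2(x)+1,\, c_4(x),\, d_4(x)+1) = (x,\, x^3-2x,\, 2-x^2) = (x,2)$, so the corresponding quotient is $\Z[x]/(x,2) \cong \mathbb{F}_2$, which is pure $2$-torsion; combined with a condition on $y$ solvable modulo $2$ (e.g.\ $m_1+m_2 = 3$, $y \equiv 1$), this makes $M_1$ nonempty but supported entirely over $\mathbb{F}_2$. (Consistently, $C(2,4,-1,-1)$ fails since $\nu_2(2) \neq \nu_2(4)$, so there is no characteristic-zero point.) Your own closing paragraph concedes exactly this possibility of vertical components, but you deploy it only to reinterpret the nonemptiness hypothesis in the second claim; it equally destroys the injectivity of $\mathcal{O}(M_i)[\red^{-1}] \to \mathcal{O}(M_i)[\red^{-1}] \otimes_\Z \Q$ on which your proof of the inclusion depends, so on such components your argument for $M_i \subseteq M^{\genirr}(\pi,\SL_2)$ breaks down --- and it cannot be patched by proving torsion-freeness, since the example above refutes it. The paper's generic-point formulation never tensors with $\Q$: the nonvanishing of $\red$ in a residue field containing $k(z)$ is indifferent to the characteristic of the component, which is precisely what your route through the rational fiber loses. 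To repair your write-up you would need either to restrict the inclusion claim to the horizontal part of $M_i$ (making the characteristic-zero reading explicit throughout, not only in the second claim) or to adopt the paper's pointwise argument relative to its closure characterization of $M^{\genirr}$.
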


\begin{proof}
Recall that $M^{\genirr}(\pi, \SL_2)$ is the closure of $M \cap D(\red)$ in $M$ so it suffices to show that $\red$ does not vanish at each generic point. 
Assume $M_i \neq \emptyset$. Notice that no equation in $\wp_i$ involves $z$. Hence, as long as there is a solution for some $x,y \in K$ then $z$ is completely unconstrained. As long as $K$ is infinite there will be a choice of $z$ such that $\red(x,y,z) \neq 0$ since it is a monic quadratic over $\Z[x,y]$. Hence, such a representation defines a point of $M^{\genirr}(\pi, \SL_2)$. This discussion holds whenever $\Spec{K} \to M_i$ is a generic point since $M_i \cong \Spec{\Z[x,y]/(\Z[x,y] \cap \wp^{\tt e})} \times \Spec{\Z[z]}$ so $k(z) \subset K$ and hence $\red \neq 0$ in $K$. Furthermore, the equations fixing $x,y$ are monic and moreover their roots all lie in cyclotomic fields (since they are all of the form $c_k = 0$ or $d_k = \pm 1$) and $k$ so any point with $z \in \Z[\mu_\infty]$ is a $\Z[\mu_\infty]$-point. 
\end{proof}

Similarly, we obtain a description of components $9,10$.

\begin{prop}
\begin{align*}
\dim V(\wp_9^{\tt e})_{\Q} &= 
\begin{cases}
-\infty & n_1 - n_2 = 0, \pm 1 \text{ or } m_1 - m_2 = \pm 1, \pm 2 
\\
& \text{ or } 
\\
& \Big\{ n_1 - n_2 \divides n_1 \text{ and } \neg C(m_1+m_2, m_1 - m_2, -1,1) \Big\} 
\\
& \text{ or } 
\\
& \Big\{ m_1 - m_2 \divides 2m_1 \text{ and } \neg C(n_1+n_2, n_1 - n_2, 1,-1) \Big\}
\\
0 & \text{else}
\\
1 & m_1 = m_2 \text{ or } 
\\
& \Big\{ [C(n_1, n_2, -1,1) \text{ or } C(n_1, n_2, 1,-1)] 
\\
& \quad \text{ and } C(m_1 + m_2, m_1 - m_2, -1, 1) \Big\}
\\
& \text{ or }
\\
& \Big\{ [C(m_1, m_2, 1,1) \text{ or } C(m_1, m_2, -1,-1)] 
\\
& \quad \text{ and } C(n_1 + n_2, n_1 - n_2, 1, -1) \Big\}
\end{cases}
\\
\dim V(\wp_{10}^{\tt e})_{\Q} &= 
\begin{cases}
-\infty & m_1 - m_2 = 0, \pm 1 \text{ or } n_1 - n_2 = \pm 1, \pm 2 
\\
& \text{ or } 
\\
& \Big\{ m_1 - m_2 \divides m_1 \text{ and } \neg C(n_1+n_2, n_1 - n_2, -1,1) \Big\} 
\\
& \text{ or } 
\\
& \Big\{ n_1 - n_2 \divides 2n_1 \text{ and } \neg C(m_1+m_2, m_1 - m_2, 1,-1) \Big\}
\\
0 & \text{else}
\\
1 & n_1 = n_2 \text{ or } 
\\
& \Big\{ [C(m_1, m_2, -1,1) \text{ or } C(m_1, m_2, 1,-1)] 
\\
& \quad \text{ and } C(n_1 + n_2, n_1 - n_2, -1, 1) \Big\}
\\
& \text{ or }
\\
& \Big\{ [C(n_1, n_2, 1,1) \text{ or } C(n_1, n_2, -1,-1)] 
\\
& \quad \text{ and } C(m_1 + m_2, m_1 - m_2, 1, -1) \Big\}
\end{cases}
\end{align*}
\end{prop}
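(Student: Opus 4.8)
The plan is to exploit the layered structure of the generators of $\wp_9^{\tt e}$ in the coordinates $x,y,z$. Working over an algebraically closed field of characteristic zero (the dimension over $\Q$ agrees with the geometric one, and every solution below is cyclotomic), I would first separate the five generators into three groups. The pair $c_{n_1}(x)+c_{n_2}(x)$, $d_{n_1}(x)+d_{n_2}(x)$ cuts out a subscheme $X\subset\A^1_x$; by the matrix dictionary of \S\ref{section:dimensions}, the product formulas, and the determinantal relation~(\ref{eq:determinant_condition}), $X$ is exactly the locus where $A^{n_1-n_2}=_{nt}-I$. Symmetrically $c_{m_1}(y)-c_{m_2}(y)$, $d_{m_1}(y)-d_{m_2}(y)$ cut out $Y\subset\A^1_y$, the locus $B^{m_1-m_2}=_{nt}I$. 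The last generator is linear in $z$ with coefficient $P(x,y):=c_{n_1}(x)c_{m_1}(y)$, so $V(\wp_9^{\tt e})$ fibres over $X\times Y$ with fibre a single point when $P\neq0$ and a whole $\A^1$ precisely when both $P=0$ and the residual term $Q(x,y):=xc_{n_1}(x)d_{m_1}(y)+yd_{n_1}(x)c_{m_1}(y)+2d_{n_1}(x)d_{m_1}(y)$ vanish. This yields the master formula $\dim V(\wp_9^{\tt e})=\max(\dim\{P\neq0\},\ 1+\dim\{P=0=Q\})$, which drives the entire computation.

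I would next determine the sizes of $X$ and $Y$ from the $\SL_2$ eigenvalue constraint $\lambda\lambda^{-1}=1$: a nontrivial solution of $A^s=\pm I$ requires a root of unity $\lambda\neq\pm1$ with $\lambda^s=\pm1$. This makes $X$ empty exactly for $n_1-n_2=0,\pm1$, makes $Y$ empty exactly for $m_1-m_2=\pm1,\pm2$, and gives $Y=\A^1_y$ in the vacuous case $m_1=m_2$; otherwise $X$ and $Y$ are finite and nonempty. These observations already give the first two $-\infty$ clauses and the $m_1=m_2$ clause of the dimension-$1$ case, since when $m_1=m_2$ and $X\neq\emptyset$ the fibration forces dimension $1$ (either $z$ is determined over the one-dimensional $Y$, or at the finitely many $x_0\in X$ with $c_{n_1}(x_0)=0$ the residual equation cuts $Y$ to a finite set but frees $z$). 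The ordered reading of the cases handles the overlap $m_1=m_2$ with $X=\emptyset$.

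The core is the analysis of $\{P=0\}$ and of $\{P=0=Q\}$, where Lemma~\ref{lemma:system_coprime} enters. Vanishing of $c_{n_1}$ means $A^{n_1}=\epsilon I$; together with $A^{n_1-n_2}=-I$ this is the two-power system $A^{n_1}=\epsilon I,\ A^{n_2}=-\epsilon I$, so $c_{n_1}$ vanishes at some point of $X$ iff $C(n_1,n_2,-1,1)$ or $C(n_1,n_2,1,-1)$ holds, while $c_{n_1}$ vanishes on all of $X$ iff $n_1-n_2\divides n_1$ (tested on the primitive $2(n_1-n_2)$-th root of unity). The symmetric analysis of $c_{m_1}$ on $Y$ gives $C(m_1,m_2,1,1)$ or $C(m_1,m_2,-1,-1)$, and all of $Y$ iff $m_1-m_2\divides 2m_1$, the factor $2$ reflecting that $Y$ is governed by $B^{m_1-m_2}=+I$. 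Finally, at a point with $c_{n_1}(x_0)=0$ the residual equation $Q(x_0,y)=0$ collapses to $\tr(B^{m_1})=yc_{m_1}(y)+2d_{m_1}(y)=0$, i.e.\ $B^{2m_1}=-I$, which combined with $B^{m_1-m_2}=I$ is equivalent by Lemma~\ref{lemma:system_coprime} to $C(m_1+m_2,m_1-m_2,-1,1)$; dually, at $c_{m_1}(y_0)=0$ the residual becomes $\tr(A^{n_1})=0$, equivalent to $C(n_1+n_2,n_1-n_2,1,-1)$.

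Assembling these, the $z$-free locus $\{P=0=Q\}$ is nonempty exactly in the three listed dimension-$1$ situations; the two remaining $-\infty$ clauses are precisely where $P$ vanishes identically ($n_1-n_2\divides n_1$, resp.\ $m_1-m_2\divides 2m_1$) yet the matching residual locus is empty, so that neither a $z$-determined nor a $z$-free point survives; and in the complementary ``else'' range $\{P\neq0\}$ is a nonempty finite set while $\{P=0=Q\}=\emptyset$, giving dimension $0$. The statement for $\wp_{10}^{\tt e}$ follows from the involution $a\leftrightarrow b$, which swaps $(n_i,x)\leftrightarrow(m_i,y)$ and interchanges the roles of $+I$ and $-I$. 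The main obstacle is the bookkeeping in this assembly: one must check that the various contributions to the dimension never silently overlap or get double-counted, that $\{P=0=Q\}$ is genuinely empty throughout the ``else'' range (so that no stray $z$-free point raises the dimension to $1$), and that the ordered interpretation of the cases correctly resolves the boundary overlaps such as $n_1-n_2\divides n_1$ occurring together with a solvable residual.
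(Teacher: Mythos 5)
Your proposal is correct and follows essentially the same route as the paper's proof: you stratify $V(\wp_9^{\tt e})$ over the loci $A^{n_1-n_2}=_{nt}-I$, $B^{m_1-m_2}=_{nt}I$ according to the vanishing of the $z$-coefficient $c_{n_1}(x)c_{m_1}(y)$, use Lemma~\ref{lemma:system_coprime} to translate solvability of the resulting two-power systems into the conditions $C(n_1,n_2,\pm1,\mp1)$, $C(m_1+m_2,m_1-m_2,-1,1)$, $C(n_1+n_2,n_1-n_2,1,-1)$, characterize forced vanishing by $n_1-n_2\divides n_1$ and $m_1-m_2\divides 2m_1$, and obtain $\wp_{10}^{\tt e}$ by symmetry, exactly as the paper does (your collapse of the residual to $\tr(B^{m_1})=0$ is the trace-condition form the paper itself notes is equivalent to the word argument). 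The explicit master formula $\dim=\max\bigl(\dim\{P\neq0\},\ 1+\dim\{P=0=Q\}\bigr)$ is just a cleaner packaging of the same case analysis.
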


\begin{proof}
In components $i = 9,10$ observe that there is only one equation involving $z$ and it appears exactly once with coefficient $c_{n_1}(x) c_{m_1}(y)$. Hence $z$ is uniquely determined by $x,y$ unless $A^{n_1} =_{nt} \pm I$ or $B^{m_1} =_{nt} \pm I$ in which case $z$ is completely free. This will be the main observation in the argument. 

Since the equations defining $M_9$ and $M_{10}$ are identical up to swapping $A,B$ and $x,y$ and $n_i, m_i$, without loss of generality, we may focus on $M_9$. Recall $M_9$ is defined by the following relations in terms of matrices
\[ A^{n_1 - n_2} =_{nt} - I \quad B^{m_1 - m_2} =_{nt} I \quad \tr{A^{n_1} B^{m_1}} = 0. \]
We first compute when $M_9$ is empty. Suppose $n_1 - n_2 = \pm 1$ then the condition $A^{n_1 - n_2} =_{nt} -I$ is contradictory so indeed $M_9 = \emptyset$. If $m_1 - m_2 = \pm 1, \pm 2$ then the condition $B^{m_1 - m_2} =_{nt} I$ is contradictory so indeed $M_9 = \emptyset$.
\par 
Suppose neither of these hold, then the first two conditions are satisfiable so we need to see when the third is as well. Notice the third equation is equivalent to the trace equation
\[ zc_{n_1}(x)c_{m_1}(y)+xc_{n_1}(x)d_{m_1}(y)+yc_{m_1}(y)+d_{n_1}(x)+2d_{n_1}(x)d_{m_1}(y)=0, \]
so if $c_{n_1}(x) c_{m_1}(y) \neq 0$ this is always satisfiable for some $z$. Hence we only need to consider the cases that $A^{n_1} =_{nt} \pm 1$ or $B^{m_1} =_{nt} \pm I$. 
\par 
Suppose $A^{n_1} =_{nt} s I$ then $A^{n_2} = -s I$ from the first equation. From the word we see that
\[ A^{n_1} B^{m_1} A^{n_2} B^{m_2} = -B^{m_1 + m_2} = I \]
but we also have $B^{m_1 - m_2} =_{nt} I$ and hence to have a solution $C(m_1 + m_2, m_1 - m_2, -1, 1)$ must hold. The only additional constraint on $A$ is $A^{n_1 - n_2} =_{nt} -I$ so $A^{n_1} =_{nt} s I$ is only forced if $n_1 - n_2 \divides n_1$ (equivalently if $n_1 - n_2 \divides n_2$) because otherwise $A^{n_1 - n_2} =_{nt} - I$ admits solutions that do not satisfy $A^{n_1} = \pm I$. Similarly, if $B^{m_1} = \pm I$ the same argument shows that $C(n_1 + n_2, n_1 - n_2, 1, -1)$ must be satisfied. But $B^{m_1 - m_2} =_{nt} I$ forces $B^{m_1} = \pm I$ exactly when $m_1 - m_2 \divides 2 m_1$. Hence if the conditions listed under $-\infty$ in the theorem statement hold, then the component must be empty. 
\par 
Otherwise, there exists a solution but notice that $A^{n_1 - n_2} =_{nt} - I$ always is either nontrivial (restricting $x = \tr{A}$ to a zero-dimensional subscheme) or unsatisfiable. The same is true of $B^{m_1 - m_2} =_{nt} I$ except when $m_1 = m_2$ then this imposes no condition.  Hence if $m_1 \neq m_2$ then the condition on $B$ restricts $\tr{B}$ to a zero-dimensional subscheme. 
\\
Suppose $m_1 = m_2$, then the component will be at least $1$-dimensional. Indeed we can rewrite the last generator of the ideal as giving the equation
\[ z c_{n_1}(x) c_{m_1}(y) + c_{n_1+1}(x) d_{m_1}(y) + c_{m_1 + 1}(y) d_{n_1}(x) = 0\]
since $c_{n_1}(x)$ and $c_{n_1+1}(x)$ cannot vanish simultaneously we see that this equation is always nontrivial in $y,z$ meaning the dimension is at most $1$. Hence we conclude.
\\
From now on, suppose $m_1 \neq m_2$ hence the condition $B^{m_1 - m_2} =_{nt} I$ constrains $\tr{B}$ to a zero-dimensional subscheme and $B \neq \pm I$. As long as $A^{n_1} =_{nt} s_n I$ and $B^{m_1} =_{nt} s_m I$ are not satisfied for any choice of signs $s_n, s_m$ then $z$ is uniquely determined by the remaining equation so the component will be zero dimensional. Hence, to understand the $1$-dimensional structure, we want to know when $A^{n_1} =_{nt} s_1 I$ or $B^{m_1} =_{nt} s_2 I$ is \textit{consistent} with the defining equation (when we analyzed emptiness we needed to see when this condition was forced). The system $A^{n_1 -  n_2} =_{nt} -I$ and $A^{n_1} =_{nt} \pm I$ has a solution if and only if $C(n_1, n_2, \pm 1, \mp 1)$. When this is satisfied, in order to satisfy the word (or equivalently the trace condition $\tr{A^{n_1} B^{m_1}} = 0$) we must have
\[ B^{m_1 + m_2} =_{nt} - I \]
but also $B^{m_1 - m_2} =_{nt} I$ so $C(m_1 + m_2, m_1 - m_2, -1, 1)$ must be satisfied. Similarly, the system $B^{m_1 - m_2} =_{nt} I$ and $B^{m_1} =_{nt} \pm I$ has a solution if and only if $C(m_1, m_2, \pm 1, \pm 1)$ and will imply that $A^{n_1 + n_2} =_{nt} I$ must be solvable along with $A^{n_1 - n_2} =_{nt} -I$ hence $C(n_1 + n_2, n_1 - n_2, 1, -1)$ is satisfied. Therefore we have found exactly the conditions for $\dim{(M_9)_{\Q}} = -\infty, 1$ and shown it has dimension $\le 1$ so the remaning cases are exactly those where the dimension is zero. 
\end{proof}

In the proof we see explicitly that $1$-dimensional components of $M_9$ (resp.\ $M_{10}$) are either due to $\tr{B}$ (resp.\ $\tr{A}$) being unconstrained which occurs only if $m_1 = m_2$ (resp.\ $n_1 = n_2$) and otherwise they arise from $z$ unconstrained which only happens inside the locus $M_1 \cup \cdots \cup M_8$ by Remark~\ref{rmk:constraining_z}. For future convenience we say that $(\ast)_9$ is satisfied if $m_1 \neq m_2$ (resp. $(\ast)_{10}$ if $n_1 \neq n_2$). 

\begin{prop} \label{prop:end_components}
Let $i = 9,10$ and an irreducible component $Z \subset M_i$,
\begin{enumerate}
    \item if $(\ast)_i$ and $\dim{Z_{\Q}} > 0$ then $Z \subset M^{\genirr}(\pi, \SL_2)$ and $Z(\ZZ[\mu_\infty]) \neq \emptyset$
    \item if $\dim{Z_{\Q}} = 0$ and $Z(\ol{\ZZ}) = \emptyset$ then $Z \subset M^{\genirr}(\pi, \SL_2)$ and $Z_{\Q} \subset M^{\irr}(\pi, \SL_2)_{\Q}$.
\end{enumerate}
\end{prop}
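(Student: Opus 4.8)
The plan is to reduce both parts to controlling the element $\red = x^2+y^2+z^2-xyz-4$, using two facts already in hand: that $M^{\genirr}(\pi,\SL_2)$ is the scheme-theoretic image (hence closure) of $M^{\irr}(\pi,\SL_2)_\Q = D(\red)_\Q$, and that by Proposition~\ref{prop:red_irreducible} over a field $\red\neq 0$ is exactly absolute irreducibility. So to place $Z$ inside $M^{\genirr}(\pi,\SL_2)$ I only need $\red$ to be nonzero at the generic point of $Z$, and to obtain the refined conclusions I track whether $\red$ vanishes on all of $Z_\Q$. The whole argument rests on the dichotomy isolated in the proof of the dimension formula for $\wp_9^{\tt e},\wp_{10}^{\tt e}$: on $M_9$ (and symmetrically $M_{10}$) the unique $z$-bearing relation has leading coefficient $c_{n_1}(x)c_{m_1}(y)$, so by Remark~\ref{rmk:constraining_z} the variable $z$ is free exactly when this coefficient vanishes and is otherwise determined by the trace formula. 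I will also repeatedly use that $A^{n_1-n_2}=_{nt}-I$ and $B^{m_1-m_2}=_{nt}I$ force $x,y$ to be roots of some $c_k$, whose roots $2\cos(j\pi/k)$ are cyclotomic integers.

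For part (1) I would first note that nonemptiness of $M_9$ forces $|n_1-n_2|\ge 2$ and $(\ast)_9$ gives $m_1\neq m_2$, so both matrix conditions are nontrivial and pin $x,y$ to finite sets of cyclotomic integers. Consequently the only way a component can have $\dim Z_\Q>0$ is for $z$ to be free, i.e. $Z=\{(x_0,y_0)\}\times\A^1_z$ with $x_0,y_0$ fixed cyclotomic integers. On such a $Z$ the restriction of $\red$ is the monic quadratic $z^2-x_0y_0z+(x_0^2+y_0^2-4)$, which is not identically zero, so it is nonzero at the generic point; this, together with the observation that $\dim Z_\Q>0$ forces $Z$ to dominate $\Spec{\Z}$, gives $Z\subset M^{\genirr}(\pi,\SL_2)$. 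For the integral point I would simply take $z=0$, producing $(x_0,y_0,0)\in Z(\Z[\mu_\infty])$.

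For part (2), with $\dim Z_\Q=0$, the component cannot sit in the $z$-free locus (a finite union of affine lines), so $z$ is determined by the trace formula over the cyclotomic integers $x,y$. The crux is to reinterpret the geometric hypothesis $Z(\ol{\Z})=\emptyset$: since $x,y$ are automatically integral, this says precisely that $z\notin\ol{\Z}$ at every geometric point of $Z$. I then run the integrality argument of Proposition~\ref{prop:integrality_failure} backwards: were $\red=0$ at such a point, then $z$ would satisfy $z^2-xyz+(x^2+y^2-4)=0$, a monic equation over $\ol{\Z}$, forcing $z\in\ol{\Z}$, a contradiction. Hence $\red\neq 0$ on all of $Z_\Q$, so $Z_\Q\subset D(\red)_\Q=M^{\irr}(\pi,\SL_2)_\Q$; and as $\dim Z_\Q=0$ again forces $Z$ horizontal, $Z=\overline{Z_\Q}\subset M^{\genirr}(\pi,\SL_2)$.

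The part I expect to be most delicate is the scheme-theoretic bookkeeping rather than any computation: I must be careful that $\dim Z_\Q\neq-\infty$ rules out vertical (prime-supported) components, so that $Z$ dominates $\Spec{\Z}$, $Z_\Q$ is dense, and the generic point of $Z$ really is a $\Q$-point at which $\red$ can be tested; without this the reduction to (non)vanishing of $\red$ does not close up. The second subtle point, already flagged, is the clean equivalence between the geometric statement $Z(\ol{\Z})=\emptyset$ and the uniform arithmetic statement that $z\notin\ol{\Z}$ at every point of $Z_\Q$, which is exactly what lets the single monic-quadratic integrality argument apply across all the finitely many points of the zero-dimensional fiber at once.
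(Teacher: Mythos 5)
Your proposal is correct and takes essentially the same route as the paper: part (2) is exactly the paper's argument (integrality of $x,y$ from roots of the $c_k$, then $\red$ monic quadratic in $z$ forcing $z\in\ol{\Z}$ if $\red=0$, contradicting $Z(\ol{\Z})=\emptyset$, with density of $Z_{\Q}$ handling the closure step). In part (1) you merely inline what the paper cites — Remark~\ref{rmk:constraining_z} and Lemma~\ref{prop:mid_components_integrality} place $Z$ inside $M_1\cup\cdots\cup M_8$, whose proof is precisely your direct argument on the lines $\{(x_0,y_0)\}\times\A^1_z$ with $z$ free, $\red$ nonvanishing at the generic point, and a cyclotomic-integral choice of $z$ — so there is no substantive difference and no gap.
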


\begin{proof}
Suppose $\dim{Z_{\Q}} > 0$. From our description of $M_i$ and the remark above, $z$ must be unconstrained and by Remark~\ref{rmk:constraining_z} this implies $Z \subset M_1 \cup \cdots \cup M_8 \subset M^{\genirr}(\pi, \SL_2)$ and $Z(\Z[\mu_\infty]) \neq \emptyset$ by Lemma~\ref{prop:mid_components_integrality}. 
\par 
Otherwise, suppose $\dim{Z_{\Q}} = 0$ then $x,y$ are constrained by monic equations so $Z(\ol{\Z}) = \emptyset$ exactly if $z \notin \ol{\Z}$ for any point $(x,y,z) \in Z(\ol{\Q})$. However, in this case $\red(x,y,z) \neq 0$ since $\red$ is a monic quadratic in $z$ over $\Z[x,y]$. Hence if $x,y \in \ol{\Z}$ and $\red(x,y,z) = 0$ then $z \in \ol{\Z}$. Therefore $Z \subset M^{\genirr}(\pi, \SL_2)$ and $Z_{\Q} \subset M^{\irr}(\pi, \SL_2)_{\Q}$.
\end{proof}

These propositions have proven exactly the content of Theorem~\ref{thm:components}.

\begin{example}
It is possible for $M_i^\circ \subset M_i$ to be a strict inclusion. Note that because $M_i^\circ$ is the closure of $M_i \cap D(\red)$ in $M_i$ they can only differ by deleting irreducible components. Indeed for $\pi = \Gamma_2$, it is easy to check that $M^{\genirr}(\Gamma_2, \SL_2) = M_9^\circ = M_{9}$ is zero dimensional, $M_{10}^{\circ} = \emptyset$,  but $M_{10}$ is $1$-dimensional. 
\end{example}

\begin{prop}
Each $(M_i)_{\Q}$, and hence $M^{\genirr}(\pi, \SL_2)_{\Q}$, is reduced. 
\end{prop}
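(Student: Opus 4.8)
The plan is to first reduce the global statement to the componentwise one, and then establish reducedness of each $(M_i)_\Q$ using the squarefreeness of the polynomials $c_n$. For the reduction, recall that $M^{\genirr}(\pi,\SL_2)$ is by definition the scheme-theoretic image of the open subscheme $M^{\irr}(\pi,\SL_2)_\Q \subseteq M_\Q$. The scheme-theoretic image (here the scheme-theoretic closure) of a reduced scheme is reduced, since its structure sheaf injects into the pushforward of a sheaf of reduced rings; so it suffices to show $M^{\irr}(\pi,\SL_2)_\Q$ is reduced. Because reducedness is local and a finite intersection of radical ideals is radical, this follows once I check two things: (a) each $(M_i)_\Q = V(\wp_i^{\tt e})_\Q$ is reduced, and (b) on the irreducible locus $M^{\irr}_\Q$ is the scheme-theoretic union of the $(M_i \cap D(\red))_\Q$, i.e. $I^{\tt e}_\Q[\red^{-1}] = \bigcap_i \wp_i^{\tt e}_\Q[\red^{-1}]$.

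The engine for (a) is the factorization $c_n(t) = \prod_{k=1}^{n-1}\left(t - 2\cos(k\pi/n)\right)$ with distinct roots, so $c_n$ is squarefree in $\Q[t]$; consequently any ideal of the PID $\Q[t]$ containing a nonzero squarefree polynomial is radical. I combine this with two facts over the perfect field $\Q$: a polynomial extension of a reduced ring is reduced, and a tensor product of two reduced $\Q$-algebras is reduced. For $i = 1,\dots,8$ the determinantal generators map to $0$ under $\phi$ (they are exactly \eqref{eq:determinant_condition}), and the surviving generators of $\wp_i^{\tt e}$ lie separately in $\Q[x]$ or $\Q[y]$, with $z$ free. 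Hence $(M_i)_\Q \cong \Spec{(\Q[x]/J_x \ot_\Q \Q[y]/J_y)[z]}$, where $J_x$ contains the squarefree $c_{n_1}(x)$ (resp.\ $c_{n_1+n_2}(x)$ for $i=5,\dots,8$) and $J_y$ contains $c_{m_1+m_2}(y)$ (resp.\ $c_{m_1}(y)$) — or the relevant ideal is zero when the exponent sum vanishes, in which case that factor is a polynomial ring. In all cases both factors are reduced, so the product is reduced.

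The main obstacle is $i = 9,10$, where $z$ genuinely appears. Using
\[ \begin{pmatrix} c_{n_1-n_2} \\ d_{n_1-n_2}+1 \end{pmatrix} = R(x)^{-n_2}\begin{pmatrix} c_{n_1}+c_{n_2} \\ d_{n_1}+d_{n_2}\end{pmatrix}, \qquad R(x)^{-n_2} \in \GL_2(\Q[x]), \]
I identify $J_x = (c_{n_1-n_2}(x),\, d_{n_1-n_2}(x)+1)$ and similarly $J_y = (c_{m_1-m_2}(y),\, d_{m_1-m_2}(y)-1)$, which contain the squarefree $c_{n_1-n_2}, c_{m_1-m_2}$, so $C := \Q[x]/J_x \ot_\Q \Q[y]/J_y$ is reduced. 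The remaining generator is $\bar E = \kappa z + \mu$, linear in $z$ with $\kappa = c_{n_1}(x)c_{m_1}(y)$, and I analyze $C[z]/(\bar E)$ factor by factor after writing the reduced ring $C$ — Artinian, hence a product of fields, except when $m_1 = m_2$ where $C = (\Q[x]/J_x)[y]$ is a product of domains $F_j[y]$. On each factor $\bar E$ is either a genuine degree-one polynomial in $z$ whose content divides the squarefree $c_{m_1}$ and is therefore squarefree (so the quotient is reduced by Gauss's lemma), or it is independent of $z$ and equal to $\pm\,\tr(A^{n_1}B^{m_1})$ specialized to $\pm\, 2T_{m_1}(y/2)$, which is squarefree, or a nonzero constant giving the zero ring. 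In every case the factor is reduced. The delicate case is precisely $m_1 = m_2$ (the one-dimensional components), where $C$ is not Artinian and one must invoke the content/Gauss argument rather than reasoning over fields.

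Finally, step (b) is where I expect the genuine geometric input. Reducedness of each component does not by itself force $I^{\tt e}_\Q[\red^{-1}] = \bigcap_i \wp_i^{\tt e}_\Q[\red^{-1}]$, because pulling back the reduced ambient $\Spec{R/I}$ along the graph of $\phi$ can a priori introduce multiplicities at points where several components meet, and such overlaps do occur inside $D(\red)$. I would settle this either by the same explicit primary-decomposition computation used to produce the $\wp_i$, or by checking that the pairwise intersection loci — which the intersection tables present as again cut out by equations of the form $c_k = 0$, $d_k = \pm 1$, hence squarefree-controlled — meet the larger components transversally, so that no embedded or multiple components arise. This transversality at the overlaps, together with the $z$-equation in components $9$ and $10$, is where essentially all the difficulty concentrates.
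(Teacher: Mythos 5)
Your component-by-component argument is essentially the paper's own proof: the paper's entire justification for this proposition is the single line that it ``follows from the explicit descriptions above and the separability of the polynomials $c_n(t)$,'' which is exactly your step (a), and your execution of it is correct. The identification of the $x$- and $y$-ideals of $\wp_9, \wp_{10}$ via the invertible matrix $R(x)^{-n_2}$ reproduces the paper's rewriting of $\wp_9^{\tt e}$ in the proof of Theorem~\ref{thm:integrality}; the reduction to tensor products of reduced $\Q$-algebras is harmless in characteristic zero; and your treatment of the delicate one-dimensional case $m_1 = m_2$, where $C$ is not Artinian and one needs the content/Gauss's-lemma analysis of the $z$-linear generator over $F_j[y]$ together with squarefreeness of $2T_{m_1}(y/2)$, supplies precisely the detail the paper leaves implicit. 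One minor imprecision: for $\wp_2, \wp_4$ (resp.\ $\wp_6, \wp_8$) with $m_1 + m_2 = 0$ (resp.\ $n_1 + n_2 = 0$) the relevant one-variable ideal is the \emph{unit} ideal rather than zero --- it contains both $c_{m_1}$ and $c_{m_1+1}$, whose gcd is $c_1 = 1$ by Lemma~\ref{lemma:gcd} --- but the quotient is then the zero ring, so your reducedness conclusion is unaffected.

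Your step (b) is a genuine observation rather than a defect relative to the source: since $\Z[x,y,z]/I^{\tt e} \cong R/(I + \ker\phi)$, extension along $\phi$ need not commute with the intersection $I = \bigcap_i \wp_i$, so $I^{\tt e}[\red^{-1}]_{\Q}$ could a priori be strictly smaller than $\bigcap_i \wp_i^{\tt e}[\red^{-1}]_{\Q}$, with the discrepancy (hence possible nilpotents in the ring whose image defines $M^{\genirr}(\pi, \SL_2)_{\Q}$) supported on the overlap loci $V(\wp_i + \wp_j)$, which, as you note, do meet $D(\red)$. The paper's ``hence'' silently treats the decomposition $M^{\genirr}(\pi, \SL_2) = \bigcup_i M_i^\circ$ as scheme-theoretic and offers no argument at this point, so the one step you leave unexecuted is equally unexecuted in the paper; you are, if anything, more careful in isolating it. Your two proposed remedies --- a direct verification that the extended ideal is radical after inverting $\red$, or transversality of the graph of $\phi$ with $V(I)$ along the pairwise intersections (where the failure of extension to commute with intersection is concentrated, by the Mayer--Vietoris sequence for $R/\wp_i \oplus R/\wp_j \to R/(\wp_i + \wp_j)$) --- are the right shape for closing it.
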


\begin{proof}
    This follows from the explicit descriptions above and the separability of the polynomials $c_n(t)$. 
\end{proof}

\begin{proof}[Proof of Theorem~\ref{thm:integrality}]
Let $m_1 + m_2 = \pm 1$ or $n_1 + n_2 = \pm 1$. It is easy to see from our dimension formulas this implies the required emptiness and dimension results. Focusing on $M_9$ notice that it is defined by
\begin{align*} 
\wp^{\tt e}_9 = (&c_{n_1 - n_2}(x), c_{m_1 - m_2}(y), d_{n_1 - n_2}(x) + 1, d_{m_1 - m_2}(y) -1,
\\
& z c_{n_1}(x) c_{m_1}(y) + c_{n_1+1}(x) d_{m_1}(y) + c_{m_1 + 1}(y) d_{n_1}(x))
\end{align*}
Recall that the determinant conditions evaluate to zero identically when the $c,d$ polynomials are substituted. 
This fixes $x = \zeta_{2s}$ and $y = \zeta_t$ where $s = n_1 - n_2$ and $t = m_1 - m_2$. Therefore, plugging in the formulas for $c,d$ of diagonal matrices in terms of eigenvalues, the last equation fixes $z$ to the value in the theorem statement. We must only worry that this point does not land in $M_9^\circ$. Indeed this can happen by the above example. However, if $z$ is not an algebraic integer then $\red(x,y,z) \neq 0$ since $x,y \in \ol{\Z}$ and $\red \in k[x,y][z]$ is monic of degree $2$ in $z$. Hence indeed this point lies in $M_9^\circ$. Since this describes all possible $\ol{\Q}$-points and $\dim{(M_9^\circ)_{\Q}} = 0$ the characteristic zero fiber is covered by the union of these inclusions. The integrality properties then follow by definition. 
\end{proof}

\bibliographystyle{amsalpha}
\bibliography{refs.bib}

\end{document}